\providecommand{\U}[1]{\protect\rule{.1in}{.1in}}
\def\theenumi{\arabic{enumi}}
\def\theenumii{\alph{enumii}}
\def\p@enumii{\theenumi.}
\def\theenumiii{\arabic{enumiii}}
\def\p@enumiii{(\theenumi)(\theenumii)}
\def\p@enumiv{\p@enumiii.\theenumiii}
\theoremstyle{plain}
\newtheorem{theorem}{Theorem}[section]
\newtheorem{lemma}[theorem]{Lemma}
\newtheorem{proposition}[theorem]{Proposition}
\newtheorem{corollary}[theorem]{Corollary}
\newtheorem{conjecture}[theorem]{Conjecture}
\numberwithin{equation}{section}
\theoremstyle{definition}
\newtheorem{definition}[theorem]{Definition}
\newtheorem{remark}[theorem]{Remark}
\newtheorem{thmab}{Theorem}
\renewenvironment{proof}[1][\proofname]{{\bfseries #1\\}}{\qed}
\DeclareMathOperator{\Mod}{-Mod}
\DeclareMathOperator{\module}{-mod}
\DeclareMathOperator{\Hom}{Hom}
\DeclareMathOperator{\End}{End}
\DeclareMathOperator{\Tor}{Tor}
\DeclareMathOperator{\hd}{hd}
\DeclareMathOperator{\cd}{cd}
\DeclareMathOperator{\gd}{gd}
\DeclareMathOperator{\td}{td}
\DeclareMathOperator{\reg}{reg}
\DeclareMathOperator{\coker}{coker}
\DeclareMathOperator{\Ext}{Ext}
\DeclareMathOperator{\Res}{Res}
\DeclareMathOperator{\Ind}{Ind}
\DeclareMathOperator{\depth}{depth}
\DeclareMathOperator{\cdepth}{depth^{class}}
\newcommand{\as}{\text{*}}
\newcommand{\C}{{\mathscr{C}}}
\newcommand{\Q}{\mathbb{Q}}
\newcommand{\FI}{{\mathscr{FI}}}
\newcommand{\mi}{\mathfrak{m}}
\newcommand{\filcom}{\mathcal{C}^\bullet}
\newcommand{\fiHom}{\mathscr{H}om}
\newcommand{\fiExt}{{\mathscr{E}xt}}
\newcommand{\arXiv}[1]{\href{http://arxiv.org/abs/#1}{\nolinkurl{arXiv:#1}}}
\newcommand{\arXivV}[2]{\href{http://arxiv.org/abs/#1}{\nolinkurl{arXiv:#1v#2}}}
\title{Depth and the Local Cohomology of $\FI_G$-modules}
\author{Liping Li and Eric Ramos}
\address{College of Mathematics and Computer Science, Hunan Normal University; Key Laboratory of Performance Computing and Stochastic Information Processing (Hunan Normal University), Ministry of Education; Changsha, Hunan 410081, China.}
\email{lipingli@hunnu.edu.cn; lixxx480@umn.edu.}
\address{Department of Mathematics, University of Wisconsin - Madison.}
\email{eramos@math.wisc.edu}
\thanks{The first author would like to acknowledge the generous support provided by the National Natural Science Foundation of China 11541002, the Construct Program of the Key Discipline in Hunan Province, and the Start-Up Funds of Hunan Normal University 830122-0037. The second author was supported by NSF grant DMS-1502553.}
\begin{document}
\maketitle

\begin{abstract}
In this paper we describe a machinery for homological calculations of representations of $\FI_G$, and use it to develop a local cohomology theory over any commutative Noetherian ring. As an application, we show that the depth introduced by the second author in \cite{R} coincides with a more classical invariant from commutative algebra, and obtain upper bounds of a few important invariants of $\FI_G$-modules in terms of torsion degrees of their local cohomology groups.
\end{abstract}

\section{Introduction}

\subsection{Motivation}

Since the fundamental work of Church, Ellenberg, and Farb in \cite{CEF}, the representation theory of the category $\FI$, whose objects the finite sets $[n] = \{1,\ldots,n\}$ and whose morphisms are injections, has played a central role in Church and Farb's representation stability theory \cite{CF}. The methodology of Church, Ellenberg, and Farb has since been generalized to account for many other well known facts in asymptotic algebra. The main idea behind these generalizations is that stability properties of sequences of group representations can be converted to considerations in the representation theory of certain infinite categories, usually equipped with nice combinatorial structures. This philosophy was carried out by a series of works in this area; see \cite{CEF, CEFN, GL, N, R, SS, SS2, JW}. In this paper, we will be concerned with representations of the category $\FI_G$, where $G$ is a finite group, which was introduced in \cite{SS2}.\\

The homological aspect of the representation theory of $\FI$ and its generalization $\FI_G$ originates from the papers \cite{CEFN}, and \cite{CE} by Church, Ellenberg, Farb and Nagpal. In these papers, the notion of $\FI$-module homology was introduced, as well as the Castelnuovo-Mumford regularity (See Definitions \ref{homdef} and \ref{regdef}). The paper \cite{CE} was the first to provide explicit bounds on this regularity \cite[Theorem A]{CE}. Soon afterward, the techniques of Church and Ellenberg were expanded through two different approaches. One approach was rooted in classical representation theory, and pursued by the first author and Yu in \cite{LY, L2}. The second approach applied certain important ideas in commutative algebra, and was studied by the second author in \cite{R}. Both approaches established a strong relationship between homological properties of $\FI_G$ and its representation stability phenomena, which was accomplished by  obtaining upper bounds on certain homological invariants; see \cite[Theorem 1.3]{L2} and \cite[Theorems C and D]{R}. So far this project is still under active exploration.\\

The goals of this paper are two-fold. Firstly, it is a known fact that most finitely generated $\FI_G$-modules have infinite projective dimension (see \cite[Theorem 1.5]{LY}). Moreover, it can be shown that the category of finitely generated $\FI_G$-modules over an arbitrary Noetherian ring does not usually have sufficiently many injective objects (see Theorem \ref{noinj}). As a result of this, it becomes important for one to develop different machinery for computing certain homological invariants. The first half of this paper is concerned with creating such machinery, which will be built around the homological properties of \emph{$\sharp$-filtered} modules (see Definition \ref{sfildef}). One may think of these modules as acting as both the projective and torsion free injective objects of the category of finitely generated $\FI_G$-modules (See Theorems \ref{charfil} and \ref{homorth})\\

Secondly, we apply the homological machinery developed in the first half of the paper to the concepts of \emph{depth} (See Definition \ref{depth}) and \emph{local cohomology}. For instance, the depth and \emph{classical depth} (See Definition \ref{cdepth}) of an $\FI_G$-module were shown to be equivalent by the second author in \cite[Theorem 4.15]{R} for fields of characteristic 0. It is therefore natural to wonder whether they also coincide for arbitrary commutative Noetherian rings. Moreover, Sam and Snowden described a local cohomology theory for $\FI$ in \cite{SS3} for fields of characteristic 0, so we ask whether a generalized theory for $\FI_G$ can be developed in the much wider framework of commutative Noetherian rings. The main goal of the second half of the paper is to give affirmative answers to the above concerns.\\

\subsection{An inductive method}

An extremely useful combinatorial property of the category $\FI_G$ is that it is equipped with a self-embedding functor, which induces a shift functor $\Sigma$ in the category of $\FI_G$-modules (See definition \ref{shift}). This functor has seen heavy use in the literature. Examples of this include \cite{CEF, CEFN, CE, GL, L, LY, N, R, L2}. The importance of this functor lies in the following key facts: it preserves both left and right projective modules; and for every finitely generated $\FI_G$-module $V$, after applying the shift functor $\Sigma$ enough times, $V$ becomes a $\sharp$-filtered module \cite[Theorem A]{N}.\\

From the perspective of more classically rooted representation theory, the shift functor $\Sigma$ is a kind of restriction. It has a left adjoint functor (called \emph{induction}) and a right adjoint functor (called \emph{coinduction}), denoted by us $L$ and $R$. The coinduction functor $R$ was explicitly constructed in \cite{GL}. In this paper we systematically consider these functors, showing that their behaviors perfectly adapt to $\sharp$-filtered modules. Moreover, the Eckmann-Shapiro lemma holds in the context of $\FI_G$-modules, which allows us to reduce general questions to their simplest cases.\\

\begin{thmab}\label{bigrepthm}
Let $k$ be a commutative Noetherian ring, and let $V_n$ be a finitely generated $kG_n$-module, where $G_n = G \wr \mathfrak{S}_n$. We have:
\begin{enumerate}
\item the functors $\Sigma$, $L$, and $R$ are all exact;
\item the functors $\Sigma$, $L$, and $R$ preserve $\sharp$-filtered modules. In particular (see Definition \ref{sfildef}):
\begin{align*}
\Sigma M(V_n) & \cong M(V_n) \oplus M(\Res ^{G_n} _{G_{n-1}} V_n);\\
R(M(V_n)) & \cong M(V_n) \oplus M(\Ind _{G_n} ^{G_{n+1}} V_n);\\
L(M(V_n)) & \cong M(\Ind ^{G_{n+1}} _{G_n} V_n).\\
\end{align*}
\item For two finitely generated $\FI_G$-modules $V$ and $V'$ and $i \geqslant 0$, one has
\begin{align*}
\Ext^i_{k\FI_G}(L(V),V') & \cong \Ext^i_{k\FI_G}(V,\Sigma V');\\
\Ext^i_{k\FI_G}(\Sigma V,V') & \cong \Ext^i_{k\FI_G}(V,R(V')).
\end{align*}\\
\end{enumerate}
\end{thmab}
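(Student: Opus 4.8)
The plan is to treat the three assertions in order, since each builds on its predecessor. For part (1), I would work directly from the definitions of $\Sigma$, $L$, and $R$ as functors on $\FI_G$-modules. The shift functor $\Sigma$ is defined by precomposition with the self-embedding functor on $\FI_G$; since precomposition with any functor is exact on the level of module categories (kernels and cokernels are computed pointwise, and the self-embedding does not collapse or identify objects), $\Sigma$ is exact. Exactness of $L$ and $R$ does not follow formally from adjunction alone — left adjoints are only right exact and right adjoints only left exact in general — so here I would invoke the explicit descriptions: $R$ is the coinduction functor constructed in \cite{GL}, which at each object is a finite product of restriction-along-a-map operations, hence exact; and $L$ should be computed as a pointwise construction (a direct sum of $kG_n$-modules over a finite indexing set of injections), which is again manifestly exact. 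Alternatively, once one knows $R$ is exact, the adjunction $(\Sigma, R)$ forces $\Sigma$ to send injectives to injectives, but the cleaner route is the direct pointwise argument.

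For part (2), I would first establish the three displayed isomorphisms for the representable (free) modules $M(V_n)$. The key input is the decomposition of the self-embedding functor's effect on Hom-sets: a morphism in $\FI_G$ from $[n]$ into $[m+1]$ either misses the point $m+1$ or hits it, and this dichotomy, together with Mackey-type bookkeeping for the wreath products $G_n = G \wr \mathfrak{S}_n$, yields $\Sigma M(V_n) \cong M(V_n) \oplus M(\Res^{G_n}_{G_{n-1}} V_n)$. The formulas for $R(M(V_n))$ and $L(M(V_n))$ follow either by the same direct combinatorial analysis or by feeding the $\Sigma$-formula through the adjunctions and Yoneda. Having the formulas on free modules, I would pass to $\sharp$-filtered modules: by Definition \ref{sfildef} such a module admits a finite filtration whose graded pieces are of the form $M(V_n)$, and since $\Sigma$, $L$, $R$ are all exact by part (1), they carry such a filtration to a filtration of the same shape with graded pieces given by the displayed formulas — which are themselves $\sharp$-filtered (each $M(W)$ is). Hence the image is $\sharp$-filtered.

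For part (3), the Eckmann–Shapiro isomorphisms, I would argue by $\delta$-functor / dimension shifting. Both sides of each identity are, as functors of $V$ (respectively $V'$), cohomological $\delta$-functors in $i$; by part (1) the shift and (co)induction functors are exact, so composing them with $\Ext$ preserves the long exact sequences. It therefore suffices to check the isomorphism in degree $i = 0$ and to verify that both sides are effaceable (or coeffaceable) in positive degrees. Degree zero is exactly the adjunction $\Hom_{k\FI_G}(L(V), V') \cong \Hom_{k\FI_G}(V, \Sigma V')$ and its mate $\Hom_{k\FI_G}(\Sigma V, V') \cong \Hom_{k\FI_G}(V, R(V'))$. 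For effaceability: in the first identity, resolve $V$ by $\sharp$-filtered (in particular projective) modules — this is legitimate since by \cite[Theorem A]{N} every finitely generated $\FI_G$-module becomes $\sharp$-filtered after enough shifts, and $\sharp$-filtered modules have the requisite $\Ext$-vanishing (Theorems \ref{charfil} and \ref{homorth}) — and observe that $L$ preserves these (part (2)), killing higher $\Ext$ on both sides simultaneously; dually, for the second identity one resolves $V'$ by modules that $R$ preserves and for which $\Ext^{>0}$ against arbitrary modules vanishes.

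The main obstacle I anticipate is part (3), and specifically arranging the effaceability bookkeeping so that the \emph{same} resolution witnesses the vanishing on both sides of a given identity. The subtlety is that $k\FI_G$-modules need not have enough projectives in a form compatible with $L$ and $R$ unless one works with $\sharp$-filtered modules, and the relevant $\Ext$-orthogonality (Theorems \ref{charfil}, \ref{homorth}) has to be strong enough that $\Ext^{>0}_{k\FI_G}(\text{$\sharp$-filtered}, -)$ and $\Ext^{>0}_{k\FI_G}(-, \text{shift of $\sharp$-filtered})$ both vanish. Granting those theorems, the argument is a clean dimension shift; the combinatorial heart of the whole theorem is really the $\Sigma M(V_n)$ decomposition in part (2), whose proof is a careful but routine case analysis on morphisms in $\FI_G$.
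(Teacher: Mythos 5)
Parts (1) and (2) of your proposal track the paper well: exactness of $L$ is verified pointwise from $L(V)_{n+1} \cong \Ind_{G_n}^{G_{n+1}} V_n$ (classical induction over a free extension of group algebras is exact), exactness of $R$ comes from $R(V)_n = \Hom_{k\C}(\Sigma M(n), V)$ with $\Sigma M(n)$ projective, the $\Sigma M(V_n)$ formula is the hit/miss dichotomy you describe (Proposition \ref{genetic}), and $L(M(W)) \cong M(\Ind_{G_n}^{G_{n+1}} W)$ does indeed fall out of the $(L,\Sigma)$ adjunction plus Yoneda, with the filtration argument for general $\sharp$-filtered modules powered by exactness. (The $R(M(W))$ decomposition is slightly more work than ``feeding through the adjunction''; the paper isolates it as Theorem \ref{coind}.)

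For part (3), however, your proposed route is both unnecessary and circular, and it rests on a misreading of what the category lacks. You write that ``$k\FI_G$-modules need not have enough projectives in a form compatible with $L$ and $R$,'' and you propose to resolve by ``$\sharp$-filtered (in particular projective)'' modules. Neither assertion is right: $\C\Mod$ has plenty of projectives (free modules $M(n)$, and more generally $M(W)$ for $W$ a projective $kG_n$-module), and $\sharp$-filtered does \emph{not} imply projective --- $M(W)$ is projective precisely when $W$ is, and this is exactly why most finitely generated $\C$-modules have infinite projective dimension. The scarcity the paper worries about is \emph{injectives}, which is irrelevant here. Worse, invoking Theorems \ref{charfil} and \ref{homorth} to establish effaceability would be circular: those results are proved later using Theorem \ref{equivdepth}, whose proof in turn relies on the Eckmann--Shapiro isomorphisms of part (3). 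The correct argument is the textbook Eckmann--Shapiro proof via ordinary projective resolutions, which is what the paper means by ``proven in the same way in this context'': take a projective resolution $P_\bullet \to V$; since $\Sigma$ is exact, $L$ preserves projectives, so $L(P_\bullet) \to L(V)$ is a projective resolution and the first isomorphism follows by applying the adjunction $\Hom_{k\C}(L(-), V') \cong \Hom_{k\C}(-, \Sigma V')$ termwise; since $R$ is exact, $\Sigma$ preserves projectives, and the second isomorphism follows identically from $\Hom_{k\C}(\Sigma(-), V') \cong \Hom_{k\C}(-, R(V'))$. No $\delta$-functor machinery, no $\sharp$-filtered resolutions, and no appeal to later theorems is needed.
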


As an example of these induction style arguments, we prove the aforementioned equivalence of depth and classical depth. If $V$ is an $\FI_G$-module, there is always a natural map $V \rightarrow \Sigma V$ (See Definition \ref{dervdef} for the explicit definition). The cokernel of this map defines a functor, which we call the \emph{derivative} $DV$. The derivative functor was first introduced in \cite{CE}, and was later used by Yu and the authors in \cite{LY}, and \cite{R}. The paper \cite{R} used the derived functors of the derivative and its iterates, $H_i^{D^a}(\bullet)$, to develop a theory of depth for $\FI_G$-modules. It was also noted in that paper that there was a more classically rooted definition of depth called the classical depth of the module. Using Theorem \ref{bigrepthm} we will be able prove the following.\\

\begin{thmab}\label{equivdepth}
Let $V$ be a finitely generated $\FI_G$-module over a Noetherian ring $k$. Then the depth and classical depth of $V$ coincide. That is:
\begin{equation*}
\inf\{a \mid H_1^{D^{a+1}}(V) \neq 0\} = \inf\{i \mid \Ext^i_{\C\Mod}(k\C/\mi,V) \neq 0\}.
\end{equation*}
\text{}\\
\end{thmab}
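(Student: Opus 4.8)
The plan is to distill from each side of the equality a recursion in the \emph{derivative} functor $D$ and then to check that the two invariants obey the \emph{same} recursion. Write $\iota_V\colon V\to\Sigma V$ for the natural map of Definition~\ref{dervdef}, so $DV=\coker(\iota_V)$, and set $T:=k\C/\mi=\bigoplus_{n\ge 0}C_n$, where $C_n$ is the $\FI_G$-module equal to the regular module $kG_n$ in degree $n$ and zero elsewhere, with every non-invertible morphism acting by $0$. I would begin with two base-case identifications. A homomorphism $C_n\to V$ is an element of $V_n$ killed by $\mi$; since every non-invertible $\FI_G$-morphism is an isomorphism times a power of the one-point inclusion and the kernels of those powers form an increasing chain, being killed by $\mi$ is the same as lying in $\ker(\iota_V)$, so $\Hom_{\C\Mod}(T,V)$ vanishes precisely when $\ker(\iota_V)=0$. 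On the other side, Theorem~\ref{bigrepthm} gives that $\Sigma$ is exact and that $\Sigma M(V_n)\cong M(V_n)\oplus M(\Res^{G_n}_{G_{n-1}}V_n)$ with $\iota$ the inclusion of the first summand; hence for any projective $P$ the map $\iota_P$ is a split monomorphism whose cokernel $DP$ is again projective. Resolving $V$ by projectives $P_\bullet$ and taking homology of the degreewise-split short exact sequence of complexes $0\to P_\bullet\to\Sigma P_\bullet\to DP_\bullet\to 0$ then yields $H_1^{D}(V)=\ker(\iota_V)$, $H_0^{D}(V)=DV$, and $H_i^{D}(V)=0$ for $i\ge 2$. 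In particular both sides of the theorem equal $0$ exactly when $\ker(\iota_V)\ne0$.

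Next, assume $\ker(\iota_V)=0$, so that $0\to V\xrightarrow{\iota_V}\Sigma V\to DV\to 0$ is exact, and prove the recursive step for each side. For the left-hand side, the vanishing $H_i^{D}(V)=0$ for $i\ge 1$ forces the Grothendieck spectral sequence of the composite $D^{a+1}=D^{a}\circ D$ (legitimate because $D$ sends projectives to projectives, hence to $D^{a}$-acyclics) to degenerate, giving natural isomorphisms $H_{p}^{D^{a+1}}(V)\cong H_{p}^{D^{a}}(DV)$ for all $p,a$; taking $p=1$ and varying $a$ shows $\inf\{a\mid H_1^{D^{a+1}}(V)\ne0\}=\inf\{a\mid H_1^{D^{a+1}}(DV)\ne0\}+1$. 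For the right-hand side, Theorem~\ref{bigrepthm}(3) gives $\Ext^i_{\C\Mod}(T,\Sigma V)\cong\Ext^i_{\C\Mod}(LT,V)$, and a short computation (either from $L\,M(V_n)\cong M(\Ind^{G_{n+1}}_{G_n}V_n)$ together with a presentation of $C_n$, or by Yoneda from $\Hom(LC_n,-)\cong\Hom(C_n,\Sigma(-))\cong\Hom(C_{n+1},-)$) shows $LC_n\cong C_{n+1}$, so $LT\cong\bigoplus_{n\ge1}C_n$ is a direct summand of $T$; consequently $\Ext^i_{\C\Mod}(T,\Sigma V)$ is a direct summand of $\Ext^i_{\C\Mod}(T,V)$ and in particular can be nonzero only in degrees where the latter is. Furthermore the map $\Ext^\bullet_{\C\Mod}(T,V)\to\Ext^\bullet_{\C\Mod}(T,\Sigma V)$ induced by $\iota_V$ is \emph{zero}: as $V$ varies it is a morphism of $\delta$-functors (the target is a $\delta$-functor since $\Sigma$ is exact), its source is the universal $\delta$-functor extending $\Hom_{\C\Mod}(T,-)$, so it is determined by its degree-zero component $\phi\mapsto\iota_V\circ\phi$, which is $0$ because $\mathrm{im}(\phi)\subseteq\ker(\iota_V)$. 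Putting these into the long exact sequence of $0\to V\to\Sigma V\to DV\to 0$ collapses it to short exact sequences $0\to\Ext^i_{\C\Mod}(T,\Sigma V)\to\Ext^i_{\C\Mod}(T,DV)\to\Ext^{i+1}_{\C\Mod}(T,V)\to 0$, whence $\inf\{i\mid\Ext^i_{\C\Mod}(T,DV)\ne0\}=\inf\{i\mid\Ext^i_{\C\Mod}(T,V)\ne0\}-1$. So, writing $f$ for either invariant, $f(V)=0$ if $\ker(\iota_V)\ne0$ and $f(V)=f(DV)+1$ if $\ker(\iota_V)=0$.

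Finally, iterating the recursion along $V,DV,D^2V,\dots$ (each again finitely generated, $\FI_G$ being Noetherian): if $\ker(\iota_{D^aV})\ne0$ for some $a$ then both invariants equal the least such $a$, and otherwise both are $+\infty$; either way they coincide, which is the theorem. The step I expect to be the main obstacle is the vanishing in positive degrees of $\Ext^\bullet_{\C\Mod}(T,V)\to\Ext^\bullet_{\C\Mod}(T,\Sigma V)$: a degreewise naturality argument does not suffice, and one genuinely needs the universality of $\Ext^\bullet$ as a $\delta$-functor; this, together with the identification of $LT$ as a direct summand of $T$, is where Theorem~\ref{bigrepthm} and the combinatorics of the irrelevant ideal $\mi$ carry the argument. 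A secondary point to handle carefully is the degeneration of the Grothendieck spectral sequence and the termination of the recursion in the infinite-depth case. One could instead route the whole proof through the local cohomology $H^i_\mi$, showing both sides equal $\inf\{i\mid H^i_\mi(V)\ne0\}$, but the recursion above avoids that machinery.
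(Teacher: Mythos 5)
Your proof is correct, but it takes a genuinely different route from the paper's. The paper proves the theorem by first reducing to the claim that $\sharp$-filtered modules have infinite classical depth (invoking the techniques of \cite[Theorem 4.17]{R}), then uses $L(kG_s)\cong kG_{s+1}$ and Eckmann--Shapiro to reduce to showing $\Ext^i_{k\C}(kG_0,M(W))=0$ for all $i$, and finally proves this vanishing by an induction on $i$ driven by the \emph{coinduction} functor: the split inclusion $M(W)\hookrightarrow R(M(W))$ and the adjunction $\Ext^i(J_0,R(M(W)))\cong\Ext^i(\Sigma J_0,M(W))$ give the inductive step. Your argument instead avoids the reduction to $\sharp$-filtered modules altogether and works directly with the \emph{derivative}: you establish the identical recursion $f(V)=0$ if $\ker(\iota_V)\ne0$ and $f(V)=f(DV)+1$ otherwise for both invariants, by (a) a Grothendieck spectral-sequence degeneration for $D^{a+1}=D^a\circ D$ on the depth side, and (b) the observation that $\Ext^\bullet(k\C/\mi,\iota_V)$ vanishes identically as a morphism of $\delta$-functors, together with the Eckmann--Shapiro identification $\Ext^i(T,\Sigma V)\cong\Ext^i(LT,V)$ with $LT$ a summand of $T$, on the classical-depth side. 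The two proofs thus use opposite adjoints of $\Sigma$: the paper leans on $R$, you lean on $L$ and $D$. Your approach has the conceptual advantage of producing a common recursion, which explains \emph{why} the two depths coincide, and it makes the connection with the torsion functor and local cohomology (Theorem~\ref{localcohomologybounds}(1)) almost immediate; the paper's approach is more computational but has the advantage of yielding, as a byproduct, the explicit vanishing $\Ext^i_{k\C}(kG_s,V)=0$ for all $\sharp$-filtered $V$, which it then reuses throughout Sections 4--5. Two points in your argument deserve a word of care, and you flag them yourself correctly: the $\delta$-functor universality must be invoked over the big category $\C\Mod$ (which has enough injectives) rather than $\C\module$ (which by Theorem~\ref{noinj} may not), and the base case $\Hom(T,V)=0\iff\ker(\iota_V)=0$ relies on the fact that every non-invertible morphism of $\C$ factors through a one-step inclusion, which is the content of the paper's torsion lemma in Section~4.2.
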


\subsection{A machinery for homological calculations}

In practice, to compute the homology (resp. cohomology) groups of representations of a ring, one usually approximates these representations by suitable projective (resp. injective) resolutions. However, we have already discussed that this strategy does not work very well in the context of $\FI_G$-modules. To overcome this obstacle, one has to find suitable objects which satisfy the following two requirements: they must be acyclic with respect to certain important functors; and finitely generated $\FI_G$-modules must be approximated by finite complexes of such objects.\\

The work of Nagpal, Yu, and the authors in \cite[Theorem A]{N}, \cite[Theorem 1.3]{LY}, and \cite[Theorem B]{R}, strongly suggest that $\sharp$-filtered modules are best candidates. These objects were introduced as modules which were ``almost'' projective, and were later proven to be acyclic with respect to many natural right exact functors. One of the major realizations of this paper is that $\sharp$-filtered objects are also acyclic with respect to many left exact functors. In particular, the following results convince us that $\sharp$-filtered modules do play the role of both projective objects and injective objects for many homological computations.\\

\begin{thmab}[Homological characterizations of $\sharp$-filtered modules]\label{charfil}
Let $k$ be a commutative Noetherian ring, and let $V$ be a finitely generated $\FI_G$-module over $k$. Denote the endomorphism group of object $[s]$ in $\FI_G$ by $G_s$, $s \geqslant 0$. Then $V$ is $\sharp$-filtered if and only if it satisfies one of the following equivalent conditions:
\begin{enumerate}
\item $\Tor_i^{k\FI_G} (kG_s, V) = 0$ for $i \geqslant 1$ and $s \geqslant 0$;
\item $\Tor_1^{k\FI_G} (kG_s, V) = 0$ for $s \geqslant 0$;
\item $\Tor_i^{k\FI_G} (kG_s, V) = 0$ for $s \geqslant 0$ and a certain $i \geqslant 1$;
\item $\Ext^i_{k\FI_G} (kG_s, V) = 0$ for $i, s \geqslant 0$;
\item $\Ext^i_{k\FI_G} (T, V) = 0$ for $i \geqslant 0$ and all finitely generated torsion modules (See Definition \ref{torsion}).\\
\end{enumerate}
\end{thmab}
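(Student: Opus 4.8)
The plan is to establish the equivalences by a cyclic chain of implications, using known facts about $\sharp$-filtered modules as the anchor. The obvious implications are $(1)\Rightarrow(2)$, $(1)\Rightarrow(3)$, $(4)\Rightarrow(1)$ restricted to $\Tor$ (via the standard change-of-rings/duality relating $\Ext^i(kG_s,V)$ and $\Tor_i(kG_s,V)$ over the group algebra $kG_s$, which is self-injective as a finite-dimensional Hopf algebra — though one must be careful over a general Noetherian $k$; see below), and $(5)\Rightarrow(4)$ since each $kG_s\cong k\FI_G/\mi$ in degree $s$ is a finitely generated torsion module. So the real content is: (a) that a $\sharp$-filtered module satisfies $(1)$ and $(5)$; and (b) the reverse direction, that the weak vanishing conditions $(2)$ or $(3)$ force $V$ to be $\sharp$-filtered. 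I would organize the proof around proving ``$\sharp$-filtered $\Rightarrow (1)$ and $(5)$'', then ``$(2)$ or $(3)$ $\Rightarrow$ $\sharp$-filtered'', and then closing the loop.

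For the forward direction, recall from the excerpt (Theorem \ref{bigrepthm}(2)) that $\Sigma$, $L$, $R$ are exact and preserve $\sharp$-filtered modules, with explicit formulas; and a $\sharp$-filtered module is by definition filtered with graded pieces of the form $M(W)$ for $kG_s$-modules $W$. By the long exact sequences in $\Tor$ and $\Ext$ and the two-out-of-three property, it suffices to check $(1)$ and $(5)$ on the building blocks $M(W)$. For $\Tor_i^{k\FI_G}(kG_s,M(W))$ I would use the adjunction/induction description of $M(W)$ — it is induced from $kG_s\module$ along the inclusion functor, so by an Eckmann--Shapiro / base-change argument (the $\FI_G$-analogue mentioned in the introduction) the higher $\Tor$ against the ``evaluation'' module $kG_s$ vanishes, leaving only the degree-$s$ part in homological degree $0$. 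The $\Ext$ vanishing in $(4)$ and then $(5)$ is the genuinely new ``injective-like'' half: here I would exploit the adjunctions in Theorem \ref{bigrepthm}(3) to rewrite $\Ext^i_{k\FI_G}(kG_s, M(W))$ in terms of $\Ext$ groups that visibly vanish, and then bootstrap from $kG_s$ to arbitrary finitely generated torsion $T$ by inducting on the torsion length, using that every such $T$ is built by extensions from the $kG_s$'s (this is where Definition \ref{torsion} and finite generation over the Noetherian ring are used to guarantee the filtration is finite).

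For the reverse direction — which I expect to be the main obstacle — the goal is to show that $\Tor_1^{k\FI_G}(kG_s,V)=0$ for all $s$ (or a single fixed $i\geqslant 1$, as in $(3)$) already implies $V$ is $\sharp$-filtered. The natural strategy is induction on the number of times one must shift: by \cite[Theorem A]{N}, $\Sigma^N V$ is $\sharp$-filtered for $N\gg 0$, so one wants to ``descend'' this property. The key lemma should be that $V$ is $\sharp$-filtered if and only if $\Sigma V$ is $\sharp$-filtered \emph{and} some torsion/degree-zero obstruction vanishes — and that obstruction is exactly measured by $\Tor_1^{k\FI_G}(kG_s,V)$ (or, dually, by the natural map $V\to\Sigma V$ being injective with $\sharp$-filtered cokernel). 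Concretely: form the exact sequence relating $V$, $\Sigma V$, and the derivative-type pieces; since $\Sigma V$ is eventually $\sharp$-filtered, run the $\Tor(kG_s,-)$ long exact sequence, and use the hypothesis to conclude first that $V$ has no torsion and then that it is genuinely $\sharp$-filtered, decreasing $N$ by one at each step. The subtle points I would flag are: making the single-$i$ case $(3)$ work (this likely needs a dimension-shifting argument, comparing $\Tor_i$ and $\Tor_1$ via an auxiliary resolution, which is why I'd prove $(2)\Rightarrow\sharp\text{-filtered}$ first and then reduce $(3)$ to $(2)$); and handling the group algebras $kG_s$ over a general Noetherian $k$, where $kG_s$ need not be semisimple, so one cannot simply split things off — instead one argues with the whole regular representation $kG_s$ and its self-duality as a $k$-algebra, which holds integrally since $G_s$ is finite and $kG_s$ is a symmetric $k$-algebra.
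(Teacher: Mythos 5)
There is a genuine gap in the proposed chain of implications. You rely on ``$(4)\Rightarrow(1)$ restricted to $\Tor$'' via a Tor--Ext duality for the group algebra $kG_s$, and this step does not work. The groups $\Tor_i^{k\C}(kG_s,V)$ and $\Ext^i_{k\C}(kG_s,V)$ are computed over the category algebra $k\C=k\FI_G$, not over $kG_s$. The self-injectivity (or symmetry) of the finite group algebra $kG_s$ gives a duality between $\Tor_*^{kG_s}$ and $\Ext^*_{kG_s}$, but says nothing about $\Tor_*^{k\C}$ versus $\Ext^*_{k\C}$: the ring $k\C$ is not self-injective, not Frobenius, and a right projective resolution of $kG_s$ as a right $k\C$-module is not linked to a left projective resolution of $kG_s$ as a left $k\C$-module by any universal-coefficient or duality mechanism. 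Once this arrow is removed, your cycle no longer closes for conditions $(4)$ and $(5)$: you have shown (or sketched) that $\sharp$-filtered $\Rightarrow (4),(5)$ and that $(2),(3)\Rightarrow\sharp$-filtered, but you have no path from $(4)$ or $(5)$ back to $\sharp$-filtered.

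The paper closes this gap very differently. It proves Theorem~\ref{equivdepth} (depth equals classical depth) by an induction that reduces $\Ext^i_{k\C}(kG_s,V)$ to $\Ext^i_{k\C}(kG_0,V)$ using $L(kG_s)\cong kG_{s+1}$ and the Eckmann--Shapiro adjunction $\Ext^i(L(V),V')\cong\Ext^i(V,\Sigma V')$, and then handles the base case $kG_0$ using the split sequence $0\to V\to RV\to V'\to 0$ from Theorem~\ref{coind} together with the short exact sequence $0\to J_0\to M(0)\to kG_0\to 0$. Combined with Theorem~\ref{depthclass} (which says $\depth(V)=\infty$ iff $V$ is $\sharp$-filtered), this establishes directly that condition $(4)$ is equivalent to being $\sharp$-filtered, with no detour through the $\Tor$ conditions. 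Condition $(5)$ then follows by induction on torsion degree (as in the proof of Theorem~\ref{orthorel}(1)), once $(4)$ is in hand. Your forward-direction sketch for $\sharp$-filtered $\Rightarrow(4),(5)$ using the adjunctions of Theorem~\ref{bigrepthm} is in the right spirit, and conditions $(1)$--$(3)$ are just Theorem~\ref{homacyclic}, which the paper simply cites rather than reproves; but to complete the argument you need a substitute for the invalid $(4)\Rightarrow(1)$ arrow, and the depth-theoretic route is the mechanism the paper supplies.
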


\begin{remark}
The first three homological characterizations of $\sharp$-filtered modules are not new. They have been described in \cite[Theorem 1.3]{LY} and \cite[Theorem B]{R}.\\

The Tor functors in the above theorem are strongly related the the notion of $\FI_G$-module homology, which we will formally define later. We state the above theorem using the language of Tor so that the relationship between the five given statements is more clear.\\
\end{remark}

\begin{thmab}[Homological orthogonal relations] \label{homorth}
Let $k$ be a commutative Noetherian ring, and let $V$ be a finitely generated $\FI_G$-module over $k$. Then
\begin{enumerate}
\item $T$ is a torsion module if and only if $\Ext _{k\FI_G}^i (T, V) = 0$ for all $\sharp$-filtered modules $V$ and all $i \geqslant 0$.
\item $V$ is an injective module if and only if $\Ext _{k\FI_G}^1 (W, V) = 0$ whenever $W$ is a $\sharp$-filtered module or $W$ is a finitely generated torsion module.
\item $\Ext _{k\FI_G}^i (V, F) = 0$ for all $\sharp$-filtered modules $F$ and all $i \geqslant 1$ if and only if $\Sigma_N V$ is a projective module for $N \gg 0$.
\item $\Ext _{k\FI_G}^i (T, V) = 0$ for $i \geqslant 1$ and all finitely generated torsion modules $T$ if and only if $V$ is a direct sum of an injective torsion module and a $\sharp$-filtered module.\\
\end{enumerate}
\end{thmab}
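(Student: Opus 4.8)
The plan is to prove the four statements in the order (1), (3), (4), (2), feeding into a dimension-shifting machine the characterizations of $\sharp$-filtered modules in Theorem~\ref{charfil}, the exactness, preservation and Eckmann--Shapiro properties of Theorem~\ref{bigrepthm}, Nagpal's theorem \cite{N} that $\Sigma_N V$ is $\sharp$-filtered for $N \gg 0$ when $V$ is finitely generated, and the short exact sequence $0 \to H^0_\mi(V) \to V \to \bar V \to 0$ exhibiting $V$ as an extension of its torsion-free quotient $\bar V$ by its (finitely generated, since $k\FI_G$ is Noetherian) torsion submodule $H^0_\mi(V)$. Two elementary facts are used repeatedly. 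First, if $W$ is torsion-free then the natural map $W \to \Sigma W$ is injective: an element killed by one injection $[n]\hookrightarrow[n+1]$ is killed by all of them by $G_{n+1}$-equivariance, hence lies in $H^0_\mi(W)$; so $0 \to W \to \Sigma_N W \to Q \to 0$ is exact with $\Sigma_N W$ $\sharp$-filtered. Second, the class of $\sharp$-filtered modules is closed under direct summands (immediate from Theorem~\ref{charfil}(1)) and under syzygies (lift a $\sharp$-filtration of $F$ to a projective resolution built from $kG_{d_i}$-projective resolutions of the graded pieces $M(W_i)$). I will also use the adjunction isomorphism $\Ext^j_{k\FI_G}(M(W), N) \cong \Ext^j_{kG_d}(W, N_d)$ for a $kG_d$-module $W$.

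For item (1), the forward direction is the ``only if'' half of Theorem~\ref{charfil}(5), applied to each $\sharp$-filtered module. For the converse I argue contrapositively: if a finitely generated $T$ is not torsion then $\bar T = T/H^0_\mi(T) \neq 0$, and for $N \gg 0$ the natural map $\phi\colon T \to \Sigma_N T$ has kernel exactly $H^0_\mi(T)$ (an element killed by $\phi$ is killed by every morphism out of its degree), so $\phi$ is a nonzero element of $\Hom_{k\FI_G}(T, \Sigma_N T)$ with $\Sigma_N T$ $\sharp$-filtered, and the hypothesis fails already at $i = 0$.

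For item (3), the forward direction goes as follows: Nagpal's theorem together with the Eckmann--Shapiro isomorphisms of Theorem~\ref{bigrepthm}(3) and the fact that $R$ preserves $\sharp$-filtered modules let us replace $V$ by $\Sigma_N V$ and assume $V$ is $\sharp$-filtered; then $\Omega V$ is $\sharp$-filtered, so $\Ext^1_{k\FI_G}(V, \Omega V) = 0$ by hypothesis, so $0 \to \Omega V \to P \to V \to 0$ splits and $V$ is projective. For the converse, suppose $\Sigma_N V$ is projective; for $\sharp$-filtered $F$, naturality of $\phi_N\colon \mathrm{id} \Rightarrow \Sigma_N$ shows that the composite $\Ext^1_{k\FI_G}(V, F) \xrightarrow{\Sigma_N} \Ext^1_{k\FI_G}(\Sigma_N V, \Sigma_N F) \to \Ext^1_{k\FI_G}(V, \Sigma_N F)$ coincides with the map induced by the split monomorphism $F \hookrightarrow \Sigma_N F$, hence is injective, hence $\Ext^1_{k\FI_G}(V, F) \hookrightarrow \Ext^1_{k\FI_G}(\Sigma_N V, \Sigma_N F) = 0$; applying this to the syzygies of $V$ (whose shifts remain projective) kills $\Ext^i$ for all $i \geq 1$.

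For items (4) and (2): the ``if'' direction of (4) is immediate, since $\Ext^i_{k\FI_G}(T, -)$ with $i \geq 1$ kills injectives and, for all $i \geq 0$, kills $\sharp$-filtered modules by Theorem~\ref{charfil}(5). For the ``only if'' direction put $E = H^0_\mi(V)$; since $\Hom_{k\FI_G}(T, \bar V) = 0$ for torsion $T$, the long exact sequence of $0 \to E \to V \to \bar V \to 0$ gives $\Ext^1_{k\FI_G}(T, E) = 0$ for every finitely generated torsion $T$, so by Baer's criterion in the (locally Noetherian) torsion subcategory --- where $\Ext^1$ agrees with $\Ext^1_{k\FI_G}$ --- $E$ is injective among torsion modules; granting the structural fact that the $\mi$-torsion subcategory is closed under injective hulls (equivalently, that such an $E$ is already injective in $k\FI_G\Mod$), $E$ splits off, so $\bar V$ is a finitely generated torsion-free summand of $V$ with $\Ext^i_{k\FI_G}(T, \bar V) = 0$ for $i \geq 1$, hence $\sharp$-filtered by Theorem~\ref{charfil}(5). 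Finally, for the ``$\Leftarrow$'' of (2): Baer's criterion reduces injectivity of $V$ to $\Ext^1_{k\FI_G}(X, V) = 0$ for finitely generated $X$, and the torsion sequence together with the torsion hypothesis reduces this to finitely generated torsion-free $\bar X$; using closure of $\sharp$-filtered modules under syzygies one first upgrades the hypothesis to $\Ext^i_{k\FI_G}(F, V) = 0$ for all $\sharp$-filtered $F$ and all $i \geq 1$, splits off $H^0_\mi(V)$ as in (4), and then resolves $\bar X$ by a finite complex of $\sharp$-filtered modules (obtained by iterating the shift sequences above, which terminates by the regularity and finiteness bounds of \cite{CE, LY, R}); a telescoping chain of connecting maps then forces $\Ext^1_{k\FI_G}(\bar X, V) = 0$. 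I expect the real obstacles to be precisely the two structural inputs invoked here: that a torsion module with no finitely generated torsion extensions mapping into it is genuinely injective (closure of the torsion subcategory under injective hulls), and the existence of finite $\sharp$-filtered coresolutions of torsion-free modules needed to run the dimension shift in (2).
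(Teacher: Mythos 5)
Your plan tracks the paper's own proof quite closely, and the ingredients you marshal are exactly the ones the paper uses; I'll note where they coincide, where you deviate, and where the genuine subtleties lie.

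For part (1), your contrapositive argument in the converse direction (that a nontorsion $T$ admits a nonzero map $\phi\colon T \to \Sigma_N T$ into a $\sharp$-filtered target) is essentially the paper's argument. For the forward direction you invoke Theorem~\ref{charfil}(5), but note that the paper \emph{derives} \ref{charfil}(5) from this very statement (via Theorem~\ref{orthorel}(1) and an induction on torsion degree reducing to $kG_s$ plus Theorem~\ref{equivdepth}); so as written the citation is circular, though the underlying argument is available to you once you replace it with the direct reduction to $\Ext^i(kG_s, V) = 0$. For part (3), your argument is correct but genuinely different in flavor from the paper's: you reduce to $\Sigma_N V$ via Eckmann--Shapiro plus preservation of $\sharp$-filtered modules under $R$, then split the syzygy sequence of the $\sharp$-filtered module $\tilde V = \Sigma_N V$ using $\Ext^1(\tilde V, \Omega \tilde V) = 0$; and for the converse you use naturality of $V \to \Sigma_N V$ together with the split embedding $F \hookrightarrow \Sigma_N F$. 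The paper instead argues by contradiction, explicitly constructing a $\sharp$-filtered $F$ with $\Ext^1(\tilde V, F) \neq 0$ when $\tilde V$ is not projective and transporting it back through coinduction, and uses the coinduction adjunction directly in the other direction. Both are valid; yours is cleaner in the forward direction and the paper's is cleaner in the converse.

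The two structural inputs you honestly flag at the end are indeed load-bearing, and both are supplied in the paper: the closure of the torsion subcategory under injective hulls (a finitely supported torsion module $E$ with $\Ext^1_{k\C}(T, E) = 0$ for all finitely generated torsion $T$ is genuinely injective in $k\C\Mod$) is taken from \cite[Section 2.4]{GL}, and the finite $\sharp$-filtered coresolution of an arbitrary finitely generated module is exactly Theorem~\ref{filcombounds} of the paper, quoted from \cite[Theorem~1.3]{L2}. Your proof of (4) is otherwise the paper's proof verbatim. For (2) there is a residual subtlety in the telescoping step that you gesture at but do not resolve: walking down the short exact sequences $0 \to W^{(s)}_F \to F^s \to W^{(s+1)} \to 0$ produces $\Ext^j$ groups of increasing degree $j$ applied to the torsion pieces $W^{(s)}_T$, whereas the hypothesis only gives $\Ext^1(T, V) = 0$ for torsion $T$. (The paper's ``one recursively deduces that $\Ext^i(W^{(s)}, V) = 0$ for all $i \geqslant 0$'' is also compressed here and needs the same care.) Being explicit about how the $\Ext^j(T, V)$ for $j \geqslant 2$ are controlled --- e.g.\ by first splitting off the injective torsion part $V_T$ and then establishing that the torsion-free complement is $\sharp$-filtered, so that Theorem~\ref{charfil}(5) kills all higher $\Ext$s of torsion modules against it --- is the one place where your sketch, like the paper's, would benefit from more detail.
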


This should convince the reader that $\sharp$-filtered objects are acyclic with respect to many natural left and right exact functors. To prove that $\sharp$-filtered objects can be used to approximate arbitrary modules, we call upon the following theorem.\\

\begin{theorem}[\cite{L2}, Theorem 1.3] \label{filcombounds}
Let $k$ be a commutative Noetherian ring, and let $V$ be a finitely generated $\FI_G$-module over $k$. Then there is a complex
\[
\mathcal{C^\bullet}V : 0 \rightarrow V \rightarrow F^0 \rightarrow \ldots \rightarrow F^{n} \rightarrow 0
\]
enjoying the following properties:
\begin{enumerate}
\item every $F^i$ is a $\sharp$-filtered module;
\item $\gd(F^i) \leq \gd(V) - i$ (see Definition \ref{regdef}). Therefore, $n \leq \gd(V)$;
\item the cohomologies $H^i(\mathcal{C}^\bullet V)$ are finitely supported (See Definition \ref{regdef}), and
\[
\begin{cases} \td(H^{i}(V)) = \td(V) &\text{ if $i = -1$}\\ \td(H^{i}(V)) \leq 2\gd(V) - 2i  - 2 &\text{ if $0 \leq i \leq n$.}\end{cases}
\]\\
\end{enumerate}
\end{theorem}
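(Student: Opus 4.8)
The plan is to construct $\filcom V$ by induction on the generation degree $\gd(V)$, with the shift functor as the engine; this is the approach of \cite{L2}. Write $V_T \subseteq V$ for the torsion submodule and $U := V/V_T$, so $V_T$ is finitely supported and $U$ is torsion free. If $U = 0$ (that is, $V$ is finitely supported), take $\filcom V \colon 0 \to V \to 0$. Otherwise, by \cite[Theorem A]{N} there is an $N \gg 0$ with $\Sigma^N U$ $\sharp$-filtered; put $F^0 := \Sigma^N U$, and let $V \to F^0$ be the composite $V \twoheadrightarrow U \hookrightarrow \Sigma^N U$, the second arrow being the $N$-fold iterate of the canonical natural transformation $\mathrm{id} \to \Sigma$ (injective because $U$ is torsion free). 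This map has kernel $V_T$, and we set $W := \coker(V \to F^0) = \coker(U \to \Sigma^N U)$. Applying the inductive hypothesis to $W$ and splicing $\filcom W$ onto $0 \to V \to F^0$ produces $\filcom V$: one takes $F^i := F^{i-1}_W$ for $i \geqslant 1$, and a short diagram chase identifies $H^{-1}(\filcom V) = V_T$, $H^0(\filcom V) \cong W_T$ (the torsion submodule of $W$), and $H^i(\filcom V) \cong H^{i-1}(\filcom W)$ for $i \geqslant 1$.

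Generation degree is tracked by the derivative functor $D$. Factoring $U \to \Sigma U \to \cdots \to \Sigma^N U$ and using that the cokernel of each $\Sigma^j U \to \Sigma^{j+1} U$ is $\Sigma^j(DU)$, one sees that $W$ is an iterated extension of the modules $\Sigma^j(DU)$, $0 \leqslant j \leqslant N-1$, the extensions being honest short exact sequences since $U$, hence each $\Sigma^j U$, is torsion free; as $\Sigma$ does not raise generation degree, $\gd(W) \leqslant \gd(DU)$. Now $D$ is right exact ($\Sigma$ being exact, by Theorem \ref{bigrepthm}), so applying $D$ to a surjection onto $U$ from a finitely generated free module of generation degree $\gd(U)$, combined with $DM(V_n) \cong M(\Res^{G_n}_{G_{n-1}} V_n)$ (which follows from $\Sigma M(V_n) \cong M(V_n) \oplus M(\Res^{G_n}_{G_{n-1}} V_n)$), yields $\gd(DU) \leqslant \gd(U) - 1 \leqslant \gd(V) - 1$. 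Hence $\gd(W) \leqslant \gd(V) - 1$: the recursion terminates, the length satisfies $n \leqslant \gd(V)$, $\gd(F^0) = \gd(\Sigma^N U) \leqslant \gd(V)$, and unwinding the splicing gives $\gd(F^i) \leqslant \gd(V) - i$.

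It remains to bound the cohomology. Each $H^i(\filcom V)$ is finitely supported: $H^{-1} = V_T$ and $H^0 \cong W_T$ by construction, and the higher ones by the identification with $H^{i-1}(\filcom W)$. For the torsion-degree bound, $i = -1$ is the definition of $\td(V)$; the cases $i \geqslant 1$ follow from the inductive bound for $\filcom W$ and $\gd(W) \leqslant \gd(V) - 1$; and the substantive case is $i = 0$, where we must show $\td(W_T) \leqslant 2\gd(V) - 2$. Because the extensions above are honest short exact sequences, $\td(W) \leqslant \max_{0 \leqslant j \leqslant N-1} \td(\Sigma^j DU) \leqslant \td(DU)$, so everything reduces to the inequality $\td(DU) \leqslant 2\gd(U) - 2$ for the torsion-free module $U$ (whence $\td(W) \leqslant 2\gd(U) - 2 \leqslant 2\gd(V) - 2$). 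This last inequality, a Castelnuovo-Mumford-type bound on the torsion degree of the derivative, is the crux of the theorem and the main obstacle. It sits at the level of the regularity theorems of Church and Ellenberg \cite{CE}: proving it over an arbitrary Noetherian ring requires their $\FI_G$-homology-degree machinery, carried out by an induction on $\gd(U)$ that bounds the $\FI_G$-homology of $U$ through its $\sharp$-filtered resolutions. Granting this estimate, the induction closes and the theorem follows.
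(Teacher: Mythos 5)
The paper itself does not prove Theorem \ref{filcombounds}: it imports it verbatim from \cite[Theorem 1.3]{L2}, and what appears in Section 5.3 is only a description of the construction of $\filcom V$, observing that the $F^i$ are $\sharp$-filtered, that $\gd(Q^{(i)})$ strictly decreases so the complex is bounded, and that $H^i(\filcom V) \cong \widetilde{H^0_\mi}(Q^{(i)})$; the quantitative torsion-degree bounds in part (3) are not established in this paper. Your construction matches the paper's (and \cite{L2}'s) in all essentials — the cosmetic change of shifting $U = V/V_T$ instead of $V$ is harmless since $\Sigma_N V = \Sigma_N U$ once $N$ exceeds the Nagpal number — and your bookkeeping of the generating degrees and the identification $H^0(\filcom V) \cong W_T$, $H^i(\filcom V) \cong H^{i-1}(\filcom W)$ for $i \geqslant 1$, is correct.

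The genuine gap is exactly where you flag it. You correctly reduce the $i = 0$ case of the cohomology bound to an inequality of the form $\td(DU) \leqslant 2\gd(U) - 2$ for torsion-free $U$, and then write "granting this estimate." That estimate \emph{is} the theorem: everything else in your argument is soft (Nagpal's theorem, exactness of $\Sigma$, the filtration of $W$ by $\Sigma^j DU$, left-exactness of $\Hom(kG_n, -)$ to pass torsion degrees through short exact sequences). Citing "Church--Ellenberg's $\FI_G$-homology-degree machinery" without carrying it out means the crux is assumed, not proved. If you intend this to stand as a proof rather than as a reduction to \cite{L2} or \cite{CE}, you would need to actually run the $\FI_G$-homology induction: bound $\hd_1(U)$ in terms of $\gd(U)$ for torsion-free $U$, relate $\td(DU)$ to $\hd_1$ via the long exact sequence coming from $0 \to U \to \Sigma U \to DU \to 0$, and iterate. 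That is precisely the content of \cite[Theorem 1.3]{L2}, which the paper (reasonably) treats as a black box.
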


We will find this theorem vital in our studies of the local cohomology of $\FI_G$-modules.\\

\subsection{A local cohomology theory}

Another application of the above machinery and inductive method is in developing a local cohomology theory of $\FI_G$-modules over arbitrary commutative Noetherian rings, generalizing the corresponding work in \cite{SS3}.\\

Given a finitely generated $\FI_G$-module $V$, there is a natural exact sequence
\begin{equation*}
0 \to V_T \to V \to V_F \to 0
\end{equation*}
where $V_T$ is torsion and $V_F$ is torsion free (see Definition \ref{regdef}). The assignment $V \to V_T$ gives rise to a functor, which we denote $H^0_{\mathfrak{m}} (\bullet)$. This is a left exact functor, and its right derived functors $H^i_{\mathfrak{m}} (\bullet)$ are called the \emph{local cohomology functors}. Considering the profound impact that local cohomology has in more classical settings, it is natural for one to wonder whether the same is true for $\FI_G$-modules.\\

Before one considers applying local cohomology modules in bounding various homological invariants, it is important that we develop some way of computing the modules in a systematic way. Using the above homological computation machinery and inductive method, we can provide such a computational tool.\\

\begin{thmab}\label{coequiv}
Let $k$ be a commutative Noetherian ring, $V$ be a finitely generated $\FI_G$-module, and let $\filcom V$ be the complex in Theorem \ref{filcombounds}. Then, for $i \geqslant -1$, there are isomorphisms
\begin{equation*}
H^i(\filcom V) \cong H^{i+1}_\mi(V).
\end{equation*}
Consequently, $H^i_\mi(V)$ is a finitely generated, torsion $\C$-module. Moreover,
\begin{equation*}
\begin{cases}
\td(H^i_\mi(V)) = \td(V) &\text{ if $i = 0$}\\
\td(H^i_\mi(V)) \leq 2\gd(V) - 2i - 2 & \text{ if $1 \leq i \leq \gd(V)$.}
\end{cases}
\end{equation*}\\
\end{thmab}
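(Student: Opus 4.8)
The plan is to use the complex $\filcom V$ of Theorem~\ref{filcombounds} in place of an injective resolution of $V$: it is, up to a shift in cohomological degree, a resolution of $V$ by objects acyclic for the left exact torsion functor $H^0_\mi = (-)_T$, so it computes $H^\bullet_\mi(V)$ with a degree shift. The two facts one needs are: (i) a $\sharp$-filtered module $F$ satisfies $H^i_\mi(F) = 0$ for all $i \geqslant 0$; and (ii) a finitely generated torsion module $T$ satisfies $H^0_\mi(T) = T$ and $H^i_\mi(T) = 0$ for all $i \geqslant 1$. Granting (i) and (ii), the isomorphisms will come out of two hypercohomology spectral sequences, and the finiteness and torsion-degree statements will be read off from Theorem~\ref{filcombounds}.

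I would establish (ii) first, as it is the more routine. Since $\sharp$-filtered modules are torsion free (the case $i = 0$ of Theorem~\ref{charfil}(5)), Theorem~\ref{homorth}(4), applied to an injective module (which trivially satisfies the hypothesis of that statement), shows that every injective $\FI_G$-module is the direct sum of an injective torsion module and a torsion-free injective module. A short essential-extension argument then shows that the injective envelope of a torsion module is again torsion: writing $E(T) = E_\tau \oplus E_\flat$ with $E_\tau$ torsion and $E_\flat$ torsion free, one has $T \hookrightarrow E_\tau$, so the submodule $E_\flat$ meets $T$ trivially and hence vanishes by essentiality. Thus a torsion module admits an injective resolution by torsion modules, on which $H^0_\mi$ acts as the identity, giving $H^i_\mi(T) = 0$ for $i \geqslant 1$. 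Fact (i) is the crux, and the place where the machinery of the first half of the paper is essential. Its $i = 0$ case is again torsion-freeness; the higher vanishing $H^i_\mi(F) = 0$ for $i \geqslant 1$ must be extracted from the orthogonality relations $\Ext^\bullet_{k\FI_G}(T, F) = 0$ between torsion modules $T$ and $\sharp$-filtered modules $F$ (Theorems~\ref{charfil} and~\ref{homorth}), and I would try to reduce to these using the exact shift functor $\Sigma$ together with its compatibility with $\sharp$-filtered modules and with $\Ext$ (Theorem~\ref{bigrepthm}). This is where I expect the real work to lie.

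Granting (i) and (ii), the computation is as follows. Index $\filcom V$ so that its $(-1)$st term is $V$ and its $p$th term is $F^p$ for $0 \leqslant p \leqslant n$, and consider its hypercohomology $\mathbb{H}^\bullet_\mi(\filcom V)$ with respect to $H^0_\mi$, computed by the two standard hypercohomology spectral sequences. The first has $E_1^{p,q} = H^q_\mi((\filcom V)^p)$; by (i) this vanishes for $p \geqslant 0$ and all $q$, so the page is concentrated in the column $p = -1$, where $E_1^{-1,q} = H^q_\mi(V)$, and it collapses to give $\mathbb{H}^i_\mi(\filcom V) \cong H^{i+1}_\mi(V)$. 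The second has $E_2^{p,q} = H^p_\mi(H^q(\filcom V))$; by Theorem~\ref{filcombounds}(3) each cohomology module $H^q(\filcom V)$ is finitely supported, hence a finitely generated torsion module, so (ii) forces $E_2^{p,q} = 0$ for $p \geqslant 1$ while $E_2^{0,q} = H^0_\mi(H^q(\filcom V)) = H^q(\filcom V)$; this page also collapses, giving $\mathbb{H}^i_\mi(\filcom V) \cong H^i(\filcom V)$. Comparing, $H^i(\filcom V) \cong H^{i+1}_\mi(V)$ for all $i \geqslant -1$.

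The remaining assertions follow at once: $H^i_\mi(V) = H^{i-1}(\filcom V)$ is finitely generated and torsion because $H^{i-1}(\filcom V)$ is finitely supported, and the bounds on $\td(H^i_\mi(V))$ are obtained by transporting the estimates of Theorem~\ref{filcombounds}(3) along the isomorphism, with the appropriate shift of index. The one genuinely difficult point is fact (i); everything else is formal homological algebra or a direct appeal to Theorem~\ref{filcombounds}.
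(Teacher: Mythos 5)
Your overall framework is the same as the paper's: identify that $\sharp$-filtered modules and finitely generated torsion modules are the right acyclics for $H^0_\mi$, and then compare $\filcom V$ with an injective resolution of $V$. The paper does this by hand-unrolling the dimension shifts along the short exact sequences
$0 \to Q^{(i)}_T \to Q^{(i)} \to Q^{(i)}_F \to 0$ and $0 \to Q^{(i)}_F \to F^{i+1} \to Q^{(i+1)} \to 0$,
which is exactly what your two degenerating hypercohomology spectral sequences encode. So the real question is whether your proposed proofs of the acyclicity facts (i) and (ii) hold up, and here there are two genuine gaps.

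For fact (ii), your argument applies Theorem~\ref{homorth}(4) to the injective hull $E(T)$ in order to split it as (injective torsion) $\oplus$ (torsion-free injective). But Theorem~\ref{homorth}(4) is stated and proved only for \emph{finitely generated} $V$ (its proof goes through Theorem~\ref{filcombounds}, which is again a finite-generation statement), whereas an injective hull $E(T)$ is essentially never finitely generated. What you would actually need is the stability of the torsion theory — that the torsion part of an arbitrary injective object of $\C\Mod$ is again injective — and that is precisely the kind of statement the paper is careful to avoid needing. The paper's Corollary~\ref{toracyclic} instead reexpresses $\widetilde{H^i_\mi}$ as $\varinjlim_n \fiExt^i(k\C/\mi^n, -)$ (Proposition~\ref{homversion}) and shows, by a direct computation with a projective resolution of the kernel $K^{(r,n)}$, that $\fiExt^i(k\C/\mi^n, T) = 0$ for $n \gg 0$ whenever $T$ is finitely supported. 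No injective objects enter at all.

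For fact (i), you correctly identify $\Ext^\bullet_{k\C}(T,F) = 0$ for torsion $T$ and $\sharp$-filtered $F$ as the key input, and you flag this as ``where the real work lies,'' but you leave the bridge from those $\Ext$-vanishing statements to $H^i_\mi(F) = 0$ unbuilt, gesturing at an induction using $\Sigma$ and adjunction. That is not how the connection is made: the missing ingredient is again Proposition~\ref{homversion}, which identifies $\widetilde{H^i_\mi}(F)$ with a direct limit of $\bigoplus_r \Ext^i_{k\C}(M(r)/\mi^n, F)$, and since each $M(r)/\mi^n$ is a finitely generated torsion module the vanishing follows immediately from Theorem~\ref{orthorel}(1). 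Without some such identification of the derived torsion functor with an $\Ext$-object, the orthogonality relations by themselves do not say anything about $H^i_\mi$. Once (i) and (ii) are in place by these means, your spectral sequence argument and the transport of the bounds from Theorem~\ref{filcombounds} are correct and essentially identical to the paper's conclusion.
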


Using this theorem, we can show that local cohomology groups are related to certain important homological invariants such as the depth, \emph{Nagpal number} (see Definition \ref{nagnum}), and \emph{regularity} of a module (see Definition \ref{regdef}).

\begin{thmab}\label{localcohomologybounds}
Let $k$ be a commutative Noetherian ring, and let $V$ be a finitely generated $\FI_G$-module. Then:
\begin{enumerate}
\item the depth of $V$ is the smallest integer $i$ such that $H^i_\mi(V) \neq 0$;
\item the Nagpal number of $V$, $N(V)$ satisfies the bounds
\begin{equation*}
N(V) = \max\{\td(H^i_\mi(V))\mid i \geqslant 0\} + 1 \leq \max\{\td(V),2\gd(V) - 2\} + 1.
\end{equation*}
\item The regularity of $V$, $\reg(V)$ satisfies the bounds
\begin{equation*}
\reg(V) \leqslant \max\{\td(H^i_\mi(V)) + i\} \leqslant \max\{2\gd(V ) - 1, \td(V )\}.
\end{equation*}\\
\end{enumerate}
\end{thmab}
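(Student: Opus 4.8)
The plan is to derive all three parts from Theorem E (\texttt{coequiv}), which identifies $H^i_\mi(V)$ with $H^{i-1}(\filcom V)$, together with Theorem B (\texttt{equivdepth}) and the torsion-degree bounds already recorded in Theorems D and E. For part (1), I would argue that the local cohomology functors $H^i_\mi$ agree, up to a shift, with the derived functors of the derivative used to define depth; more precisely, I would show $H^i_\mi(V) = 0$ for $i < d$ and $H^d_\mi(V) \neq 0$, where $d$ is the classical depth, by combining \texttt{equivdepth} (depth $=$ classical depth) with the long exact sequences coming from $0 \to V_T \to V \to V_F \to 0$ and the vanishing of $H^i_\mi$ on torsion-free modules in low degrees. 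The cleanest route is probably: $H^0_\mi(V) = V_T \neq 0$ iff $V$ has torsion iff $\depth V = 0$; and for $i \geq 1$, $H^i_\mi(V) \cong H^i_\mi(V_F)$, so one reduces to the torsion-free case and uses that for torsion-free $V$, the complex $\filcom V$ begins with an honest inclusion $V \hookrightarrow F^0$, relating $H^{i-1}(\filcom V)$ to iterated cokernels/derivatives. I would need to check that the derived-functor-of-$D$ description of depth from \cite{R} matches the cohomology of $\filcom V$; this is essentially bookkeeping given that $DV = \coker(V \to \Sigma V)$ and $\filcom V$ is built from $\sharp$-filtered approximations, both governed by the same shift functor.

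For part (2), recall (Definition \ref{nagnum}) that the Nagpal number $N(V)$ is the least $N$ such that $\Sigma_N V$ is $\sharp$-filtered. I would relate this to the top nonvanishing torsion degrees of the $H^i_\mi(V)$ via Theorem E: since each $H^i(\filcom V)$ is finitely supported and $\filcom V$ resolves $V$ by $\sharp$-filtered modules, applying $\Sigma_N$ (which is exact and preserves $\sharp$-filtered modules by Theorem A) kills these finitely supported cohomologies precisely when $N$ exceeds all their torsion degrees. Concretely, $\Sigma_N(\filcom V)$ is then an exact complex of $\sharp$-filtered modules ending in $0$, and an induction on its length shows $\Sigma_N V$ is $\sharp$-filtered; conversely if $\Sigma_N V$ is $\sharp$-filtered then $\Sigma_N \filcom V$ is split-exact in positive degrees, forcing $\Sigma_N H^i_\mi(V) = 0$, i.e. $\td(H^i_\mi(V)) < N$. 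This gives the equality $N(V) = \max_i \td(H^i_\mi(V)) + 1$, and the inequality then follows by plugging in the explicit bounds from Theorem E: $\td(H^0_\mi(V)) = \td(V)$ and $\td(H^i_\mi(V)) \leq 2\gd(V) - 2i - 2 \leq 2\gd(V) - 2$ for $i \geq 1$.

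For part (3), I would use the standard local-cohomology-style bound on regularity: $\reg(V) \leq \max_i \{\td(H^i_\mi(V)) + i\}$. The natural approach is to run the spectral-sequence or long-exact-sequence argument on $\filcom V$ — its terms $F^i$ are $\sharp$-filtered, hence have regularity $0$ (or are controlled by $\gd$), and its cohomologies are the shifted local cohomology modules — so that $\reg(V)$ is bounded by contributions $\td(H^i(\filcom V)) + (i+1) = \td(H^{i+1}_\mi(V)) + (i+1)$, reindexing to the claimed form. Then substituting the Theorem E bounds: the $i=0$ term contributes $\td(V)$, and the $i \geq 1$ terms contribute at most $(2\gd(V) - 2i - 2) + i = 2\gd(V) - i - 2 \leq 2\gd(V) - 3 < 2\gd(V) - 1$, so the maximum is $\max\{2\gd(V) - 1, \td(V)\}$ once one also accounts for the $\sharp$-filtered piece $F^0$ contributing up to $2\gd(V)-1$ via its own generation degree.

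The main obstacle I anticipate is part (1): pinning down the precise comparison between the depth defined through the derived functors $H_i^{D^a}$ of the iterated derivative and the local cohomology $H^i_\mi$ defined through the torsion functor. These are a priori different homological constructions, and while \texttt{equivdepth} bridges depth and classical depth, I still need classical depth (an $\Ext$-vanishing condition against $k\C/\mi$) to match "least $i$ with $H^i_\mi(V) \neq 0$" — this should come from a careful analysis of the two-term sequence $0 \to V_T \to V \to V_F \to 0$ and the long exact sequence in $\Ext^*_{\C\Mod}(k\C/\mi, -)$, but getting the low-degree vanishing exactly right (especially distinguishing depth $0$, depth $1$, and depth $\geq 2$) will require care. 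The regularity bound in part (3) is the second-most delicate point, since getting the sharp constant $2\gd(V) - 1$ rather than something weaker depends on correctly tracking the regularity contribution of the leading $\sharp$-filtered term $F^0$ alongside the finitely-supported cohomologies.
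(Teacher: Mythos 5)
Your plan is essentially viable and all three parts can be made to work, but the route you sketch for part (1) is noticeably more circuitous than what the paper does, and your part (3) is a different (though morally equivalent) organization of the induction.

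For part (1), the paper does not pass through Theorem~\ref{equivdepth} or classical depth at all. It instead invokes Theorem~\ref{depthclass} (the structural characterization of depth from~\cite{R}): $\depth(V)=a>0$ iff there is an exact sequence $0\to V\to X_{a-1}\to\cdots\to X_0\to V'\to 0$ with the $X_j$ $\sharp$-filtered and $V'$ not torsion free. Theorem~\ref{coequiv} produces exactly such a sequence of length $\delta=\min\{i:H^i_\mi(V)\neq 0\}$ directly from $\filcom V$, and the identification is immediate. Your main proposed route --- showing $H^i_\mi$ ``agrees up to a shift'' with the derived functors $H_i^{D^a}$ and detouring through $\Ext^*(k\C/\mi,-)$ --- is a genuinely harder comparison and is precisely the obstacle you yourself flag at the end. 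Your alternative ``cleanest route'' is closer to the paper's spirit, but the missing ingredient you need to actually close it is \ref{depthclass}; once you see that, the whole of part (1) is a two-line argument.

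For part (2) your argument matches the paper's, and you helpfully spell out the converse direction ($\Sigma_N V$ $\sharp$-filtered $\Rightarrow$ $\Sigma_N$ kills all $H^i_\mi(V)$ since $H^i_\mi\circ\Sigma\cong\Sigma\circ H^i_\mi$, hence $\td(H^i_\mi(V))<N$), which the paper leaves implicit.

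For part (3), the paper runs an induction on $\gd$ using the pair of short exact sequences $0\to V_T\to V\to V_F\to 0$ and $0\to V_F\to F\to C\to 0$, transferring regularity along the shift $H^s_\mi(C)\cong H^{s+1}_\mi(V)$ and handling $V_T$ separately via $\reg(V_T)\le\td(V_T)$. Your spectral-sequence/hypercohomology reading of $\filcom V$ is the same computation unrolled; it should go through, but you will need to be careful about which regularity convention you use for the $F^i$ (they are homology-acyclic, so $\reg(F^i)=-\infty$; they do not ``contribute up to $2\gd(V)-1$'' --- that number only appears after substituting the coarser bounds of Theorem~\ref{filcombounds} into $\max_i\{\td(H^i_\mi(V))+i\}$, and the resulting $\max\{2\gd(V)-1,\td(V)\}$ is simply a loose upper bound, not a sharp contribution from $F^0$).

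In short: part (2) is right and in fact slightly more complete than the paper; part (3) is a valid reorganization; part (1) as primarily proposed would lead you into avoidable difficulties --- replace the appeal to $\texttt{equivdepth}$ with Theorem~\ref{depthclass} applied to the exact sequence coming from Theorem~\ref{coequiv} and that part collapses.
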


Motivated by the classical result for polynomial rings, we also state the following conjecture.\\

\begin{conjecture}
Let $k$ be a commutative Noetherian ring, and let $V$ be a finitely generated $\FI_G$-module. Then
\begin{equation*}
\reg(V) = \max\{\td(H^i_\mi(V)) + i\}.
\end{equation*}\\
\end{conjecture}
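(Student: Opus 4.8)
To approach the conjecture, note first that the inequality $\reg(V)\leqslant\max_i\{\td(H^{i}_\mi(V))+i\}$ is Theorem \ref{localcohomologybounds}(3), so only the reverse inequality is at issue. The plan is to extract it from the complex $\filcom V$ of Theorem \ref{filcombounds}, feeding the torsion degrees of the local cohomology of $V$ into the $\FI_G$-homology $H^{\FI_G}_{\bullet}(V)=\Tor_{\bullet}^{k\FI_G}(k\C/\mi,V)$ (see Definition \ref{homdef}). Write $F^{\bullet}$ for the $\sharp$-filtered complex $F^{0}\to\cdots\to F^{n}$ and regard $\filcom V$ as the cone of the chain map $V\to F^{\bullet}$ induced by $V\to F^{0}$; this yields a distinguished triangle $V\to F^{\bullet}\to\filcom V\to V[1]$ in the bounded derived category of $k\FI_G$-modules. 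Apply the total left derived functor $k\C/\mi\otimes^{\mathbb L}_{k\C}(-)$. Because $\sharp$-filtered modules have no higher $\FI_G$-homology (Theorem \ref{charfil}(1)), its value on $F^{\bullet}$ is the ordinary complex $k\C/\mi\otimes_{k\C}F^{\bullet}$; chasing the long exact sequence of the triangle then shows that, for $j\geqslant 1$, $H^{\FI_G}_{j}(V)$ is computed by the homology of $k\C/\mi\otimes^{\mathbb L}_{k\C}\filcom V$, which by Theorem \ref{coequiv} is the abutment of a convergent spectral sequence assembled from the groups $H^{\FI_G}_{i+j}(H^{i}_\mi(V))$, $0\leqslant i\leqslant\gd(V)$. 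Crucially, its differentials preserve the internal grading, strictly raise the local-cohomology index $i$, and lower the abutting homological degree by one.

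Second, one needs the torsion base case: a nonzero finitely generated torsion module $T$ satisfies $\reg(T)=\td(T)$. Here $\leqslant$ is Theorem \ref{localcohomologybounds}(3) once more (all higher local cohomology of $T$ vanishes), while $\geqslant$ follows by filtering $T$ by its top-degree submodule $T_{\td(T)}$, inducting on $\td(T)$, and reading off the linear strand of a minimal free resolution of a module concentrated in a single degree. One also wants that $\td\!\big(H^{\FI_G}_{a}(T)\big)=\td(T)+a$ persists for arbitrarily large $a$.

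Granting these, put $M^{\ast}=\max_i\{\td(H^{i}_\mi(V))+i\}$ and suppose first that the maximum is attained in isolation: there is an index $i_{0}$ with $\td(H^{i_{0}}_\mi(V))+i_{0}=M^{\ast}$ and $\td(H^{i}_\mi(V))+i\leqslant M^{\ast}-2$ for all $i<i_{0}$. Applying the base case to $T=H^{i_{0}}_\mi(V)$, fix $a_{0}>i_{0}$ with $\td\!\big(H^{\FI_G}_{a_{0}}(H^{i_{0}}_\mi(V))\big)=\td(H^{i_{0}}_\mi(V))+a_{0}$ and set $j_{0}=a_{0}-i_{0}\geqslant 1$. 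The resulting $E_{2}$-class lies in internal degree $M^{\ast}+j_{0}$ and abuts to $H^{\FI_G}_{j_{0}}(V)$. A short arithmetic check---using that the spectral sequence begins at $E_{2}$, the behaviour of the differentials recorded above, and $\reg(H^{i}_\mi(V))=\td(H^{i}_\mi(V))$ for every $i$---shows that in internal degree $M^{\ast}+j_{0}$ this class neither emits nor receives a differential, so it survives to $E_{\infty}$. Hence $\td(H^{\FI_G}_{j_{0}}(V))\geqslant M^{\ast}+j_{0}$ and therefore $\reg(V)\geqslant M^{\ast}$.

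The essential obstacle is the general case, where several local-cohomology indices $i$ with $\td(H^{i}_\mi(V))+i$ equal or nearly equal to $M^{\ast}$ contribute classes in a common internal degree; these can cancel one another along the spectral sequence differentials, so that survival to $E_{\infty}$ is no longer forced by degree considerations alone. This is precisely the step at which the classical Eisenbud--Goto equality over a polynomial ring is rescued by local duality, $H^{i}_\mi(M)^{\vee}\cong\Ext^{n-i}_{S}(M,\omega_{S})$, which trades the delicate bookkeeping of differentials for an exact pairing. Accordingly, the natural route to the full conjecture is to establish an $\FI_G$-analogue of local duality valid over an arbitrary commutative Noetherian ring---generalizing the characteristic-zero picture of \cite{SS3}, presumably by identifying the total right derived functor of $H^{0}_\mi$ with a shift of $R\fiHom(V,\omega^{\bullet})$ for a dualizing complex $\omega^{\bullet}$ of $\sharp$-filtered modules---and to pair it with the matching $\Ext$-description of regularity. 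Constructing such a dualizing complex when $k$ is not a field, and especially in positive characteristic where the homological algebra of $k\FI_G$ is considerably worse, is the principal difficulty, and is why the statement is recorded here only as a conjecture; the spectral sequence above should be regarded as the unconditional skeleton onto which such a duality would be grafted.
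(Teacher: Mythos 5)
The statement you are addressing is stated in the paper only as a conjecture: the authors prove the inequality $\reg(V)\leqslant\max_i\{\td(H^i_\mi(V))+i\}$ (Theorem \ref{regbound}) and explicitly leave the reverse inequality open, remarking only that the classical induction on projective dimension is unavailable. Your proposal is an honest strategy sketch rather than a proof, and you say so yourself; so the verdict is that a genuine gap remains, in two places. First, your ``torsion base case'' $\reg(T)=\td(T)$ is not an innocuous lemma: it is itself an instance of the conjecture (for torsion $T$ all higher local cohomology vanishes and $H^0_\mi(T)=T$), and the paper records it only as a \emph{consequence} of the conjecture. Your sketch of it --- filter $T$ by the submodule generated by $T_{\td(T)}$ and induct --- runs into exactly the same cancellation problem you later identify in the spectral sequence: the long exact sequence in $\FI_G$-homology associated to $0\to T'\to T\to T''\to 0$ can kill the top-degree classes contributed by $T'$, so $\hd_i(T)\geqslant\td(T)+i$ is not forced by the corresponding statement for modules concentrated in a single degree. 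The stronger persistence claim $\td(H_a(T))=\td(T)+a$ for arbitrarily large $a$, which your arithmetic check leans on, is likewise unproved. Second, even granting the base case, your argument only treats the artificial situation where the maximum $M^*$ is attained ``in isolation'' with a gap of $2$ below it; you concede that in general classes in a common internal degree can cancel along the differentials, and the proposed remedy (an $\FI_G$-analogue of local duality over an arbitrary Noetherian ring, with a dualizing complex of $\sharp$-filtered modules) is a substantial open construction, not a step you can currently take.

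That said, the unconditional part of your skeleton is sound and compatible with the paper's machinery: the upper bound is indeed Theorem \ref{localcohomologybounds}(3); the complex $\filcom V$ of Theorem \ref{filcombounds} does compute local cohomology (Theorem \ref{coequiv}); $\sharp$-filtered modules are acyclic for both $\Tor^{k\C}_\bullet(k\C/\mi,-)$ and $H^\bullet_\mi$ (Theorems \ref{homacyclic} and Corollary \ref{filacyclic}), so the hypercohomology spectral sequence you describe exists and degenerates on the $F^\bullet$ side. This is a reasonable framework in which to attack the conjecture, and your diagnosis of where it breaks --- cancellation among contributions from different local cohomology indices, which classically is ruled out by local duality --- is accurate. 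But as a proof of the stated equality it is incomplete, in precisely the way the paper itself anticipates.
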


The previous theorem tells us that the right hand side of our conjectured identity is an upper bound on the left hand side. It therefore only remains to show the opposite inequality. We note that the natural way one might try to accomplish this, namely through some induction argument on the projective dimension, cannot work in this case for reasons we have already stated. If this conjecture proves to be false, it would provide evidence that the Hilbert Syzygy theorem is actually vital to the statement being true in the classical case.\\

\subsection{Organization}

This paper is organized as follows. In Section 2 we describe some elementary definitions and results, which will be used throughout this paper. In Section 3 we study the shift functor and its adjoint functors, prove a few crucial technical tools, and use them to show that the depth and classical depth coincide. In Section 4 we use the inductive tools developed in Section 4 to prove a variety of homological structure theorems about $\sharp$-filtered modules. Finally, in the last section we develop a local cohomology theory of $\FI_G$-modules and discuss its applications. We also present the above conjecture and its useful consequences.

\section{Preliminaries}

\subsection{Elementary Definitions}

For the remainder of this paper we fix a finite group $G$, and a commutative Noetherian ring $k$. The category $\FI_G$ is that whose objects are the finite sets $[n] := \{1,\ldots,n\}$ and whose morphisms are pairs, $(f,g):[n] \rightarrow [m]$, of an injection $f:[n] \hookrightarrow [m]$ with a map of sets $g:[n] \rightarrow G$. Given two composable morphisms $(f,g),(f',g')$, composition in this category is defined by
\[
(f,g) \circ (f',g') = (f \circ f', h)
\]
where $h(x) = g'(x)g(f'(x))$. It follows immediately from this that for any $[n]$, the endomorphisms of $[n]$ form the group $\End_{\FI_G}([n]) = G \wr \mathfrak{S}_n$, where $\mathfrak{S}_n$ is the symmetric group on $n$ letters. We will write $G_n$ as a shorthand for this group.\\

Two important special cases of $\FI_G$ are those wherein $G = 1$ is the trivial group, and $G = \mathbb{Z}/2\mathbb{Z}$. In the first case, $\FI_G$ is equivalent to the category $\FI$ of finite sets and injective morphisms. In the second case we have $\FI_G = \FI_{BC}$, which was studied in \cite{JW}.\\

\begin{definition}
An \textbf{$\FI_G$-module} is a covariant functor $V$ from $\FI_G$ to the category of $k$-modules. We write $V_n$ for the module $V([n])$, and, given any map $(f,g):[n] \rightarrow [m]$, we write $(f,g)_\as$ for $V(f,g)$. We call the morphisms $(f,g)_\as$ the \textbf{induced maps} of $V$. In the specific case where $n < m$, we call $(f,g)_\as$ a \textbf{transition map} of $V$.\\

The collection of $\FI_G$-modules, with natural transformations, form an abelian category which we denote $\FI_G\Mod$.\\
\end{definition}

The definition for $\FI_G$-module given above was introduced by Church, Ellenberg, and Farb in \cite{CEF}. This was followed by the work of Wilson \cite{JW}, as well as that of Sam and Snowden \cite{SS}\cite{SS2}. More recently, a new approach to the subject has been considered, which is more rooted in classical representation theory. This can be seen in the works of Gan, the first author, and Yu \cite{GL} \cite{L} \cite{LY}\\

\begin{definition}
Let $k\FI_G$ denote the \textbf{category algebra} whose additive group is given by
\[
k\FI_G := \bigoplus_{n \leq m} k[\Hom_{\FI_G}([n],[m])],
\]
where $k[\Hom_{\FI_G}([n],[m])]$ is the free $k$-module with a basis indexed by the set $\Hom_{\FI_G}([n],[m])$. Multiplication in $k\FI_G$ is defined on basis vectors $(f,g):[n] \rightarrow [m], (f',g'):[r] \rightarrow [s]$ by
\[
(f,g) \cdot (f',g') = \begin{cases} (f,g) \circ (f',g') & \text{ if $n = s$}\\ 0 & \text{ otherwise.}\end{cases}
\]
Write $e_n \in \End_{\FI_G}([n])$ for the identity on $[n]$ paired with the trivial map into $G$. Then we say that a module $V$ over $k\FI_G$ is \textbf{graded} if $V = \bigoplus_n e_n\cdot V$. In this case we write $V_n := e_n \cdot V$.\\
\end{definition}

\begin{remark}
Because all $k\FI_G$-modules considered in this paper are graded, we will simply refer to them as $k\FI_G$-modules.\\
\end{remark}

If $V$ is a $k\FI_G$-module, then we obtain an $\FI_G$-module by setting $V_n := e_n \cdot V$, and defining the induced maps in the obvious way. It is clear that this defines an equivalence between the category of $\FI_G$-modules, and the category of (graded) $k\FI_G$-modules. We will use both definitions interchangeably during the course of this paper.\\

\begin{remark}
In everything that follows, our results will not depend on the finite group $G$. Therefore, to clarify notation, we will write $\C := \FI_G$.\\
\end{remark}

\begin{definition}\label{sfildef}
Let $W$ be a $kG_n$-module for some $n \geqslant 0$. Then the \textbf{basic filtered} module over $W$, is the $\C$-module $M(W)$ defined by the assignments
\[
M(W)_m = k[\Hom_{\C}([n],[m])] \otimes_{kG_n} W.
\]
The induced maps of $M(W)$ are defined by composition on the first coordinate. In the special case where $W = kG_n$, we write $M(n) := M(W)$, and refer to direct sums of these modules as being \textbf{free}.\\

Since $kG_n$ can be viewed as a subalgebra of $k\C$, we see that $M$ is nothing but the induction functor $k\C \otimes _{kG_n} \bullet$.\\

We say that a $\C$-module $V$ is \textbf{$\sharp$-filtered} if it admits a filtration
\[
0 = V^{(-1)} \subseteq V^{(0)} \subseteq V^{(1)} \subseteq \ldots \subseteq V^{(n)} = V
\]
such that $V^{(i)}/V^{(i-1)} = M(W^{(i)})$, for some $kG_i$-module $W^{(i)}$, for each $i \geqslant 0$.\\
\end{definition}

It was shown in \cite{LY}, as well as \cite{R}, that $\sharp$-filtered objects are of a fundamental importance to the study of homological properties of $\C$-modules. For instance, $\sharp$-filtered objects are precisely the acyclic objects with respect to certain natural right exact functors. These will be discussed in the coming sections (See Theorems \ref{homacyclic} and \ref{depthclass}). One of the interesting consequences of the results in this paper is that $\sharp$-filtered objects are also acyclic with respect to certain left exact functors.\\

The following proposition follows easily from the relevant definitions.\\

\begin{proposition}\label{yoneda}
Let $W$ be a $kG_n$-module. Then there is a natural adjunction,
\[
\Hom_{\C\Mod}(M(W),V) \cong \Hom_{kG_n\Mod}(W,V_n),
\]
given by
\[
\phi \mapsto \phi_n.tor
\]
\end{proposition}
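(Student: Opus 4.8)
This is the $\FI_G$-analogue of the standard Eckmann--Shapiro / Frobenius reciprocity adjunction for induced modules, so the plan is to exhibit the adjunction unit explicitly and check it is a bijection natural in both variables. First I would recall that, as noted right after Definition \ref{sfildef}, $M(W) = k\C \otimes_{kG_n} W$, and that $kG_n = e_n (k\C) e_n$ sits inside $k\C$ as the endomorphism ring of the object $[n]$. Thus $M$ is the functor $k\C \otimes_{kG_n} (-)$ from $kG_n\module$ to $\C\Mod$, while the functor sending a graded $k\C$-module $V$ to its degree-$n$ piece $V_n = e_n \cdot V$ is exactly restriction of scalars along $kG_n \hookrightarrow k\C$ (composed with the projection $e_n$). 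The asserted isomorphism is then the tensor--hom adjunction $\Hom_{k\C}(k\C \otimes_{kG_n} W, V) \cong \Hom_{kG_n}(W, \Hom_{k\C}(k\C, V)) \cong \Hom_{kG_n}(W, e_n V) = \Hom_{kG_n}(W, V_n)$, where the middle step uses that $\Hom_{k\C}(k\C, V) \cong V$ as left $k\C$-modules and hence, after applying $e_n$, as $kG_n$-modules.

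The key steps, in order, are: (1) identify $M(W)$ with $k\C \otimes_{kG_n} W$ and identify the evaluation functor $V \mapsto V_n$ with the appropriate restriction functor; (2) write down the map $\Phi \colon \Hom_{\C\Mod}(M(W), V) \to \Hom_{kG_n\Mod}(W, V_n)$ sending a natural transformation $\phi$ to its degree-$n$ component $\phi_n$, precomposed with the canonical $kG_n$-map $W \to M(W)_n$, $w \mapsto e_n \otimes w$ (note $M(W)_n = k[\Hom_\C([n],[n])] \otimes_{kG_n} W = kG_n \otimes_{kG_n} W \cong W$, so this canonical map is in fact an isomorphism); (3) construct the inverse: given a $kG_n$-map $\psi \colon W \to V_n$, define $\hat\psi \colon M(W) \to V$ in degree $m$ by $(f,g) \otimes w \mapsto (f,g)_* (\psi(w))$, and check this is well defined (independent of the representative of the class in $k[\Hom_\C([n],[m])] \otimes_{kG_n} W$, which is exactly where $kG_n$-equivariance of $\psi$ is used) and commutes with all induced maps of $\C$; (4) verify $\Phi$ and $\psi \mapsto \hat\psi$ are mutually inverse and natural in $V$ (and in $W$).

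The main obstacle — really the only nontrivial point — is step (3): checking that $\hat\psi$ is well defined on the tensor product over $kG_n$ and that it is genuinely a morphism of $\C$-modules, i.e. that $\hat\psi_m \circ M(W)(f,g) = V(f,g) \circ \hat\psi_n$ for every morphism $(f,g)$ of $\C$. Both verifications reduce to the functoriality of $V$ together with the definition of the induced maps of $M(W)$ as "composition on the first coordinate," and the $kG_n$-equivariance of $\psi$ handles the well-definedness over $kG_n$. Everything else is formal. I would therefore present this as a short, essentially self-contained argument, remarking that it is the instance of the standard adjunction between an induction functor and restriction to the subalgebra $e_n(k\C)e_n \cong kG_n$, and that naturality in $V$ is immediate from the explicit formula $\phi \mapsto \phi_n$ (composed with the fixed isomorphism $W \xrightarrow{\sim} M(W)_n$).
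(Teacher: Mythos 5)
The paper itself gives no proof of Proposition \ref{yoneda}; it merely asserts that it ``follows easily from the relevant definitions,'' and your argument supplies exactly that, so you are in agreement with the paper's intent. Your steps (2)--(4) — defining $\Phi(\phi) = \phi_n$ precomposed with $W \xrightarrow{\sim} M(W)_n$, constructing the candidate inverse $\psi \mapsto \hat\psi$ by $(f,g)\otimes w \mapsto (f,g)_* \psi(w)$, and checking well-definedness over $kG_n$, $\C$-equivariance, and naturality — are correct and complete.

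One small imprecision in your ``formal'' version of the argument: since $k\C$ is a non-unital algebra (with local units $\{e_n\}$), the isomorphism $\Hom_{k\C}(k\C,V)\cong V$ does not literally hold; in fact $\Hom_{k\C}(k\C,V)\cong\prod_m V_m$. The clean fix, which changes nothing in your explicit computations, is to observe that for $a\notin k\C e_n$ one has $a\otimes w = a e_n\otimes w = 0$ in $k\C\otimes_{kG_n} W$, so $M(W)\cong k\C e_n\otimes_{kG_n} W$ with $k\C e_n$ a $(k\C,kG_n)$-bimodule, and then to apply tensor--hom adjunction directly to that bimodule: $\Hom_{k\C}(k\C e_n\otimes_{kG_n} W,V)\cong \Hom_{kG_n}(W,\Hom_{k\C}(k\C e_n,V))\cong \Hom_{kG_n}(W,V_n)$, where $\Hom_{k\C}(k\C e_n,V)\cong V_n$ via $\phi\mapsto\phi(e_n)$ holds with no caveat. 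Your hands-on verification in steps (2)--(4) already encodes exactly this, so the slip is cosmetic rather than a gap.
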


One observes that for a $kG_n$-module $W$, $M(W)$ is projective if and only if $W$ is projective. In fact, it can be shown that all projective $\C$-modules are direct sums of basic filtered modules \cite[Proposition 2.13]{R}. Therefore, free $\C$-modules are always projective.\\

\begin{definition}
Given a finitely generated $kG_n$-module $W$, we say that $M(W)$ is \textbf{finitely generated}. We say that a $\sharp$-filtered object is \textbf{finitely generated} if the cofactors in its defining filtration are finitely generated. Finally, we say that a $\C$-module is finitely generated if and only if it is a quotient of a finitely generated $\sharp$-filtered object.\\

We denote the category of finitely generated $\C$-modules by $\C\module$.
\end{definition}

\begin{remark}
One immediately remarks that the free module $M(n)$ is finitely generated for all $n$. In fact, it is an easily seen consequence of Nakayama's Lemma (Lemma \ref{nak}) that a module $V$ is finitely generated if and only if there is a finite set $I$, and a finite collection of non-negative integers $\{n_i,m_i\}_{i \in I}$ such that $V$ is a quotient of $\bigoplus_{i \in I} M(n_i)^{m_i}$.\\

Proposition \ref{yoneda} informs us that a map $M(n) \rightarrow V$ is equivalent to a choice of an element of $V_n$. Putting everything together, we can conclude that $V$ is finitely generated if and only if there is some finite set of elements in $\sqcup_{n} V_n$ which are not contained in any proper submodule. We call such a set a \textbf{generating set of elements} for $V$. While a definition of this sort may seem more natural, we use the above definition to be more in line with the philosophy of Theorem \ref{homacyclic}.\\
\end{remark}

One remarkable fact about the category $\C\module$ is that it is also abelian. Given any morphism of finitely generated $\C$-modules, it is clear that its image and cokernel must also be finitely generated. The significance of saying that $\C\module$ is abelian therefore comes from the fact that the kernel of this morphism must also be finitely generated. Put another way, the category $\C\module$ is Noetherian.\\

This fact was proven by Sam and Snowden in \cite[Corollary 1.2.2]{SS2} for general $\C$-modules, although it had been proven in certain specific cases earlier \cite[Theorem 4.21]{JW}\cite[Theorem 1.3]{CEF}\cite[Theorem A]{CEFN}\cite[Theorem 2.3]{S}. We often refer to the following result as the \textbf{Noetherian property}.\\

\begin{theorem}[\cite{SS2}, Corollary 1.2.2]
If $V$ is a finitely generated $\C$-module over a Noetherian ring $k$, then all submodules of $V$ are also finitely generated.\\
\end{theorem}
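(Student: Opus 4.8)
The statement to prove is the Noetherian property: if $V$ is a finitely generated $\C$-module over a Noetherian ring $k$, then every submodule $W \subseteq V$ is again finitely generated. Since the result is quoted directly from Sam–Snowden \cite[Corollary 1.2.2]{SS2}, the plan is really to indicate the structure of that argument in a form adapted to the present setting. The overall strategy is a Gröbner-theoretic/combinatorial one: one reduces the problem to a statement about well-partial-orderings on the relevant indexing sets.

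First I would reduce to the case of a free module. Since $V$ is finitely generated, by the remark following Definition \ref{sfildef} it is a quotient of some $P = \bigoplus_{i \in I} M(n_i)^{m_i}$ with $I$ finite; a submodule $W \subseteq V$ lifts to a submodule $\widetilde{W} \subseteq P$ whose image is $W$, so it suffices to show submodules of $P$ are finitely generated, i.e. that $P$ is a Noetherian object. Because finite direct sums of Noetherian modules are Noetherian, this further reduces to showing each $M(n)$ is Noetherian, and since $M(n)_m = k[\Hom_\C([n],[m])]$, one is asking that the "monomial" $k\C$-module on a single generator in degree $n$ has the ascending chain condition on submodules.

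Next I would set up the combinatorial model. A basis element of $M(n)_m$ is a morphism $[n] \to [m]$ in $\C = \FI_G$, i.e. an injection together with a $G$-labelling; the $k\C$-action is by post-composition. One organizes these basis elements into a poset (or more precisely, one studies the induced-submodule structure): a submodule of $M(n)$ over a field is spanned by a subset of basis morphisms closed under the action, and over a general Noetherian $k$ one uses a term order and passes to initial submodules, which are of this monomial type. The key point — and this is where the real work lies — is to establish that the relevant partial order on $\bigsqcup_m \Hom_\C([n],[m])$ is a \emph{well-partial-order}: every subset has finitely many minimal elements, equivalently there is no infinite antichain and no infinite strictly descending chain. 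This is proved via Higman's lemma (the fact that words over a well-quasi-ordered alphabet are well-quasi-ordered under the subword order), applied with the alphabet being $G$ together with "gap" markers, so the finiteness of $G$ is used here. Given the well-partial-order statement, the ascending chain condition on monomial submodules is formal, and then a standard Gröbner-basis argument (reduction modulo a term order, Noetherianity of $k$ to handle the coefficient modules attached to each monomial) upgrades this to the full module $M(n)$ over $k$.

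The main obstacle is precisely the combinatorial heart: verifying that the category $\FI_G$ is "quasi-Gröbner" / that the poset of morphisms out of a fixed object is a well-partial-order. This requires choosing the right encoding of an $\FI_G$-morphism as a word (or as an element of a product of well-ordered sets) so that the $\C$-action corresponds to an order-preserving operation and Higman's lemma applies; the finiteness of $G$ enters exactly at this step, and without it the statement can fail. Everything after that — the reduction to $M(n)$, the term-order/initial-module argument, and the bookkeeping with coefficients over Noetherian $k$ — is routine once the well-partial-order is in hand. Since all of this is carried out in full in \cite{SS2}, in the paper itself it is appropriate simply to cite that reference rather than reproduce the argument.
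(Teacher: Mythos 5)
The paper does not actually prove this theorem; it is quoted verbatim from Sam--Snowden \cite{SS2} as a known input (the ``Noetherian property'') and then used throughout. So there is no in-paper proof to compare against, and the appropriate action in the paper's own source is exactly what it does: cite the reference. Your sketch is a fair summary of the Sam--Snowden Gr\"obner-theoretic argument, and your reductions (from $V$ to a finitely generated free cover $P$, then to a single $M(n)$) are correct, as is the identification of the combinatorial heart: showing that the poset of morphisms out of $[n]$ in $\FI_G$, under the divisibility/post-composition order, is a well-partial-order via Higman's lemma with alphabet built from the finite group $G$, followed by a term-order argument and the Noetherianity of $k$ to handle coefficients.

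One imprecision worth fixing in your write-up: you say ``a submodule of $M(n)$ over a field is spanned by a subset of basis morphisms closed under the action, and over a general Noetherian $k$ one uses a term order.'' That is not how the reduction runs. Even over a field, an arbitrary submodule of $M(n)$ is not monomial --- only its \emph{initial} submodule with respect to a chosen admissible order is --- so the passage to a term order and to initial submodules is an essential step over any base ring, not a patch for the non-field case. What the Noetherianity of $k$ is needed for is the further refinement: after fixing a leading monomial, the set of possible leading coefficients forms an ideal (or submodule) of $k$, and one needs the ascending chain condition there to finish the Gr\"obner-basis argument. With that correction your outline matches the structure of the cited proof.
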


Much of the remainder of the paper will be concerned with various homological properties of the category $\C\module$.\\

\subsection{The Homology Functors}

For the remainder of this paper, we write $\mi \subseteq k\C$ to denote the ideal
\[
\mi := \bigoplus_{n < m} k[\Hom_{\C}([n],[m])].
\]

\begin{definition}\label{homdef}
For any $k\C$-module $V$, we use $H_0(V)$ to denote $k\C/\mi \otimes _{k\C} V$. In the language of $\C$-modules, $H_0:\C\module \rightarrow \C\module$ is the functor defined by
\[
H_0(V)_n = V_n/V_{<n}
\]
where $V_{<n}$ is the submodule of $V_n$ spanned by the images of transition maps originating from $V_m$ with $m < n$. We use $H_i$ to denote the derived functors
\[
H_i(V) := \Tor_i ^{k\C} (k\C/\mi, V)
\]
We call the functors $H_i$ the \textbf{homology functors}.\\
\end{definition}

\begin{proposition}[Nakayama's Lemma]\label{nak}
Let $V$ be a $\C$-module, let $\{\widetilde{v}_i\} \subseteq \sqcup_n H_0(V)_n$ be a collection of elements which generate $H_0(V)$, and let $v_i$ be a lift of $\widetilde{v}_i$ for each $i$. Then $\{v_i\}$ is a generating set for $V$.\\
\end{proposition}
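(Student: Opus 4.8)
The plan is to run the classical Nakayama argument, the two ingredients being the right-exactness of the tensor functor $H_0 = k\C/\mi \otimes_{k\C}(\bullet)$ and an elementary degreewise vanishing criterion for $\C$-modules. Let $W \subseteq V$ be the $\C$-submodule generated by the collection $\{v_i\}$; proving the proposition amounts to showing $W = V$, i.e. $V/W = 0$.

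First I would apply $H_0$ to the short exact sequence $0 \to W \to V \xrightarrow{\pi} V/W \to 0$. Right-exactness gives an exact sequence $H_0(W) \to H_0(V) \to H_0(V/W) \to 0$, so in particular $H_0(\pi)\colon H_0(V) \to H_0(V/W)$ is a surjective map of $\C$-modules. By naturality of the canonical quotient $V \to H_0(V)$, the class $H_0(\pi)(\widetilde v_i)$ is the image of $\pi(v_i)$ in $H_0(V/W)$; since $v_i \in W$ we have $\pi(v_i) = 0$, hence $H_0(\pi)(\widetilde v_i) = 0$ for every $i$. The $\widetilde v_i$ generate $H_0(V)$ as a $\C$-module, and the image of a generating set under a surjection of $\C$-modules is a generating set of the target; as that image is $\{0\}$, we conclude $H_0(V/W) = 0$.

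It then remains to show that a graded $\C$-module $U$ with $H_0(U) = 0$ vanishes. Unwinding Definition \ref{homdef}, $H_0(U)_n = U_n/U_{<n}$, so the hypothesis says $U_n = U_{<n}$ for all $n$, where $U_{<n}$ is spanned by images of transition maps out of $U_m$ with $m < n$. Induct on $n$: for $n = 0$ there are no strictly smaller degrees, so $U_{<0} = 0$ and $U_0 = 0$; for $n > 0$ the inductive hypothesis forces $U_m = 0$ for all $m < n$, hence every transition map into $U_n$ has zero source, $U_{<n} = 0$, and $U_n = 0$. Therefore $U = 0$; applying this to $U = V/W$ finishes the proof. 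No finiteness assumption on $V$ or on $\{v_i\}$ is needed. There is no genuine obstacle here — the only points meriting care are that "generating set'' is read as "contained in no proper submodule'' (equivalently, generating the whole module) and that generation of $H_0(V)$ is used in the $\C$-module sense, so that surjectivity of $H_0(\pi)$ transports it; the degreewise induction, resting on the fact that $\C$ has no degree-lowering morphisms, is what ultimately does the work.
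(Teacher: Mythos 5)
Your proof is correct, and it takes a slightly different route from the paper's. You run the classical Nakayama strategy: apply the right-exact functor $H_0$ to $0 \to W \to V \to V/W \to 0$ to get $H_0(V/W) = 0$ (since the generators of $H_0(V)$ die under $H_0(\pi)$ by naturality), and then prove the separate lemma that a graded $\C$-module with vanishing $H_0$ is zero, by induction on degree. The paper instead does the induction directly on $V$: at each degree $n$ it lifts the expression of the image of $v \in V_n$ in $H_0(V)_n$ as a combination of the $\widetilde{v}_i$, obtaining $v$ as a combination of the $v_i$ plus images of lower-degree elements, and then invokes the inductive hypothesis on those. The underlying combinatorial engine is identical — an induction on $n$ using $H_0(V)_n = V_n / V_{<n}$ and the absence of degree-lowering morphisms in $\C$ — but your packaging cleanly isolates a reusable lemma ($H_0$ detects nonzero objects) from a piece of homological bookkeeping (right-exactness of $\otimes$), whereas the paper's version is self-contained and elementwise. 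Your remark that no finiteness hypothesis on $V$ or $\{v_i\}$ is needed is correct and worth noting, since the paper applies the result in both finitely and non-finitely generated settings.
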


\begin{proof}
Let $j$ be the least index such that $V_j \neq 0$. Then $V_j = H_0(V)_j$, and it is clear that every element of $V_j$ is a linear combination of those $\widetilde{v}_i$ in $H_0(V)_j$. Next, let $n > j$, and let $v \in V_n$. Then the image of $v$ in $H_0(V)_n$ can be expressed as some linear combination of elements of $\{\widetilde{v}_i\}$. By definition, this implies that $v$ is a linear combination of elements of $\{v_i\}$, as well as images of elements from lower degrees. The result now follows by induction.\\
\end{proof}

One immediate consequence of Nakayama's Lemma is that if $V$ is finitely generated, then $H_0(V)$ is supported in finitely many degrees.\\

\begin{definition}\label{regdef}
Given a $\C$-module $V$, we define its \textbf{support} to be the smallest integer $N$ for which $V_n = 0$ for all $n > N$, if such an integer exists. We define the \textbf{torsion degree} of $V$ by
\[
\td(V) := \sup\{n \mid \Hom_{k\C}(kG_n,V) \neq 0\} \in \mathbb{N} \cup \{-\infty, \infty\},
\]
where $\td(V) = -\infty$ if and only if $V$ is \textbf{torsion free}. Note that if $V$ is finitely supported, then $\td(V)$ is precisely its support.\\

The \textbf{$i$-th homological degree} of $V$ is defined to be the quantity,
\[
\hd_i(V) := \td(H_i(V)).
\]
The zeroth homological degree is often referred to as the \textbf{generating degree} of $V$ and written $\gd(V)$.\\

The \textbf{regularity} of $V$, denoted $\reg(V)$, is the smallest integer $N$ such that
\[
\hd_i(V) \leq N + i
\]
for all $i \geqslant 1$. We say that $\reg(V) = \infty$ if no such $N$ exists, and we say $\reg(V) = -\infty$ if $V$ is acyclic with respect to the homology functors.\\
\end{definition}

\begin{remark}
It is a subtle but important point that the regularity of a module is defined by only bounding the higher homologies. From the perspective of classical commutative algebra this might seem a bit strange. We stress, however, that Theorem \ref{localcohomologybounds} is false if the definition of regularity is altered to include $\td(H_0(V))$. From the perspective of local cohomology this can be explained by the fact that $\sharp$-filtered objects are local cohomology acyclic, despite also being those objects which one might consider as substitutes for projectives (see Corollary \ref{filacyclic}).\\
\end{remark}

It is a remarkable fact that the regularity of any finitely generated $\C$-module is not $\infty$. This was proven in the case of $\FI$-modules in characteristic 0 by Sam and Snowden \cite[Corollary 6.3.5]{SS3}, general $\FI$-modules by Church and Ellenberg \cite[Theorem A]{CE}, and for $\C$-modules by the authors and Yu \cite[Theorem A]{R} \cite[Theorem 1.8]{LY}. Most of these papers also provide explicit bounds on the regularity of a $\C$-module in terms of its first two homological degrees. Later, we will provide new bounds on the regularity of a module in terms of its local cohomology modules.\\

As previously stated, $\sharp$-filtered modules are precisely those which are acyclic with respect to the homology functors.\\

\begin{theorem}[\cite{LY} Theorem 1.3, \cite{R} Theorem B] \label{homacyclic}
Let $V$ be a finitely generated $\C$-module. Then the following are equivalent:
\begin{enumerate}
\item $V$ is $\sharp$-filtered;
\item $V$ is homology acyclic;
\item $H_1(V) = 0$;
\item $H_i(V) = 0$ for some $i \geqslant 1$.\\
\end{enumerate}
\end{theorem}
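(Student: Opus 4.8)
The plan is to run the cycle $(1)\Rightarrow(2)\Rightarrow(3)\Rightarrow(4)$ — the last two implications being trivial — together with $(4)\Rightarrow(3)$ and $(3)\Rightarrow(1)$. For $(1)\Rightarrow(2)$ the point is that the long exact sequence of $\Tor^{k\C}(k\C/\mi,-)$ turns the defining filtration of a $\sharp$-filtered module into a d\'evissage, so it suffices to treat a single basic filtered module $M(W)$ with $W$ a $kG_n$-module. Since $\Hom_\C([n],[m])$ is a free right $G_n$-set, $k\C$ is free as a right $kG_n$-module, so the induction functor $M=k\C\otimes_{kG_n}(-)$ is exact and sends projective $kG_n$-modules to projective $k\C$-modules; applying it to a projective resolution $P_\bullet\to W$ gives a projective resolution $M(P_\bullet)\to M(W)$. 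Finally $(k\C/\mi)\otimes_{k\C}M(P_\bullet)\cong (k\C/\mi)\otimes_{kG_n}P_\bullet\cong P_\bullet$, using associativity of the tensor product and the fact that $k\C/\mi$, viewed as a right $kG_n$-module via $e_n$, is just $kG_n$; this is a complex concentrated in degree $n$ whose positive homology vanishes. Hence $H_i(M(W))=0$ for $i\ge 1$, and the d\'evissage proves $(1)\Rightarrow(2)$.

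For $(3)\Rightarrow(1)$ I would induct on $d:=\gd(V)$. Let $V^{<d}\subseteq V$ be the submodule generated by $\bigcup_{m<d}V_m$; then $\gd(V^{<d})\le d-1$ and $U:=V/V^{<d}$ is generated purely in degree $d$, i.e.\ $H_0(U)$ is concentrated in degree $d$. Chasing the homology long exact sequence of $0\to V^{<d}\to V\to U\to 0$, one checks that $H_0(V^{<d})\to H_0(V)$ is injective (both restrict to $V_m/V_{<m}$ in degrees $m<d$ and the source vanishes in degrees $\ge d$), which together with $H_1(V)=0$ forces $H_1(U)=0$. A single-generating-degree lemma then finishes: if $U$ is generated in degree $d$ and $H_1(U)=0$, the counit $M(U_d)\to U$ of the adjunction in Proposition~\ref{yoneda} is an isomorphism — it is surjective by Nakayama (Proposition~\ref{nak}), its kernel $K$ is supported in degrees $>d$, and the long exact sequence of $0\to K\to M(U_d)\to U\to 0$ together with $H_1(M(U_d))=0$ (from $(1)\Rightarrow(2)$) gives $H_0(K)\cong H_1(U)=0$, so $K=0$ by Nakayama again. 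Thus $U\cong M(U_d)$, which is homology acyclic, so the long exact sequence also yields $H_1(V^{<d})=0$; by the inductive hypothesis $V^{<d}$ is $\sharp$-filtered, and prepending its filtration to the chain $V^{<d}\subseteq V$ (whose cofactor is $M(U_d)$) exhibits $V$ as $\sharp$-filtered.

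It remains to upgrade the hypothesis in $(3)$ from ``$H_1(V)=0$'' to ``$H_i(V)=0$ for some $i\ge1$'', i.e.\ to prove $(4)\Rightarrow(3)$; this is where I expect the real difficulty. The obvious move is dimension shifting: choose a finitely generated free cover $0\to\Omega V\to P\to V\to 0$ (possible since $\C\module$ is Noetherian), so that $H_{i-1}(\Omega V)\cong H_i(V)=0$ because $P$ is homology acyclic. The trouble is twofold: $\Omega V$ can have strictly larger generating degree than $V$, so one cannot simply feed it back into the induction on $\gd$; and — the recurring theme of this paper — over a general Noetherian ring there are no minimal resolutions, so even once one knows $\Omega V$ is $\sharp$-filtered one only recovers $H_{\ge 2}(V)=0$, not $H_1(V)=0$. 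To get around this I would bring in the shift functor: by Theorem~\ref{bigrepthm} the functor $\Sigma$ is exact and preserves both free and $\sharp$-filtered modules, and by Nagpal's theorem $\Sigma^N V$ is $\sharp$-filtered for $N\gg0$; applying $\Sigma^N$ to the resolution and running a downward induction on $N$, using the derivative $DV=\coker(V\to\Sigma V)$ (which strictly decreases the generating degree) together with the induction on $\gd$ from the previous paragraph, should propagate acyclicity back from $\Sigma^N V$ to $V$ itself. Alternatively one can invoke Theorem~\ref{filcombounds}: the finitely supported cohomology groups of the complex $\filcom V$ are the precise obstruction to $V$ being $\sharp$-filtered, and a single vanishing $\Tor$ forces these to vanish. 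Either way, the genuine content is organizing the two nested inductions — on $\gd(V)$ and on the homological index — and this bookkeeping, forced on us by the absence of minimal resolutions, is the main obstacle.
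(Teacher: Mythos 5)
Your treatment of the equivalence $(1)\Leftrightarrow(2)\Leftrightarrow(3)$ is correct. The d\'evissage for $(1)\Rightarrow(2)$ via the exactness of $M=k\C\otimes_{kG_n}(-)$ (using that $k\C$ is free as a right $kG_n$-module) together with the computation $(k\C/\mi)\otimes_{k\C}M(P_\bullet)\cong P_\bullet$ is clean, and the induction on $\gd(V)$ for $(3)\Rightarrow(1)$, splitting off the top cofactor $U\cong M(U_d)$ via the two Nakayama/long-exact-sequence steps, is sound; the one step you leave implicit, that $H_0(K)\to H_0(M(U_d))$ vanishes because the source is supported in degrees $>d$ while the target sits in degree $d$, is easy to supply.

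The genuine gap is exactly where you flag it: $(4)\Rightarrow(3)$ is the real content of the theorem, and your proposal does not prove it. ``Applying $\Sigma^N$ and running a downward induction $\ldots$ should propagate acyclicity back'' is a hope, not an argument: you never say how the hypothesis that $H_i(V)=0$ for a \emph{single} $i\geqslant 1$ is carried through the shifts. Note that $\Sigma_N V$ is $\sharp$-filtered for \emph{every} finitely generated $V$, so the mere existence of that filtration cannot by itself distinguish the acyclic modules; some comparison between the $\mi$-homology of $V$ and that of $\Sigma V$ (or of $DV$) is needed, and you never set one up. Your suggested alternative via $\filcom V$ is circular as stated: the fact that the cohomology of $\filcom V$ is the precise obstruction to $V$ being $\sharp$-filtered is Proposition \ref{exactfil}, which this paper deduces from Theorem \ref{homacyclic} itself. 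As it stands, your proposal establishes only the weaker statement in which $(4)$ is replaced by ``$H_i(V)=0$ for all $i\geqslant 1$''.
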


\begin{remark}
Note that this theorem implies the first three conditions of Theorem \ref{charfil}.\\
\end{remark}

It follows as a consequence of this theorem that the only finitely generated modules which can have finite projective dimension are $\sharp$-filtered objects. The question of what $\sharp$-filtered modules can have finite projective dimension is considered in \cite[Theorem 1.5]{LY}.\\

\section{The Shift Functor and its Adjoints}

\subsection{The shift functor}

\begin{definition}\label{shift}
For any morphism $(f,g):[n] \rightarrow [m]$ in $\C$, we define $(f_+,g_+):[n+1] \rightarrow [m+1]$ to be the morphism
\[
f_+(x) = \begin{cases} f(x) &\text{ if $x < n+1$}\\ m+1 &\text{ otherwise}\end{cases}, \text{  } g_+(x) = \begin{cases} g(x) &\text{ if $x < n+1$}\\ 1 &\text{ otherwise.}\end{cases}
\]
Let $\iota:\C \rightarrow \C$ be the functor defined by
\[
\iota([n]) = [n+1], \text{  } \iota(f,g) = (f_+,g_+).
\]
Then we define the \textbf{shift functor}, or the \textbf{restriction functor}, $\Sigma:\C\Mod \rightarrow \C\Mod$ by $\Sigma V := V \circ \iota$. We write $\Sigma_b$ for the $b$-th iterate of $\Sigma$.\\

We note that the map $\iota$ induces a proper injective map of algebras $k\C \rightarrow k\C$, which we call the \textbf{self-embedding} of $k\C$.\\
\end{definition}

One of the most important properties of the shift functor is that it preserves $\sharp$-filtered objects. This was first observed by Nagpal in \cite[Lemma 2.2]{N}.\\

\begin{proposition} \label{genetic}
Let $W$ be a $kG_n$-module. Then there is an isomorphism of $\C$-modules
\[
\Sigma M(W) \cong M(\Res_{G_{n-1}}^{G_n} W) \oplus M(W).
\]
In particular, if $X$ is a $\sharp$-filtered $\C$-module, then $\Sigma V$ is as well.\\
\end{proposition}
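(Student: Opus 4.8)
The plan is to prove the isomorphism $\Sigma M(W) \cong M(\Res_{G_{n-1}}^{G_n} W) \oplus M(W)$ by a direct combinatorial analysis of the morphism sets, and then deduce the statement about $\sharp$-filtered modules by an easy induction on the length of the filtration. For the first part, I would begin by unwinding the definitions: for each $m$, we have $(\Sigma M(W))_m = M(W)_{m+1} = k[\Hom_\C([n],[m+1])] \otimes_{kG_n} W$, while the induced maps of $\Sigma M(W)$ come from precomposing with the shifted morphisms $(f_+, g_+)$. The key observation is that a morphism $(f,g) \colon [n] \to [m+1]$ in $\C$ falls into exactly one of two classes: either $m+1$ lies in the image of $f$, or it does not.

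The core step is to show that these two classes are each closed under the shifted induced maps, and to identify the resulting subfunctors. If $m+1 \notin \operatorname{im}(f)$, then $(f,g)$ factors (uniquely, since $f$ is an injection with image missing the last point) through the standard inclusion $[m] \hookrightarrow [m+1]$ paired with the trivial $G$-map; this identifies the span of such basis elements with $M(W)_m$, and one checks the induced maps match because $f_+$ sends $m+1 \mapsto m+2$, again missing the top point. If $m+1 \in \operatorname{im}(f)$, say $f(j) = m+1$ for the unique such $j \in [n]$, then one can ``split off'' the $j$-th coordinate: the data of $(f,g)$ with this property is equivalent to a choice of $j \in [n]$, a group element $g(j) \in G$, and a morphism $[n-1] \to [m]$ obtained by deleting the $j$-th slot. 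After tensoring over $kG_n$ with $W$, summing over the coset data collapses to give exactly $k[\Hom_\C([n-1],[m])] \otimes_{kG_{n-1}} (\Res_{G_{n-1}}^{G_n} W) = M(\Res_{G_{n-1}}^{G_n} W)_m$. I would verify that this identification is compatible with the (shifted) induced maps, using that $f_+$ still sends $j$ to $m+1 \mapsto$ nothing — wait, rather that $f_+(j) = f(j) = m+1$ still lies below the new top point $m+2$, so the ``position $j$'' bookkeeping is preserved. Thus $\Sigma M(W)$ decomposes as an internal direct sum of these two subfunctors, yielding the claimed isomorphism.

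The main obstacle I anticipate is bookkeeping precisely through the wreath-product structure of $G_n = G \wr \mathfrak{S}_n$ when performing the ``delete the $j$-th coordinate'' step: one must be careful that the tensor product $\otimes_{kG_n} W$ interacts correctly with the reindexing, i.e. that averaging over which point maps to $m+1$ (an $\mathfrak{S}_n$-worth of choices, each carrying a $G$-label) is exactly what the functor $\Res_{G_{n-1}}^{G_n}$ and then $M(-)$ reconstruct. Making the isomorphism natural in $m$ — compatible with all transition maps, not just checking it degreewise — is where the real care is needed, and it is cleanest to phrase the whole thing as an isomorphism of $\C$-modules constructed from an explicit $kG_n$-equivariant decomposition of the bimodule $k[\Hom_\C([n], \iota(-))]$.

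Finally, for the ``in particular'' clause: suppose $X$ is $\sharp$-filtered with filtration $0 = X^{(-1)} \subseteq \cdots \subseteq X^{(\ell)} = X$ and $X^{(i)}/X^{(i-1)} \cong M(W^{(i)})$. Since $\Sigma$ is exact (it is given by precomposition with $\iota$, hence preserves all kernels and cokernels computed pointwise), applying $\Sigma$ gives a filtration $0 = \Sigma X^{(-1)} \subseteq \cdots \subseteq \Sigma X^{(\ell)} = \Sigma X$ with successive quotients $\Sigma(M(W^{(i)})) \cong M(\Res_{G_{i-1}}^{G_i} W^{(i)}) \oplus M(W^{(i)})$. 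Refining each of these two-step pieces into a length-two filtration (one gets each $M(-)$ as the graded pieces) produces a genuine $\sharp$-filtration of $\Sigma X$, so $\Sigma X$ is $\sharp$-filtered. (The only minor point is to note that a direct sum of two basic filtered modules, one over $kG_{i-1}$ and one over $kG_i$, is itself $\sharp$-filtered, which is immediate from the definition by ordering the pieces by their degrees.)
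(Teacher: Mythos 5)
Your proposal takes essentially the same route as the paper's: both hinge on partitioning $\Hom_\C([n],[m+1])$ according to whether $m+1$ lies in the image of the injection, with the paper constructing a map from the direct sum via the Yoneda adjunction (Proposition \ref{yoneda}) and deferring the verification that it is an isomorphism to a cited reference, while you carry out the degreewise internal decomposition of $\Sigma M(W)$ directly. The one point to tidy is your final filtration step: after refining, the chain has two consecutive quotients generated in each degree $j$, namely $M(W^{(j)})$ followed by $M(\Res_{G_j}^{G_{j+1}} W^{(j+1)})$, so to match Definition \ref{sfildef} literally you should merge those adjacent layers, using that any extension of basic filtered modules generated in the same degree is again basic filtered (by exactness of $M(\bullet)$ and the five lemma).
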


\begin{proof}
We will construct the isomorphism here, and direct the reader to \cite[Proposition 2.21]{R} for a proof that the map we construct is actually an isomorphism.\\

Proposition \ref{yoneda} implies that any map $M(\Res_{G_{n-1}}^{G_n} W) \oplus M(W) \rightarrow \Sigma M(W)$ is determined by two maps, $\phi_1:\Res_{G_{n-1}}^{G_n} W \rightarrow \Sigma M(W)_{n-1}$ and $\phi_2:W \rightarrow \Sigma M(W)_n$. It is clear from definition that $M(W)_{n-1} = \Res_{G_{n-1}}^{G_n} W$, and so we may choose $\phi_1$ to be the identity. Once again applying the definition of the shift functor, we note that the only pure tensors $w \otimes (f,g) \in \Sigma M(W)_n$ which are not in the image of a transition map are those for which $f^{-1}(n+1) = \emptyset$. Looking at the collection of all such pure tensors, we find that they form a copy of $W$ in $\Sigma M(W)_n$. We define
\[
\phi_2(w) = w \otimes (f_n,\mathbf{1})
\]
where $f_n:[n] \rightarrow [n+1]$ is the standard inclusion, and $\mathbf{1}$ is the trivial map into $G$.\\
\end{proof}

One remarkable fact about the shift functor, first observed by Nagpal in \cite[Theorem A]{N}, is that all finitely generated $\C$-modules are ``eventually'' $\sharp$-filtered.\\

\begin{theorem}[\cite{N}, Theorem A]
Let $V$ be a finitely generated $\C$-module. Then $\Sigma_b V$ is $\sharp$-filtered for $b \gg 0$.\\
\end{theorem}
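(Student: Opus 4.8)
## Proof proposal

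The plan is to reduce the statement to a finiteness property of the generating and torsion degrees under iterated shifts, and then invoke Theorem~\ref{homacyclic}. Recall that by Theorem~\ref{homacyclic} a finitely generated $\C$-module $F$ is $\sharp$-filtered if and only if $H_1(F) = 0$, so it suffices to show that $H_1(\Sigma_b V) = 0$ for $b \gg 0$. The first step is to understand how $H_i$ interacts with $\Sigma$. Using Proposition~\ref{genetic} on a finite stage of a projective resolution (or a resolution by finitely generated $\sharp$-filtered modules), one sees that $\Sigma$ takes $\sharp$-filtered modules to $\sharp$-filtered modules and is exact, so it descends to the derived category and there is a natural comparison between $H_i(\Sigma_b V)$ and a ``shifted'' version of $H_i(V)$. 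More precisely, I would establish that $\hd_i(\Sigma_b V)$ is eventually constant in $b$ once $b$ is large, and in fact that $\gd(\Sigma_b V) \leqslant \gd(V)$, so the generating degrees do not grow.

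The heart of the argument is the standard two-step induction used in this area. First I would treat the torsion-free case: if $V$ is torsion free, then the natural map $V \to \Sigma V$ is injective, with cokernel $DV$ of strictly smaller generating degree (this is the derivative, introduced in \cite{CE}; one checks $\gd(DV) < \gd(V)$ whenever $V$ is generated in positive degree). Iterating and using the long exact sequence in homology together with induction on $\gd(V)$, one concludes that a torsion-free $V$ becomes $\sharp$-filtered after finitely many shifts. Second, for a general finitely generated $V$, one uses the canonical exact sequence $0 \to V_T \to V \to V_F \to 0$ with $V_T$ torsion and $V_F$ torsion free. Since $V_T$ is a finitely generated torsion module it is finitely supported, hence $\Sigma_b V_T = 0$ for $b \gg 0$; applying $\Sigma_b$ to the sequence (exactness of $\Sigma$) gives $\Sigma_b V \cong \Sigma_b V_F$ for large $b$, and the torsion-free case finishes the proof.

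The main obstacle is the torsion-free step, specifically controlling the homology of the cokernels of the successive maps $\Sigma_b V \to \Sigma_{b+1} V$ and ensuring the induction on $\gd$ actually terminates: one needs that $D$ strictly drops the generating degree (so the induction is well-founded) and that $H_1$ of the whole tower vanishes once all the ``lower'' pieces have been cleared. This requires a careful bookkeeping argument with the long exact sequences $\cdots \to H_i(\Sigma_b V) \to H_i(\Sigma_{b+1} V) \to H_i(D\Sigma_b V) \to \cdots$, showing the connecting maps eventually become isomorphisms in the relevant range, so that $H_1(\Sigma_b V)$ stabilizes and is forced to be $0$. A clean way to package this is to prove the slightly stronger quantitative statement that $\Sigma_b V$ is $\sharp$-filtered for all $b \geqslant \max\{\td(V_T)+1,\ \text{something linear in } \gd(V)\}$, which makes the induction hypothesis strong enough to carry through; the precise constant is not needed for the qualitative theorem but keeping track of it is what makes the inductive step go.
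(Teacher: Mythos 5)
The statement you are trying to prove is not actually proved anywhere in this paper: it is imported as a black-box citation of Nagpal, Theorem~A of \cite{N}. So there is no in-paper proof to compare against, and your argument must stand on its own.

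Your reduction to the torsion-free case is sound: $\Sigma$ is exact and $V_T$ is finitely supported by Lemma~\ref{fintor}, so $\Sigma_b V \cong \Sigma_b V_F$ once $b > \td(V_T)$. The gap is in the torsion-free step. You induct on generating degree via the derivative and the short exact sequences $0 \to \Sigma_c V \to \Sigma_{c+1} V \to D\Sigma_c V \to 0$, and you correctly observe that once $D\Sigma_c V$ (which has strictly smaller generating degree) has become $\sharp$-filtered, the maps $H_1(\Sigma_c V) \to H_1(\Sigma_{c+1} V)$ induced by $\tau$ are isomorphisms. But ``stabilizes'' is not ``vanishes'', and you assert without justification that the stable value ``is forced to be $0$''. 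That forcing is precisely the content of Nagpal's theorem, and nothing in the bookkeeping you describe produces it: the stable module is a finitely generated torsion module of bounded torsion degree, and the long exact sequence gives no mechanism for that degree to drop. To close the argument you would need a genuinely new quantitative input -- for instance a lemma of the shape $\td(H_1(\Sigma V)) < \td(H_1(V))$ whenever $V$ is torsion free and $H_1(V) \neq 0$, or a direct inductive construction of a filtration on $\Sigma_b V$ -- and no such ingredient is supplied. A secondary concern is circularity: your opening move invokes Theorem~\ref{homacyclic} (the equivalence of $H_1 = 0$ and $\sharp$-filtered), which is itself cited from \cite{LY} and \cite{R}; one must check that the direction you use does not secretly pass through Nagpal's shift theorem in those sources.
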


One major consequence of this theorem is the phenomenon of the stable range. If $k$ is a field, and $W$ is a finite dimensional $kG_n$-module, then one may easily compute
\[
\dim_k(M(W)_m) = \binom{m}{n}\dim_k W.
\]
Nagpal's theorem therefore implies the following.\\

\begin{corollary}
Let $V$ be a finitely generated $\C$-module over a field $k$. Then there is a polynomial $P_V \in \Q[x]$ such that for all $n \gg 0$, $\dim_k V_n = P_V(n)$.\\
\end{corollary}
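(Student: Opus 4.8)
The plan is to combine Nagpal's theorem with the explicit dimension formula for basic filtered modules, and to extract the polynomial from the large-degree behaviour. First I would apply Nagpal's theorem (the one immediately preceding this corollary) to obtain an integer $b \geqslant 0$ such that $\Sigma_b V$ is $\sharp$-filtered. Since $\Sigma_b V$ is finitely generated and $\sharp$-filtered, its defining filtration has finitely many nonzero cofactors, each of the form $M(W^{(i)})$ with $W^{(i)}$ a finite-dimensional $kG_i$-module; the filtration being finite, $\dim_k((\Sigma_b V)_m) = \sum_i \binom{m}{i}\dim_k W^{(i)}$ for every $m$, which is a polynomial $Q(m) \in \Q[x]$ in $m$ (a $\Q$-linear combination of the binomials $\binom{x}{i}$, which themselves lie in $\Q[x]$).

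Next I would unwind the definition of $\Sigma_b$ to relate this back to $V$ itself. By definition $(\Sigma_b V)_m = V_{m+b}$, so $\dim_k V_{m+b} = Q(m)$ for all $m \geqslant 0$. Setting $P_V(x) := Q(x - b) \in \Q[x]$, we get $\dim_k V_n = P_V(n)$ for all $n \geqslant b$, which is the desired conclusion with the stable range $n \gg 0$ made explicit as $n \geqslant b$.

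I do not expect any serious obstacle here; the corollary is a direct packaging of Nagpal's theorem together with the stated formula $\dim_k(M(W)_m) = \binom{m}{n}\dim_k W$. The only point requiring a word of care is that the $\sharp$-filtration of $\Sigma_b V$ is finite with finitely generated (hence finite-dimensional, since $k$ is a field) cofactors — this is exactly the content of ``finitely generated $\sharp$-filtered'' as defined in the paper, and $\Sigma_b V$ inherits finite generation from $V$ because $\Sigma$ preserves finitely generated modules (Proposition \ref{genetic} together with exactness of $\Sigma$, or more directly since the Noetherian property guarantees submodules of finitely generated modules are finitely generated). One should also note that a finite sum of the form $\sum_i \binom{m}{i} d_i$ genuinely agrees with a polynomial for all $m \geqslant 0$, not merely asymptotically, since $\binom{m}{i}$ is given by the polynomial $\binom{x}{i}$ on all of $\mathbb{Z}_{\geqslant 0}$; thus the only shift between ``all $m$'' and ``$n \gg 0$'' comes from the passage through $\Sigma_b$.
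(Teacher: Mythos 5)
Your proposal is correct and follows precisely the approach the paper indicates (it states the dimension formula $\dim_k M(W)_m = \binom{m}{n}\dim_k W$ and then says ``Nagpal's theorem therefore implies the following''). Everything is in order: the filtration of $\Sigma_b V$ is finite by definition of $\sharp$-filtered, the cofactors are finite-dimensional since $k$ is a field, dimensions add across the filtration giving a $\Q$-linear combination of polynomials $\binom{x}{i}$, and the identity $(\Sigma_b V)_m = V_{m+b}$ yields the shift $P_V(x) = Q(x-b)$.
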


\begin{definition}\label{nagnum}
Let $V$ be a finitely generated $\C$-module. The smallest $b$ for which $\Sigma_b V$ is $\sharp$-filtered is known as the \textbf{Nagpal number} of $V$, and is denoted $N(V)$.\\
\end{definition}

The paper \cite[Theorem C]{R} examines the Nagpal number from the perspective of a theory of depth. In this work, the second author provides bounds on $N(V)$ in terms of the generating degree and the first homological degree. Similar bounds were later found by the first author in \cite[Theorem 1.3]{L2} using different means. We will examine the notion of depth in the coming sections, and its connection to local cohomology.\\

\subsection{The coinduction functor}

\begin{definition}
If $V$ is a $\C$-module, then we define the \textbf{coinduction functor} $R:\C\Mod \rightarrow \C\Mod$
\[
R(V)_n := \Hom_{k\C}(\Sigma M(n),V).
\]
If $(f,g):[n] \rightarrow [m]$ is a morphism in $\C$, and $\phi: \Sigma M(n) \rightarrow V$ is a morphism of $\C$-modules, then we define $(f,g)_\as \phi \in \Hom_{k\C}(\Sigma M(m),V)$ by
\[
((f,g)_\as \phi)_r(f',g') = \phi_r( (f',g') \circ (f,g))
\]
where $(f',g'):[m] \rightarrow [r+1]$.\\
\end{definition}

\begin{remark}
Because it will be useful to us later, we note that the coinduction functor is exact. Indeed, we have already seen that the shift functor preserves projective objects, and that $M(n)$ is projective for all $n$. This implies that $\Hom_{k\C}(\Sigma M(n), \bullet)$ is exact for all $n$, whence $R$ is exact.\\
\end{remark}

\begin{proposition}[\cite{GL}, Lemma 4.2]
The coinduction functor is right adjoint to the shift functor.\\
\end{proposition}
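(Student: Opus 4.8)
The statement to prove is the adjunction $\Hom_{\C\Mod}(\Sigma V, W) \cong \Hom_{\C\Mod}(V, R(W))$, i.e.\ that $R$ is right adjoint to $\Sigma$. My plan is to construct the unit and counit of the adjunction explicitly and verify the triangle identities, using the fact that $\Sigma$ has a left adjoint $L$ (induction) only insofar as it is convenient; in fact the cleanest route avoids $L$ altogether and works directly with the definition of $R(W)_n = \Hom_{k\C}(\Sigma M(n), W)$. First I would reduce to the case $V = M(n)$: since every finitely generated $\C$-module is a quotient of a finite sum of free modules $M(n)$, and both $\Hom_{\C\Mod}(\Sigma(-), W)$ and $\Hom_{\C\Mod}(-, R(W))$ are left-exact contravariant functors in the first variable (the first because $\Sigma$ is exact by Proposition~\ref{genetic}'s remark and $\Hom$ is left exact; the second trivially), it suffices to produce an isomorphism that is natural on the free modules. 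Actually, the right framework is Yoneda: by Proposition~\ref{yoneda}, $\Hom_{\C\Mod}(M(n), U) \cong \Hom_{kG_n\Mod}(kG_n, U_n) \cong U_n$ naturally in $U$, so $\Hom_{\C\Mod}(M(n), R(W)) \cong R(W)_n = \Hom_{k\C}(\Sigma M(n), W)$ by the very definition of $R$.

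So the core of the proof is to identify $\Hom_{k\C}(\Sigma M(n), W)$ with $\Hom_{\C\Mod}(\Sigma M(n), W)$ — which is immediate since these are literally the same thing ($k\C$-module maps are $\C$-module maps) — and then to check that the resulting bijection
\[
\Hom_{\C\Mod}(M(n), R(W)) \cong \Hom_{\C\Mod}(\Sigma M(n), W)
\]
is natural in $n$, meaning it intertwines the induced maps $(f,g)_\as$ on both sides. This is where the explicit formula $((f,g)_\as \phi)_r(f',g') = \phi_r((f',g')\circ(f,g))$ in the definition of the $\C$-module structure on $R(W)$ is designed precisely to make the square commute: postcomposition-in-$R(W)$ corresponds to precomposition-with-$\iota(f,g)=(f_+,g_+)$ on $\Sigma M(n)$, and unwinding the definition of $\iota$ together with how $M(n)_m = k[\Hom_\C([n],[m])]$ transforms shows these agree. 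Once naturality in $n$ is established, the general statement follows: write $V$ with a presentation $\bigoplus_j M(m_j) \to \bigoplus_i M(n_i) \to V \to 0$, apply $\Sigma$ (exact), apply both $\Hom$-functors, and use the five lemma / left-exactness to transfer the isomorphism from the free pieces to $V$.

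The main obstacle — really the only place requiring care — is the naturality/compatibility bookkeeping: one must check that the chain of identifications $\Hom_{\C\Mod}(M(n), R(W)) \to R(W)_n = \Hom_{k\C}(\Sigma M(n), W) \to \Hom_{\C\Mod}(\Sigma M(n), W)$ respects the $\C$-module transition maps on the left (from the $\C$-module $R(W)$, via Yoneda) and the corresponding maps $\Sigma M(n) \to \Sigma M(m)$ on the right (coming from $M(n) \to M(m)$ and then applying $\Sigma$). Concretely one fixes a morphism $(f,g) \colon [n] \to [m]$ and a generator, traces it through both composites, and confirms equality using the defining formula for the $R(W)$-action and the definition of $f_+, g_+$; the potential pitfall is a sign or ordering error in the wreath-product composition law $h(x) = g'(x)g(f'(x))$, so I would be explicit about which side inductions/restrictions act on. There is no deep content here — the definition of $R$ was reverse-engineered from this adjunction — so I expect the proof to be a short, purely formal verification, and I would likely present it as: "This is immediate from the definition of $R$ together with Proposition~\ref{yoneda} and the exactness of $\Sigma$; we spell out the unit and counit for later use," followed by the explicit natural transformations $V \to R(\Sigma V)$ and $\Sigma R(W) \to W$.
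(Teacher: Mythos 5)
The paper does not prove this proposition---it is stated with a citation to \cite{GL}, Lemma 4.2, and no argument is given---so there is no in-paper proof to compare against. Your argument is correct as a free-standing proof. The reduction to $V = M(n)$ via Proposition~\ref{yoneda}, the observation that
\[
\Hom_{\C\Mod}(M(n), R(W)) \cong R(W)_n = \Hom_{k\C}(\Sigma M(n), W)
\]
by the very definition of $R$, the explicit check that the defining formula $((f,g)_\as\phi)_r(f',g') = \phi_r((f',g')\circ(f,g))$ makes this identification intertwine the $\C$-module structure on $R(W)$ with precomposition by $\Sigma$ of maps $M(n)\to M(m)$, and the extension to general $V$ by a free presentation using exactness of $\Sigma$ and left-exactness of both $\Hom$-functors---all of this is sound.

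It is worth contrasting with the route the paper takes for the companion statement $L\dashv\Sigma$ in Proposition~\ref{leftadjoint}: there one exhibits $\Sigma$ explicitly as $\Hom_{k\C}(k\C_+,-)$ and invokes the tensor-hom adjunction. The direct analogue for $\Sigma\dashv R$ would be to exhibit $\Sigma$ as $B\otimes_{k\C}-$ for the $(k\C,k\C)$-bimodule $B=\bigoplus_n \Sigma M(n)$ and note that $R=\Hom_{k\C}(B,-)$ by construction; your Yoneda reduction is precisely the degree-by-degree co-Yoneda computation identifying $B\otimes_{k\C} V$ with $\Sigma V$. Both routes are short; yours has the advantage of being entirely self-contained given Proposition~\ref{yoneda}. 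One small caution: the adjunction holds for all $\C$-modules, not just finitely generated ones, so the free presentation in your final step should be allowed to have infinite index sets---this causes no difficulty since $\Sigma$ commutes with arbitrary direct sums and $\Hom$ turns them into products on both sides.
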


The coinduction functor was introduced by Gan and the first author in \cite{GL}. Theorem \ref{coind} generalizes Theorem 1.3 of that paper. We will eventually use this more general result to prove that depth and classical depth agree for $\C$-modules over any Noetherian ring (see Theorem \ref{equivdepth}).\\

\begin{lemma}\label{coindlem1}
Let $W$ be a finitely generated $kG_n$-module. Then $M(W)$ is a summand of $R(M(W))$.
\end{lemma}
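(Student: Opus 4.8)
The plan is to exhibit $M(W)$ as both a subobject and a quotient of $R(M(W))$ in a compatible way, so that the identity on $M(W)$ factors through $R(M(W))$. The natural tool is the adjunction between $\Sigma$ and $R$ from \cite{GL}. First I would recall that by Theorem \ref{bigrepthm}(2) (or directly from Proposition \ref{genetic} together with the construction of $R$) there is a decomposition
\[
R(M(W)) \cong M(W) \oplus M(\Ind_{G_n}^{G_{n+1}} W),
\]
but since the point of the lemma is presumably to set up the proof of that very formula, I would instead argue more directly. Using the adjunction $\Hom_{\C\Mod}(\Sigma V, M(W)) \cong \Hom_{\C\Mod}(V, R(M(W)))$, the identity map on $M(W)$ is \emph{not} immediately what we want; rather, I would consider the counit and unit of the adjunction.

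The cleaner route: there is a natural transformation $V \to \Sigma V$ for every $\C$-module (Definition \ref{dervdef}, the map whose cokernel is the derivative), and dually a natural transformation $\Sigma R(V) \to V$ and $V \to R(V)$ hidden in the adjunction data. Concretely, I would produce a split injection $M(W) \hookrightarrow R(M(W))$ as follows. By Proposition \ref{yoneda} applied on the $\Sigma$-side, a map $M(W) \to R(M(W))$ corresponds under the $(\Sigma, R)$-adjunction to a map $\Sigma M(W) \to M(W)$; and by Proposition \ref{genetic}, $\Sigma M(W) \cong M(\Res W) \oplus M(W)$, which has $M(W)$ as a direct summand, so the projection $\Sigma M(W) \twoheadrightarrow M(W)$ is a canonical such map. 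Adjointly this yields a morphism $\alpha : M(W) \to R(M(W))$. For the splitting, I would construct $\beta : R(M(W)) \to M(W)$ from the counit $\Sigma R(M(W)) \to M(W)$ precomposed with the canonical inclusion $M(W) \hookrightarrow \Sigma M(W)$ (the $\phi_2$-map of Proposition \ref{genetic}), giving via adjunction a map out of... actually here one should go the other way: the counit $\varepsilon : \Sigma R(M(W)) \to M(W)$ is already a map into $M(W)$, and restricting it along the summand inclusion $R(M(W)) \hookrightarrow \Sigma R(M(W))$ coming from Proposition \ref{genetic} applied to the $\sharp$-filtered module $R(M(W))$ gives $\beta : R(M(W)) \to M(W)$.

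The remaining task is to check that $\beta \circ \alpha = \mathrm{id}_{M(W)}$. This is a triangle-identity computation: unwinding the definitions of $\alpha$ (from the projection $\Sigma M(W) \to M(W)$) and $\beta$ (from the counit $\varepsilon$ and the summand inclusion), the composite $\beta\alpha$ should reduce, via one of the triangle identities for the adjunction $(\Sigma, R)$ and the explicit description of the summand inclusions/projections in Proposition \ref{genetic}, to the identity on $M(W)$. I expect this verification to be the main obstacle: it requires tracking the explicit formulas for the coinduction functor's action on morphisms together with the concrete inclusion $\phi_2(w) = w \otimes (f_n, \mathbf{1})$ from the proof of Proposition \ref{genetic}, and making sure the two summand decompositions (one of $\Sigma M(W)$, one of $\Sigma R(M(W))$) are being used consistently. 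An alternative that sidesteps the bookkeeping: since $R$ is exact (noted in the excerpt) and $M(W)$ is $\sharp$-filtered, so is $R(M(W))$ by Theorem \ref{bigrepthm}(2); then it suffices to check the splitting after applying $H_0$ or after evaluating at each degree $m$, where everything becomes a concrete statement about $kG_m$-modules and Frobenius reciprocity, namely that $\Res_{G_m}^{G_{m+1}} \Ind$ contains the original module as a summand — which follows from Mackey's formula. I would likely present the adjunction argument as the main proof and mention the evaluationwise check as a sanity verification.
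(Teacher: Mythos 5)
You correctly frame the problem as producing a section $\alpha : M(W) \to R(M(W))$ and a retraction $\beta : R(M(W)) \to M(W)$ with $\beta\alpha = \mathrm{id}$, and your $\alpha$ (adjoint to the projection $\Sigma M(W) \twoheadrightarrow M(W)$) agrees with the paper's, which is obtained via Proposition \ref{yoneda} from the identity on $W \cong R(M(W))_n$. The problems are in $\beta$. First, a circularity: you take the inclusion $R(M(W)) \hookrightarrow \Sigma R(M(W))$ ``from Proposition \ref{genetic} applied to the $\sharp$-filtered module $R(M(W))$,'' and your Mackey-formula alternative invokes Theorem \ref{bigrepthm}(2) to say $R(M(W))$ is $\sharp$-filtered --- but that $R(M(W))$ is $\sharp$-filtered is precisely what Theorem \ref{coind} (which this lemma feeds into) is going to prove, so neither is available here. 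The fix is to use instead the map $\tau_{R(M(W))} : R(M(W)) \to \Sigma R(M(W))$ of Definition \ref{dervdef}, which exists for every module; the proof of Proposition \ref{genetic} identifies $\tau_{M(W)}$ with the split inclusion $\phi_2$, so this is the correct unconditional replacement.

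Second, and this is the genuine gap, you never carry out the verification that $\beta\alpha = \mathrm{id}$; you flag it as ``the main obstacle'' and describe what would have to be tracked, but stop. With the fix above it does go through as a formal chase --- naturality of $\tau$, naturality of the counit $\varepsilon$, one triangle identity, and finally $p \circ \tau_{M(W)} = \mathrm{id}$ from Proposition \ref{genetic} --- but that chain of identities is exactly the content of the lemma and has to appear. The paper sidesteps all of this by writing the retraction down explicitly: for $\phi \in R(M(W))_r = \Hom_{k\C}(\Sigma M(r), M(W))$ it sets $\psi_r(\phi) = \phi_r(f_r,\mathbf{1})$, verifies in a single displayed computation that $\psi$ is a $\C$-module morphism, and then reads off that $\psi$ inverts $\alpha$ directly from the identification $R(M(W))_n \cong W$ given by $\phi \mapsto \phi_n(f_n,\mathbf{1})$. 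That concrete formula is shorter and avoids the bookkeeping you are worried about; I would adopt it rather than the adjunction-theoretic packaging.
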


\begin{proof}
We will construct a split injection $M(W) \rightarrow R(M(W))$. Proposition \ref{yoneda} tells us that such a map is equivalent to a map $W \rightarrow R(M(W))_n = \Hom_{k\C}(\Sigma M(n),M(W))$. Proposition \ref{genetic} tells us that
\[
\Sigma M(n) \cong M(n-1)^{|G|\cdot n} \oplus M(n),
\]
and therefore
\[
\Hom_{k\C}(\Sigma M(n),M(W)) \cong \Hom_{k\C}(M(n),M(W)) \cong W.
\]
Being explicit, the isomorphism $\Hom_{k\C}(\Sigma M(n),M(W))  \cong W$ is given by
\[
\phi \mapsto \phi_n(f_n,\mathbf{1})
\]
where $f_n:[n] \rightarrow [n+1]$ is the standard inclusion, and $\mathbf{1}$ is the trivial map into $G$. We claim that the map $M(W) \rightarrow R(M(W))$ induced by the identity on $W$ is a split injection.\\

To prove the claim, we will construct a splitting $\psi:R(M(W)) \rightarrow M(W)$. Indeed, for a morphism $\phi:\Sigma M(r) \rightarrow M(W)$ we set
\[
\psi_r(\phi) = \phi_r(f_r,\mathbf{1}),
\]
where $f_r:[r] \rightarrow [r+1]$ is the standard inclusion, and $\mathbf{1}$ is the trivial map into $G$. The fact that $\psi$ defines a morphism of $\C$-modules is routinely checked. In fact, if $(f,g):[r]\rightarrow [s]$ is any map in $\C$ then,
\[
(f,g)_\as (\psi_r(\phi)) = (f,g)_\as \phi_r(f_r,\mathbf{1}) = \phi_s( (f_+,g_+) \circ (f_r,\mathbf{1})) = \phi_s( (f_s,\mathbf{1}) \circ (f,g)) = ((f,g)_\as\phi)_s(f_s,\mathbf{1}) = \psi_s((f,g)_\as\phi)
\]
By the discussion in the previous paragraph, it is clear that $\psi$ is a splitting of our map.\\
\end{proof}

This is all we need to prove the main theorem of this section.\\

\begin{theorem}\label{coind}
Let $W$ be a finitely generated $kG_n$-module. Then $R(M(W))$ is a direct sum of basic filtered modules. More specifically,
\[
R(M(W)) \cong M(W) \oplus M(\Ind_{G_n}^{G_{n+1}}W).
\]
\end{theorem}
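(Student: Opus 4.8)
The plan is to compute the coinduction $R(M(W))$ explicitly by unwinding the two defining adjunctions and Proposition \ref{genetic}. First I would record the decomposition $R(M(W)) \cong M(W) \oplus X$ from Lemma \ref{coindlem1}, so that the entire problem reduces to identifying the complementary summand $X$. Since $R$ is exact and $M(W)$ is projective whenever $W$ is projective, and since every $kG_n$-module admits a free resolution, one strategy is to first verify the claimed formula when $W = kG_n$ is free, then deduce the general case by applying $R$ to a free resolution of $W$ and using exactness together with the right-exactness of $M$ and $\Ind$; however, the cleanest route is probably a direct one, so I would instead aim to produce an explicit isomorphism.

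The key computational input is that $R(M(W))_m = \Hom_{k\C}(\Sigma M(m), M(W))$, and by Proposition \ref{genetic} we have $\Sigma M(m) \cong M(m) \oplus M(m-1)^{\oplus |G|\cdot m}$, hence
\[
R(M(W))_m \cong \Hom_{k\C}(M(m), M(W)) \oplus \Hom_{k\C}(M(m-1), M(W))^{\oplus |G|\cdot m} \cong M(W)_m \oplus \big(\Res^{G_m}_{G_{m-1}} M(W)_m\big)^{\oplus |G|\cdot m},
\]
using Proposition \ref{yoneda} for each Hom-group. The first summand is already accounted for by $M(W)$; the task is to recognize the second summand, functorially in $m$, as $M(\Ind_{G_n}^{G_{n+1}} W)_m$. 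For this I would compute $M(\Ind_{G_n}^{G_{n+1}} W)_m = k[\Hom_\C([n+1],[m])] \otimes_{kG_{n+1}} \Ind_{G_n}^{G_{n+1}} W \cong k[\Hom_\C([n+1],[m])] \otimes_{kG_n} W$, and then exhibit a bijection between a $k$-basis of $\Hom_\C([n+1],[m])$ and the data parametrizing the "extra" pure tensors in $\Sigma M(m)$ that are not hit by transition maps — essentially the morphisms $[n] \to [m]$ decorated by the extra slot landing somewhere in $[m]$, which is what the $|G| \cdot m$ factor is tracking.

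The main obstacle I anticipate is bookkeeping the $G$-equivariance and making sure the maps assembled degree-by-degree actually commute with the induced maps of $R(M(W))$ as described in the definition of the coinduction functor — i.e., checking that the isomorphism is a morphism of $\C$-modules, not merely a degreewise $k$-linear (or even $kG_m$-linear) isomorphism. This is the same kind of routine-but-delicate verification carried out in the proof of Lemma \ref{coindlem1} with the splitting $\psi$, and I would model the argument on that computation: write down the candidate map on the "new" generators $\phi \mapsto \phi_n(f_n, \mathbf 1)$-type formulas, extend by the universal property, and verify compatibility with a general morphism $(f,g)$ by the same chain of identities $\phi_s((f_+,g_+)\circ(-)) = \phi_s((-)\circ(f,g))$ used there. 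Once equivariance is settled, the fact that the complement is exactly $M(\Ind_{G_n}^{G_{n+1}} W)$ rather than some other $\sharp$-filtered module follows from matching generating degrees (everything is generated in degree $n+1$) together with Nakayama's Lemma (Proposition \ref{nak}) and Proposition \ref{yoneda}.
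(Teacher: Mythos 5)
Your proposal takes a different route from the paper's, which first applies the exact functor $R$ to a presentation $0 \to M(Z) \to M(n)^m \to M(W) \to 0$ and invokes the free case $R(M(n)) \cong M(n)\oplus M(n+1)$ from \cite[Theorem 1.3]{GL} to show $R(M(W))$ is generated in degrees $n,n+1$ with $\hd_1(R(M(W))) \leq n+1$; together with Lemma \ref{coindlem1} this produces a split complement $Q$ generated in degree $n+1$ with $\hd_1(Q) \leq n+1$, so that \cite[Corollary 3.4]{LY} forces $Q$ to be basic filtered, after which a single degree-$(n+1)$ computation identifies it. Your alternative of unwinding $R(M(W))_m = \Hom_{k\C}(\Sigma M(m), M(W))$ degree by degree is not unreasonable, but it contains a concrete error: by Proposition \ref{yoneda} one has $\Hom_{k\C}(M(m-1),M(W)) \cong M(W)_{m-1}$, not $\Res^{G_m}_{G_{m-1}} M(W)_m$. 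The latter is a different module of the wrong size (over a field it has dimension $\binom{m}{n}\dim_k W$ rather than $\binom{m-1}{n}\dim_k W$), so with your displayed formula the dimension count against $M(\Ind_{G_n}^{G_{n+1}}W)_m$ does not close.

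The more substantive gap is in your closing step. Concluding that the complement equals $M(\Ind_{G_n}^{G_{n+1}}W)$ "by matching generating degrees together with Nakayama's Lemma and Proposition \ref{yoneda}" presupposes that the complement is already known to be $\sharp$-filtered; a finitely generated module concentrated in generating degree $n+1$ need not be basic filtered. In the paper this is exactly where the homological input \cite[Corollary 3.4]{LY}, fed the bound $\hd_1(Q)\leq n+1$ coming from the $H_0$ long exact sequence, carries the weight, and nothing in your sketch replaces it. If instead you intend to build a map $M(W)\oplus M(\Ind_{G_n}^{G_{n+1}}W)\to R(M(W))$ via Proposition \ref{yoneda}, Frobenius reciprocity, and Lemma \ref{coindlem1} and verify it is a degreewise bijection, that is a viable second route which sidesteps $\sharp$-filteredness entirely — but it is then incompatible with the Nakayama-style identification your final sentence appeals to. As written the proposal conflates these two strategies and carries out neither.
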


\begin{proof}
We may find some integer $m$ such that there is an exact sequence
\[
0 \rightarrow M(Z) \rightarrow M(n)^m \rightarrow M(W) \rightarrow 0
\]
where $Z$ is some $kG_n$-module. Applying the exact coinduction functor, and using \cite[Theorem 1.3]{GL}, we obtain an exact sequence
\[
0 \rightarrow R(M(Z)) \rightarrow M(n)^m \oplus M(n+1)^m \rightarrow R(M(W)) \rightarrow 0
\]
It follows from this that $\hd_1(R(M(W))) \leq n+1$, and that $R(M(W))$ is generated in degrees $n$ and $n+1$.\\

Lemma \ref{coindlem1} tells us that the submodule of $R(M(W))$ generated by $R(M(W))_n \cong W$ is precisely $M(W)$. That is, there is a split exact sequence
\[
0 \rightarrow M(W) \rightarrow R(M(W)) \rightarrow Q \rightarrow 0
\]
for some module $Q$ generated in degree exactly $n+1$. Applying the $H_0$ functor to this sequence, we find that
\[
H_1(R(M(W))) \rightarrow H_1(Q) \rightarrow H_0(M(W))
\]
from which it follows that $\hd_1(Q) \leq n+1$. The argument of \cite[Corollary 3.4]{LY} now implies that $Q$ is actually a basic filtered module $Q \cong M(U)$ for some $kG_{n+1}$-module $U$.\\

Now we complete the proof by showing that $U \cong \Ind_{G_n}^{G_{n+1}}W$. By considering the value of $R(M(W))$ on the object $n+1$, we have
\begin{align*}
R((M(W))_{n+1} &= \Hom_{k\C} (\Sigma(M(n+1)), M(W))\\
& \cong \Hom_{k\C} (M(n+1) \oplus M(n)^{(n+1)|G|}, M(W))\\
& \cong M(W)_{n+1} \oplus M(W)_n^{(n+1)|G|}
\end{align*}
The endomorphism group $G_{n+1}$ acts transitively on the $(n+1)|G|$ copies of $M(W)_n \cong W$. Therefore, as a left $kG_{n+1}$-module, $R(W)_{n+1}$ is a direct sum of $W_{n+1}$ and $\Ind _{G_n}^{G_{n+1}} W_n$. Note that these two direct summands are actually isomorphic since
\begin{equation*}
M(W)_{n+1} \cong k[\Hom_{\C}(n, n+1)] \otimes _{kG_n} W \cong kG_{n+1} \otimes _{kG_n} W,
\end{equation*}
where $k[\Hom_{\C}(n, n+1)]$ as a $(kG_{n+1}, kG_n)$-bimodule is isomorphic to $kG_{n+1}$. But we have
\begin{equation*}
R(W)_{n+1} \cong W_{n+1} \oplus U.
\end{equation*}
Since we already know that $R(W)_{n+1}$ is a direct sum of two copies of the induced module, it forces $U$ to be isomorphic to the induced module.\\
\end{proof}

\subsection{The induction functor and the proof of Theorem \ref{bigrepthm}}

We now spend some time considering the left adjoint of the shift functor. For our purposes, its most important property will be related to the Eckmann-Shapiro Lemma (Proposition \ref{EckSha}).\\

Unlike the coinduction functor, we will see that the shift functor's left adjoint cannot be easily expressed in the language of $\C$-modules. We will therefore present this functor entirely in the language of $k\C$-modules.\\

\begin{definition}
Let $V$ be a $k\C$-module. Set $\C_+$ to be the full subcategory of $\C$ whose objects are the sets $[n]$ with $n > 0$, and let $k\C_+$ be the corresponding subalgebra of $k\C$. Then we define the \textbf{Induction functor} $L$ as the $k\C$-module,
\[
L(V) := k\C_+ \otimes_{k\C} V
\]
where here we consider $k\C_+$ as a $k\C$-bimodule via normal multiplication on the left, and via the self-embedding on the right.\\
\end{definition}

\begin{proposition}\label{leftadjoint}
The induction functor is left adjoint to the shift functor.\\
\end{proposition}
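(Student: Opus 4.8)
The plan is to recognize $\Sigma$ as a restriction-of-scalars functor and then apply the tensor--hom adjunction. First I would record the observation, immediate from Definition \ref{shift}, that the shift functor is restriction of scalars along the self-embedding $\iota\colon k\C \to k\C$: the $k\C$-module $\Sigma W$ has underlying $k$-module $W$ with $a \in k\C$ acting as $\iota(a)$, so that its degree-$n$ part is $\iota(e_n)W = e_{n+1}W = W_{n+1}$ and a morphism $(f,g)$ acts through $\iota(f,g) = (f_+,g_+)$, in agreement with $(\Sigma W)_n = W_{n+1}$. To motivate the appearance of $k\C_+$ I would also note that $k\C_+ = k\C\cdot\big(\textstyle\sum_{r\geqslant 1}e_r\big) = k\C\cdot\big(\textstyle\sum_{n\geqslant 0}\iota(e_n)\big)$ is exactly the part of $k\C$ on which the idempotents $\{\iota(e_n)\}_{n\geqslant 0}$ act as a complete system on the right, which is why $k\C_+$, and not $k\C$ itself, is the bimodule that appears in the definition of $L$.

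Next I would apply the tensor--hom adjunction to the $(k\C,k\C)$-bimodule $k\C_+$, whose left action is the usual multiplication and whose right action is twisted by $\iota$: for all $k\C$-modules $V$ and $W$ there is a natural isomorphism
\[
\Hom_{k\C}\big(k\C_+\otimes_{k\C}V,\,W\big)\;\cong\;\Hom_{k\C}\big(V,\,\Hom_{k\C}(k\C_+,W)\big),
\]
where $\Hom_{k\C}(k\C_+,W)$ is made into a left $k\C$-module via the $\iota$-twisted right action on $k\C_+$. Since $L(V) = k\C_+\otimes_{k\C}V$, the proposition reduces to producing a natural isomorphism of $k\C$-modules $\Hom_{k\C}(k\C_+,W)\cong\Sigma W$.

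For the last step I would decompose $k\C_+$ as a right $k\C$-module along $\iota$: writing $\cdot_\iota$ for the twisted action, $k\C_+ = \bigoplus_{n\geqslant 0}k\C_+\cdot_\iota e_n = \bigoplus_{n\geqslant 0}k\C_+e_{n+1}$, and for each $n$ the left $k\C$-module $k\C_+e_{n+1}$ is simply $k\C e_{n+1} = M(n+1)$, because every morphism with source $[n+1]$ already lies in $k\C_+$. Hence the degree-$n$ component of $\Hom_{k\C}(k\C_+,W)$ is $\Hom_{k\C}(M(n+1),W)\cong W_{n+1}$ by Proposition \ref{yoneda}, matching $\Sigma W$ degree by degree. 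The one computation requiring care is checking that this identification is compatible with induced maps: the action of a morphism $(f,g)\colon[n]\to[m]$ on $\Hom_{k\C}(k\C_+,W)$ is precomposition with right multiplication by $\iota(f,g) = (f_+,g_+)$, and running this through the Yoneda isomorphism $\Hom_{k\C}(M(r),W)\cong W_r$, $\phi\mapsto\phi(e_r)$, produces exactly $(f_+,g_+)_\as\colon W_{n+1}\to W_{m+1}$, the transition map of $\Sigma W$. Assembling these isomorphisms over all $n$ gives $\Hom_{k\C}(k\C_+,W)\cong\Sigma W$ naturally in $W$, and composing with the displayed adjunction proves that $L$ is left adjoint to $\Sigma$. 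The main obstacle is really just bookkeeping: keeping the $\iota$-twist on the right $k\C$-action straight, and using the graded version of $\Hom_{k\C}(k\C_+,-)$, namely $\bigoplus_n e_n\Hom_{k\C}(k\C_+,-)$, so that the tensor--hom isomorphism is genuinely one of $k\C$-modules rather than of underlying $k$-modules.
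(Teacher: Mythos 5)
Your proposal follows the same route as the paper: both reduce the adjunction to the tensor--hom adjunction for the $(k\C,k\C)$-bimodule $k\C_+$, and both then identify $\Hom_{k\C}(k\C_+,W)$ with $\Sigma W$. Where the paper writes down the evaluation map $\phi \mapsto \phi(e_{n+1})$ and verifies directly that it is a $k\C$-module morphism, you instead decompose $k\C_+$ along the $\iota$-twisted idempotents as $\bigoplus_{n\geqslant 0} k\C_+ e_{n+1} = \bigoplus_{n\geqslant 0} M(n+1)$ and apply Proposition \ref{yoneda} degree by degree; this is the same computation seen from the Yoneda side, and it has the minor advantage of making the bijectivity of the comparison map (which the paper leaves implicit) manifest. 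Your caveat about working with the graded Hom $\bigoplus_n e_n\Hom_{k\C}(k\C_+,-)$ rather than the raw Hom is exactly the right point to flag, since the ungraded Hom would produce a product $\prod_n W_{n+1}$ rather than the graded module $\Sigma W$.
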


\begin{proof}
Let $k\C_+$ be as in the definition of the induction functor. We will prove that there is a natural isomorphism of functors
\[
\Sigma V \cong \Hom_{k\C}(k\C_+,V).
\]
Keeping this in mind, the proposition is just the usual Tensor-Hom adjunction.\\

For this proof only, set $A := \Hom_{k\C}(k\C_+,V)$. Then $A$ is a $k\C$-module. Recall that we use $e_n$ to denote the morphism of $\C$ defined by the pair of the identity on $[n]$ and the trivial map into $G$. For any $\phi \in A_n$, define
\[
\psi_V(\phi) = \phi(e_{n+1}) \in V_{n+1}.
\]
We claim that $\psi:A \rightarrow \Sigma V$ is a morphism of $k\C$-modules. Indeed, if $(f,g):[n] \rightarrow [m]$ is any morphism in $\C$, and $\phi \in A_n$, then
\[
\psi_V((f,g)_\as \phi) = ((f,g)_\as\phi)(e_{m+1}) = \phi(e_{m+1} \circ (f_+,g_+)) = (f_+,g_+)_\as \phi(e_{n+1}) = (f,g)_\as (\psi_V(\phi)).
\]
The fact that the collection of $\psi_V$, with $V$ varying, define a natural transformation of functors is easily checked.\\
\end{proof}

\begin{remark}
The induction and coinduction functors of $\C$-modules are not isomorphic. Indeed, we have already seen that $R(M(n)) \cong M(n) \oplus M(n+1)$, while Theorem \ref{bigrepthm} tells us that $L(M(n)) \cong M(n+1)$.\\
\end{remark}

We have already seen that the shift functor preserves projective $\C$-modules (Proposition \ref{genetic}). For the purposes of the Eckmann-Shapiro lemma, however, we will need to know whether it preserves right projective modules. This is indeed the case.\\

\begin{lemma}\label{rightproj}
Let $P$ be a projective right $k\C$-module. Then $\Sigma P$ is also a projective right $k\C$-module.\\
\end{lemma}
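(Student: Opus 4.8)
The plan is to reduce to the case of a free right module and then understand $\Sigma$ on free right modules explicitly, just as Proposition \ref{genetic} does for left modules. Since every projective right $k\C$-module is a summand of a free one, and $\Sigma$ is an additive functor (it is exact by the remark preceding Theorem \ref{bigrepthm}, being given by restriction along the self-embedding $\iota$), it suffices to show that $\Sigma$ carries a free right $k\C$-module to a projective right $k\C$-module. A free right $k\C$-module is a direct sum of the indecomposable right projectives $e_n \cdot k\C$ (the ``rows'' of the category algebra), so — invoking additivity once more, noting that $\Sigma$ commutes with arbitrary direct sums since it is computed degreewise — the whole statement comes down to analyzing $\Sigma(e_n k\C)$ for a single $n$.

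First I would recall that $e_n \cdot k\C = \bigoplus_{m \geqslant n} k[\Hom_\C([n],[m])]$ as a graded right $k\C$-module, where a morphism $(f,g) \colon [r] \to [m]$ acts on a basis element $\alpha \colon [n] \to [m]$ by precomposition, sending it to $\alpha \circ (f,g)$ when $r$ matches the source of $\alpha$ and to $0$ otherwise. Applying $\Sigma$ means restricting this module along $\iota$; concretely, $(\Sigma(e_n k\C))_m = (e_n k\C)_{m+1} = k[\Hom_\C([n],[m+1])]$, with a morphism $(f,g)\colon[r]\to[m]$ now acting via $(f_+,g_+)\colon [r+1]\to[m+1]$, i.e. $\alpha \mapsto \alpha \circ (f_+,g_+)$. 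The key combinatorial observation — the right-module analogue of the decomposition $\Sigma M(W) \cong M(W) \oplus M(\Res W)$ in Proposition \ref{genetic} — is that the basis $\Hom_\C([n],[m+1])$ splits according to whether the value $m+1$ is in the image of the underlying injection, and this splitting is compatible with the (shifted) right action because $(f_+,g_+)$ always sends the new point $r+1$ to $m+1$. The summand of morphisms $\alpha$ whose image misses $m+1$ is visibly isomorphic as a right $k\C$-module to $e_n k\C$ itself (shifted basis), while the summand of morphisms hitting $m+1$ decomposes further using the $|G|$ choices of $G$-label at that point and the choice of preimage, and I expect it to be a finite direct sum of copies of right projectives $e_{n-1}k\C$. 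Thus $\Sigma(e_n k\C)$ is a finite direct sum of indecomposable right projectives, hence projective, and the lemma follows.

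The main obstacle is purely bookkeeping: verifying that the basis splitting of $\Hom_\C([n],[m+1])$ really is a direct sum \emph{decomposition of right $k\C$-modules}, not merely of graded $k$-modules — one must check that the shifted right action cannot carry a basis element in the ``hits $m+1$'' part to one in the ``misses $m+1$'' part or vice versa, and identify each piece up to isomorphism with a standard right projective. This is the exact mirror of the computation in \cite[Proposition 2.21]{R} underlying Proposition \ref{genetic}, so I would either carry out the analogous direct check or, more economically, dualize: if $P$ is a projective right $k\C$-module then its $k$-linear dual relates to coinduction/shift on the left side, and one can transport the statement $\Sigma M(W) \cong M(W)\oplus M(\Res W)$ across. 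In the write-up I would simply state the right-module decomposition $\Sigma(e_n k\C) \cong e_n k\C \oplus (e_{n-1}k\C)^{\oplus n|G|}$ as the mirror of Proposition \ref{genetic} and refer to that proof, since the argument is verbatim the same with ``first coordinate'' replaced by ``last coordinate.''
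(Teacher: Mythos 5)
Your overall strategy---reduce to the indecomposable right projectives $e_n\cdot k\C$ and compute $\Sigma$ on them explicitly---is exactly the paper's, and the conclusion you reach (a direct sum of standard projectives) is of the right shape. However, your identification of $e_n\cdot k\C$ is wrong, and this error propagates into the claimed decomposition. By the definition of multiplication in $k\C$, the product $(f,g)\cdot(f',g')$ is nonzero precisely when the source of $(f,g)$ equals the \emph{target} of $(f',g')$; hence $e_n\cdot(f',g')\neq 0$ forces the target of $(f',g')$ to be $[n]$, so that $(e_n k\C)_r=k[\Hom_\C([r],[n])]$ for $r\leqslant n$, a module supported only in degrees $\leqslant n$. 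The module you describe, $\bigoplus_m k[\Hom_\C([n],[m])]$, is the \emph{left} projective $k\C\cdot e_n=M(n)$, supported in degrees $\geqslant n$. These are not mirror images of one another, and the asymmetry is exactly what changes the computation.

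Once $e_n k\C$ is corrected, one has $(\Sigma(e_n k\C))_r=k[\Hom_\C([r+1],[n])]$, so the new point $r+1$ lands in the \emph{source} of the injection, not the target. Since an injection always uses its whole source, $r+1$ is always sent somewhere, and there is no ``misses the new point'' summand at all. The decomposition is governed entirely by the invariant pair $\bigl(\beta(r+1),\,h(r+1)\bigr)\in[n]\times G$, which one checks is preserved by the twisted right action because $(f_+,g_+)$ sends $s+1\mapsto r+1$ with trivial $G$-label; each fiber of this invariant is a copy of $e_{n-1}k\C$, giving $\Sigma(e_n k\C)\cong(e_{n-1}k\C)^{\oplus n|G|}$ with no $e_n k\C$ factor. (The paper's displayed formula $(e_{n-1}\cdot k\C)^{n}$ appears to drop the $|G|$, but this is immaterial to the lemma.) Your proposed formula $e_n k\C\oplus(e_{n-1}k\C)^{\oplus n|G|}$ is the left-module decomposition of Proposition~\ref{genetic} transported verbatim to the right side, which the source/target asymmetry does not permit. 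The lemma itself is still true because either expression is a sum of projectives, but the computation as you describe it would not go through, and the suggested ``dualize'' shortcut does not help, since the $k$-linear dual of the finitely supported module $e_n k\C$ is not $k\C e_n$.
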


\begin{proof}
It suffices to show that the projective modules of the form $e_i \cdot k\C$ remain projective once shifted. In fact, it is the case that
\[
\Sigma(e_n\cdot k\C) \cong (e_{n-1}\cdot k\C)^{n}.
\]
This is easily checked by explicitly computing the action.\\
\end{proof}

Putting everything in the previous two sections together, we can finally prove Theorem \ref{bigrepthm}.\\

\begin{proof}[The proof of Theorem \ref{bigrepthm}]
We have already seen that $\Sigma$ and $R$ are exact. The fact that $L$ is also exact follows immediately from its definition. Indeed, it is a simple computation to show that
\[
L(V)_{n+1} = \Ind_{G_n}^{G_{n+1}} V_n
\]
for any $n \geq 0$. Because kernels and cokernels are computed pointwise, exactness of $L$ follows from the exactness of the classical induction functor.\\

Proposition \ref{genetic}, and Theorem \ref{coind} imply the first two claims of the second statement. Let $W$ be a $kG_n$-module. Then by Proposition \ref{leftadjoint}, if $V$ is a $\C$-module
\[
\Hom_{k\C}(L(M(W)), V) \cong \Hom_{k\C}(M(W),\Sigma V) \cong \Hom_{kG_n}(W,\Res_{G_n}^{G_{n+1}}V_{n+1})
\cong \Hom_{kG_{n+1}}(\Ind_{G_n}^{G_{n+1}} W, V_{n+1}).
\]
Proposition \ref{yoneda} implies that $L(M(W)) \cong M(\Ind_{G_n}^{G_{n+1}} W)$, as desired.\\

The final part of the theorem is just the Eckmann-Shapiro lemma from representation theory, and is proven in the same way in this context.\\
\end{proof}

For ease of reference later, we take a moment to state the Eckmann-Shapiro lemma.\\

\begin{proposition}[The Eckmann-Shapiro lemma]\label{EckSha}
Let $V$ and $V'$ be finitely generated $\C$-modules. Then there are isomorphisms for all $i \geqslant 0$
\begin{eqnarray}
\Ext^i_{k\C}(L(V),V') &\cong& \Ext^i_{k\C}(V,\Sigma V');\\
\Ext^i_{k\C}(\Sigma V,V') &\cong& \Ext^i_{k\C}(V,R(V')).
\end{eqnarray}\\
\end{proposition}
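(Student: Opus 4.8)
The plan is to deduce the Eckmann-Shapiro isomorphisms from the adjunctions $(L,\Sigma)$ and $(\Sigma,R)$ established in Propositions \ref{leftadjoint} and the results of \cite{GL}, together with the exactness statements already in hand. The key observation is that the standard categorical recipe for deriving an adjoint pair into Ext-isomorphisms requires that one of the two functors in the pair send projectives to projectives (to derive on one side) or injectives to injectives (to derive on the other), and that this functor be exact so that no higher derived terms intervene.

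Here is the order I would carry things out. First, treat the pair $(L,\Sigma)$. I would compute an explicit projective resolution: given a finitely generated $\C$-module $V$, choose a projective (equivalently, $\sharp$-filtered-with-free-steps) resolution $P_\bullet \to V$. Since $L$ is exact (shown in the proof of Theorem \ref{bigrepthm}) and carries free modules to free modules --- indeed $L(M(n)) \cong M(n+1)$ by Theorem \ref{bigrepthm} --- the complex $L(P_\bullet)$ is a projective resolution of $L(V)$. Then
\[
\Ext^i_{k\C}(L(V),V') = H^i\big(\Hom_{k\C}(L(P_\bullet),V')\big) \cong H^i\big(\Hom_{k\C}(P_\bullet,\Sigma V')\big) = \Ext^i_{k\C}(V,\Sigma V'),
\]
where the middle isomorphism is the $(L,\Sigma)$ adjunction applied degreewise, naturally in each $P_j$. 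This gives the first isomorphism.

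For the second pair $(\Sigma,R)$, the symmetric argument would use an injective resolution of $V'$, but the paper has stressed that $\C\module$ lacks enough injectives, so instead I would again resolve on the left: take a projective resolution $P_\bullet \to V$ of $V$. Because $\Sigma$ is exact (Proposition \ref{genetic} and the discussion after the coinduction definition) and preserves projectives (Proposition \ref{genetic} again: $\Sigma M(W)$ is a sum of basic filtered modules, and is free when $W$ is free), $\Sigma(P_\bullet) \to \Sigma V$ is a projective resolution. Then $R$ being right adjoint to $\Sigma$ gives, degreewise and naturally,
\[
\Hom_{k\C}(\Sigma P_j, V') \cong \Hom_{k\C}(P_j, R(V')),
\]
and passing to cohomology yields $\Ext^i_{k\C}(\Sigma V, V') \cong \Ext^i_{k\C}(V, R(V'))$. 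The main obstacle, and the point that needs a little care rather than a purely formal invocation, is verifying that $\Sigma$ and $L$ genuinely send \emph{every} projective $k\C$-module (not merely the finitely generated free ones) into the class of projectives over which one resolves --- this is where Lemma \ref{rightproj} and Proposition \ref{genetic} do real work, and where one must be careful that ``finitely generated'' hypotheses are not silently needed on the resolution. Once that is pinned down, everything else is the usual degreewise-adjunction-then-take-cohomology bookkeeping, exactly as in the classical Eckmann-Shapiro proof.
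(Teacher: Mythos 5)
Your proof is correct and follows the standard Eckmann--Shapiro argument, which is precisely what the paper invokes: the proof of Theorem \ref{bigrepthm} simply states that this part ``is proven in the same way in this context'' and supplies no further detail. The worry you raise at the end is harmless, since one may resolve $V$ by finitely generated free modules (direct sums of $M(n)$, which exist by the Noetherian property), and both $L$ and $\Sigma$, being left adjoints, commute with arbitrary direct sums and send $M(n)$ to free modules, so every term of the resolution remains in the class you are using.
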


\section{Homological computations}

\subsection{Depth and the proof of Theorem \ref{equivdepth}}

In this section we will review the concept of depth first introduced in \cite{R}. Following this, we will consider an invariant we call classical depth, and prove that it is equivalent to the depth of \cite{R}.

\begin{definition}\label{dervdef}
Let $V$ be a $\C$-module, and let $\tau_V:V \rightarrow \Sigma V$ be the map induced by the pairs $(f_n,\mathbf{1})$, where $f_n:[n] \rightarrow [n+1]$ is the standard inclusion and the trivial map into $G$. We define the \textbf{derivative functor} to be the cokernel
\[
DV := \coker(\tau_V).
\]
We write $D^a$ for the $a$-th iterate of $D$.\\

Note that the derivative functor is right exact, and so we write $H_i^{D^a}$ to denote the $i$-th derived functor of $D^a$. By \cite[Lemma 3.6]{CE}, $V$ is torsion free if and only if $\tau_V$ is injective.\\
\end{definition}

\begin{remark}\label{dervprop}
It follows from the proof of Proposition \ref{genetic} that $\tau_{M(W)}:M(W) \rightarrow \Sigma M(W)$ is a split injection. Moreover, the compliment of $M(W)$ in $\Sigma M(W)$ is generated in strictly smaller degrees. Therefore, if $V$ is any finitely generated $\C$-module with $\gd(V) = d$, then $\gd(DV) \leq d-1$. In fact, it was shown by the first author and Yu that $\gd(DV) = d-1$, so long as $DV \neq 0$ \cite[Proposition 2.4]{LY}. Using the same proofs, we can actually say something a bit more general. For any positive integer $b$, Let $\tau_b$ be the map
\[
\tau_b:V \rightarrow \Sigma_b V
\]
induced by the pair $(f^b_n:\mathbf{1})$, where $f^b_n:[n] \rightarrow [n+b]$ is the standard inclusion, and $\mathbf{1}$ is the trivial map into $G$. Then $\gd(\coker(\tau_b)) < \gd(V)$.\\
\end{remark}

The derivative functor was introduced as a means of bounding the regularity of $\FI$-modules by Church and Ellenberg in \cite[Theorem A]{CE}. In that paper, Church and Ellenberg use the derivative as a convenient homological tool for approximating the homological degrees. Following this, the second author discovered that the derivative functor held much higher homological significance than was previously observed. Namely, it was shown that the derivative was critical in developing a theory of depth for $\C$-modules \cite{R}. At the same time as this, the first author and Yu also used the derivative in proving many homological facts about $\C$-modules \cite{LY}.\\

\begin{definition}\label{depth}
Let $V$ be a finitely generated $\C$-module. We define the \textbf{depth} of $V$ to be the quantity,
\[
\depth(V) := \inf\{a \mid H_1^{D^{a+1}}(V) \neq 0\} \in \mathbb{N} \cup \{\infty\},
\]
where we use the convention that the infimum of the empty set is $\infty$.\\
\end{definition}

\begin{theorem}[\cite{R}, Theorem 4.4]\label{depthclass}
Let $V$ be a finitely generated $\C$-module. Then,
\begin{enumerate}
\item $\depth(V) = 0$ if and only if $V$ is not torsion free;
\item $\depth(V) = \infty$ if and only if $V$ is $\sharp$-filtered;
\item $\depth(V) = a > 0$ is finite if and only if there is an exact sequence
\[
0 \rightarrow V \rightarrow X_{a-1} \rightarrow \ldots \rightarrow X_{0} \rightarrow V' \rightarrow 0
\]
where $X_i$ is $\sharp$-filtered for each $i$, $\gd(X_i) > \gd(X_{i-1})$, and $V'$ is not torsion free.\\
\end{enumerate}
\end{theorem}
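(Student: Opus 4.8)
The plan is to derive a single recursion for the depth under the derivative functor $D$, and then read off all three statements. First I would collect a few structural facts about $D$. Applying the snake lemma to the natural transformation $\tau\colon\mathrm{Id}\Rightarrow\Sigma$, together with the observation (visible from Proposition \ref{genetic} and Remark \ref{dervprop}) that $\tau_P$ is a split injection with cokernel $DP$ again projective for every projective $P$, shows that $H_1^D(V)=\ker\tau_V$ and $H_i^D(V)=0$ for $i\geq 2$; a short check shows $\ker\tau_V$ is always a torsion module, since every morphism out of $[n]$ with larger target factors through $(f_n,\mathbf{1})$. Chasing these same natural transformations yields $D\Sigma\cong\Sigma D$ (the two strand insertions into $\Sigma^2 V$ differ by a natural automorphism), and Proposition \ref{genetic} shows $D$ preserves $\sharp$-filtered modules, with $DM(W)\cong M(\Res^{G_n}_{G_{n-1}}W)$. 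An induction via the Grothendieck spectral sequence for $D^a=D^{a-1}\circ D$ — whose hypotheses hold because $D$ carries projectives to $\sharp$-filtered modules and $\sharp$-filtered modules are $D^b$-acyclic for every $b$ — then gives, for $V$ torsion free, $H_i^{D^a}(V)\cong H_i^{D^{a-1}}(DV)$ for all $i\geq 0$. Combined with the fact that $\ker\tau_V\neq 0$ exactly when $V$ is not torsion free (\cite{CE}), this produces the master recursion
\[
\depth(V)=\begin{cases}0, & V\text{ not torsion free},\\ 1+\depth(DV), & V\text{ torsion free},\end{cases}
\]
with the conventions $\depth(0)=\infty$ and $\infty+1=\infty$.

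Statement (1) is then immediate. For (2): if $V$ is $\sharp$-filtered then every $D^jV$ is $\sharp$-filtered, hence torsion free, and by Remark \ref{dervprop} the generating degrees $\gd(D^jV)$ strictly decrease, so $D^mV=0$ for $m>\gd(V)$; iterating the recursion gives $\depth(V)=m+\depth(D^mV)=\infty$. For the converse I would induct on $\gd(V)$: if $\depth(V)=\infty$ then $V$ and all its derivatives are torsion free, $\gd(DV)<\gd(V)$ (or $DV=0$), so by the inductive hypothesis $DV$ is $\sharp$-filtered, and therefore so is $\Sigma_jDV\cong D\Sigma_jV$ for every $j$. Choosing $N$ with $\Sigma_NV$ $\sharp$-filtered (\cite{N}) and feeding each short exact sequence $0\to\Sigma_jV\to\Sigma_{j+1}V\to D\Sigma_jV\to 0$ (exact because $\Sigma_jV$ is torsion free) into the long exact sequence of homology gives $H_1(\Sigma_jV)\cong H_1(\Sigma_{j+1}V)$; descending from $j=N-1$ to $j=0$ yields $H_1(V)\cong H_1(\Sigma_NV)=0$, so $V$ is $\sharp$-filtered by Theorem \ref{homacyclic}.

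For the forward direction of (3), suppose $0<\depth(V)=a<\infty$. Then $V$ is torsion free; using \cite{N} pick $N_1\gg 0$ with $\Sigma_{N_1}V$ $\sharp$-filtered and set $C_1=\coker(\tau_{N_1})$, giving an exact sequence $0\to V\to\Sigma_{N_1}V\to C_1\to 0$ with $\gd(C_1)<\gd(V)$ (Remark \ref{dervprop}). Feeding it into the long exact sequences of the functors $H_i^{D^m}$, all of which vanish on the $\sharp$-filtered middle term, and using $D^m\Sigma_{N_1}\cong\Sigma_{N_1}D^m$, one computes $H_1^{D^m}(C_1)\cong\ker(D^mV\to\Sigma_{N_1}D^mV)$, which is nonzero precisely when $D^mV$ fails to be torsion free; since $\depth(V)=a$ means $V,DV,\dots,D^{a-1}V$ are torsion free and $D^aV$ is not, this gives $\depth(C_1)=a-1$. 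Iterating this construction $a$ times and splicing produces an exact sequence $0\to V\to F_1\to\cdots\to F_a\to V'\to 0$ with each $F_i$ $\sharp$-filtered, $\gd(F_1)>\cdots>\gd(F_a)$, and $\depth(V')=0$, i.e. $V'$ not torsion free; relabelling $X_{a-1}=F_1,\dots,X_0=F_a$ gives the asserted sequence. For the converse, given such a sequence I would splice it into short exact sequences $0\to V\to X_{a-1}\to Z_{a-2}\to 0$, $0\to Z_i\to X_i\to Z_{i-1}\to 0$, $0\to Z_0\to X_0\to V'\to 0$, and establish the key lemma: whenever $0\to A\to F\to B\to 0$ is exact with $F$ $\sharp$-filtered, $A$ is torsion free and $\depth(A)=1+\depth(B)$. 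Applying $D$ and its first derived functor to such a sequence gives $0\to\ker\tau_B\to DA\to DF\to DB\to 0$ with $DF$ $\sharp$-filtered; if $B$ is not torsion free, $\ker\tau_B$ is a nonzero torsion submodule of $DA$, so $\depth(DA)=0$ and $\depth(A)=1$, while if $B$ is torsion free the sequence $0\to DA\to DF\to DB\to 0$ has the same shape and smaller $\gd(DA)$, so induction together with the master recursion finishes it. Feeding this lemma up the chain of splices ($\depth(Z_0)=1$, $\depth(Z_i)=i+1$, $\depth(V)=1+\depth(Z_{a-2})=a$) completes the converse.

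The part I expect to be most delicate is assembling the foundational package rather than any one of the three arguments: in particular, proving $D\Sigma\cong\Sigma D$ and that $\sharp$-filtered modules are acyclic for every iterate $D^b$ (so the Grothendieck spectral sequences collapse), and handling the non-torsion-free case of the short-exact-sequence lemma, where $\ker\tau$ must be recognized as genuine torsion. The remaining work is bookkeeping with \cite{N} and Remark \ref{dervprop} to keep every construction finite.
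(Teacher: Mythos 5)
This theorem is stated with the citation \cite[Theorem 4.4]{R} and the paper gives no proof of its own, so there is nothing internal to compare your argument against; I therefore evaluate it on its own terms.

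Your proof is correct and self-contained. The architecture is sound: you establish the ``master recursion'' $\depth(V) = 1 + \depth(DV)$ for torsion free $V$ (with $\depth(V)=0$ otherwise), then read the three parts of the theorem out of it. The foundational facts all check out: $H_1^D(V)\cong\ker\tau_V$ and $H_i^D(V)=0$ for $i\geq 2$ follow from the short exact sequence of complexes $0\to P_\bullet\to\Sigma P_\bullet\to DP_\bullet\to 0$ (using that $P_\bullet$ is a complex of torsion free objects and $\Sigma$ is exact); $D\Sigma\cong\Sigma D$ comes from the natural automorphism of $\Sigma^2$ interchanging the two inserted points; $D$ carries $M(W)$ to $M(\Res W)$ and the snake lemma then shows $D$ preserves the class of $\sharp$-filtered modules; and $\sharp$-filtered objects are $D^b$-acyclic by induction on $b$ via the Grothendieck spectral sequence (since they are torsion free and $D$-stable). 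Part (1) is immediate, part (2) follows from the recursion together with $\gd(DV)<\gd(V)$ and an induction on generating degree for the converse, and in part (3) the forward direction correctly tracks $\depth(C_j)=a-j$ through the Nagpal-style resolution, while the converse correctly reduces to the key lemma that a short exact sequence $0\to A\to F\to B\to 0$ with $F$ $\sharp$-filtered forces $A$ torsion free and $\depth(A)=1+\depth(B)$. One small presentational point: when you assert that $H_1^{D^m}(C_1)\cong\ker(\tau_{N_1}\colon D^mV\to\Sigma_{N_1}D^mV)$ is nonzero precisely when $D^mV$ is not torsion free, the ``only if'' half deserves a sentence --- $\tau_{N_1}$ is a composition of shifted copies of $\tau$, each of which is injective when the shifted module is torsion free, and $\Sigma$ preserves torsion free objects --- but this is easy to supply. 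The argument holds over any Noetherian $k$, matching the generality of the statement.
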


While the above theorem justifies the use of the terminology depth, it might still be unclear at this point where this definition of depth comes from. In the remainder of this section, we will use the classical nature of the language of $k\C$-modules to provide an alternative definition of depth, which we prove is equivalent to the above.\\

\begin{definition}\label{cdepth}
Let $V$ be a finitely generated $k\C$-module, and recall that $\mi \subseteq k\C$ is the ideal generated by all non-permutations. Then we define the \textbf{classical depth} of $V$ to be the quantity
\[
\cdepth(V) := \inf\{i \mid \Ext^i_{\C\Mod}(k\C/\mi,V) \neq 0\}
\]
where we use the convention that the infimum of the empty set is $\infty$.\\
\end{definition}

\begin{remark}
As a slight technical point, one should note that the category of finitely generated $\C$-modules may not have sufficiently many injectives if $k$ is not a field of characteristic 0 (see Theorem \ref{noinj}). This is of course not problematic here, as we may compute these $\Ext$-groups through a projective resolution of $k\C/\mi$. Later, when we discuss cohomology, this will become more of an issue.\\
\end{remark}

In the paper \cite[Theorem 4.17]{R}, the second author shows that $\cdepth(V) = \depth(V)$ whenever $k$ is a field of characteristic 0. This proof, however, is highly dependent on the properties of $\C$-modules over a field of characteristic 0. The proof we provide in this section will work over any Noetherian ring.\\

We will proceed with a collection of reductions, which end with us only needing to show a particular collection of $\Ext$-groups vanish. We begin with the following lemma.\\

\begin{lemma}
Depth and classical depth are equivalent if and only if $\sharp$-filtered objects have infinite classical depth.\\
\end{lemma}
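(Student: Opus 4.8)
The plan is to reduce the equivalence $\depth(V) = \cdepth(V)$ for all finitely generated $V$ to the single statement that $\cdepth(F) = \infty$ for every $\sharp$-filtered $F$. One direction is immediate: if depth and classical depth always agree, then by Theorem \ref{depthclass}(2) a $\sharp$-filtered module has $\depth = \infty$, hence $\cdepth = \infty$. For the converse, I would argue by induction on $\depth(V)$ using the structural description in Theorem \ref{depthclass}. The base cases are handled directly. If $V$ is $\sharp$-filtered, then $\depth(V) = \infty$ by Theorem \ref{depthclass}(2), and $\cdepth(V) = \infty$ is exactly our hypothesis. If $V$ is not torsion free, then $\depth(V) = 0$ by Theorem \ref{depthclass}(1); on the other hand $V$ has nonzero torsion, so $\Hom_{k\C}(kG_s, V) \neq 0$ for some $s$, and since $k\C/\mi \cong \bigoplus_s kG_s$ as a $k\C$-module, this gives $\Hom_{\C\Mod}(k\C/\mi, V) \neq 0$, i.e. $\cdepth(V) = 0$.

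For the inductive step, suppose $0 < \depth(V) = a < \infty$. By Theorem \ref{depthclass}(3) there is an exact sequence $0 \to V \to X_{a-1} \to \cdots \to X_0 \to V' \to 0$ with each $X_i$ $\sharp$-filtered and $V'$ not torsion free. Break this into short exact sequences and feed them into the long exact sequence for $\Ext^\bullet_{\C\Mod}(k\C/\mi, -)$. Since each $X_i$ has $\Ext^\bullet_{\C\Mod}(k\C/\mi, X_i) = 0$ in all degrees (our hypothesis, applied to all the $\sharp$-filtered terms), the long exact sequences collapse to give dimension-shifting isomorphisms, so that $\Ext^i_{\C\Mod}(k\C/\mi, V) \cong \Ext^{i-a}_{\C\Mod}(k\C/\mi, V')$ for all $i$ (with the convention that negative $\Ext$ vanishes). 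Since $V'$ is not torsion free, $\cdepth(V') = 0$ by the base case, so the smallest $i$ with $\Ext^i_{\C\Mod}(k\C/\mi, V) \neq 0$ is exactly $a$; that is, $\cdepth(V) = a = \depth(V)$. The one remaining case, $\depth(V) = \infty$ with $V$ not necessarily $\sharp$-filtered, does not arise, since Theorem \ref{depthclass}(2) says $\depth(V) = \infty$ forces $V$ to be $\sharp$-filtered.

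The main subtlety to be careful about is the bookkeeping in splicing the resolution of length $a$ into short exact sequences and verifying that the connecting maps shift degree by exactly one at each stage, so that the cumulative shift is precisely $a$ and no earlier nonvanishing $\Ext$ sneaks in; this uses the vanishing of $\Ext^\bullet_{\C\Mod}(k\C/\mi, X_i)$ in \emph{all} degrees, not just positive ones, which is why the hypothesis is phrased as $\cdepth = \infty$ rather than merely ``positive classical depth.'' Everything else is formal: the identification $k\C/\mi \cong \bigoplus_{s \geqslant 0} kG_s$ as $k\C$-modules (so that $\Ext^i_{\C\Mod}(k\C/\mi, V) = \prod_s \Ext^i_{\C\Mod}(kG_s, V)$) and the observation that a nonzero torsion element of $V$ produces a nonzero map from some $kG_s$. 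The real content of the equivalence is thus entirely concentrated in the asserted lemma, and the actual work of proving $\cdepth(F) = \infty$ for $\sharp$-filtered $F$ is deferred to the subsequent results of the paper.
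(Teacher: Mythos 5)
Your proof is correct and is essentially the argument the paper gestures at: the paper's own "proof" is a one-line citation to the techniques of \cite[Theorem 4.17]{R}, and what you have written out is precisely the dimension-shifting argument, using the structural characterization of depth in Theorem \ref{depthclass} together with $k\C/\mi \cong \bigoplus_s kG_s$, that underlies that reference.
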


\begin{proof}
This follows from the techniques used in the proof of \cite[Theorem 4.17]{R}.\\
\end{proof}

Our second major reduction is to note that
\[
\Ext^i_{k\C}(k\C/\mi,V) \cong \prod_n \Ext^i_{k\C}(kG_n,V).
\]
It therefore suffices to show that $\Ext^i_{k\C}(kG_s,V) = 0$ for all $s$, whenever $V$ is $\sharp$-filtered. Moreover, a simple homological argument shows that it actually suffices to assume that $V = M(W)$ for some $kG_n$-module $W$.\\

\begin{lemma}
The modules $\Ext^i_{k\C}(kG_s,V)$ are zero for all $s,i \geqslant 0$ and all basic filtered modules $V$ if and only if the modules $\Ext^i_{k\C}(kG_0,V)$ are zero for all $i \geqslant 0$ and all basic filtered modules $V$. \\
\end{lemma}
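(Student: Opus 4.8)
The plan is to transport the hypothesized vanishing across the coinduction functor by means of the Eckmann--Shapiro lemma. The implication ``$\Rightarrow$'' is trivial, being the special case $s=0$. For ``$\Leftarrow$'', assume $\Ext^i_{k\C}(kG_0,V)=0$ for all $i\geqslant 0$ and all basic filtered $V$, and fix $s\geqslant 1$, $i\geqslant 0$, and a basic filtered module $M(W)$ with $W$ a finitely generated $kG_n$-module.

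The first step is the elementary observation that shifting the torsion module $kG_s$ all the way down to degree $0$ produces a sum of copies of $kG_0$. Indeed $kG_s$ is supported in degree $s$ (where its value is the left regular representation of $G_s$) and has vanishing transition maps, so a direct pointwise computation from Definition \ref{shift} shows that $\Sigma_s(kG_s)$ is supported in degree $0$; since $G_0$ is the trivial group, the group action there is trivial, whence $\Sigma_s(kG_s)\cong (kG_0)^{|G_s|}$ as $\C$-modules.

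The second step is to iterate the adjunction. Composing the second Eckmann--Shapiro isomorphism of Proposition \ref{EckSha} $s$ times gives $\Ext^j_{k\C}(\Sigma_s U,V')\cong \Ext^j_{k\C}(U,R_s V')$ for all $j\geqslant 0$, where $R_s$ denotes the $s$-fold iterate of $R$; here one uses that $kG_s$ and $M(W)$ are finitely generated and that, by Theorem \ref{coind}, the iterates $R_t(M(W))$ remain so, so that Proposition \ref{EckSha} applies at each stage. Taking $U=kG_s$ and $V'=M(W)$ and invoking the previous step,
\[
\Ext^i_{k\C}(kG_0,M(W))^{|G_s|}\cong \Ext^i_{k\C}\bigl(\Sigma_s(kG_s),M(W)\bigr)\cong \Ext^i_{k\C}\bigl(kG_s,R_s(M(W))\bigr),
\]
and the left-hand side is $0$ by hypothesis, since $M(W)$ is basic filtered. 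Hence $\Ext^i_{k\C}(kG_s,R_s(M(W)))=0$.

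The final step, which is the real point, is a retract argument. By Lemma \ref{coindlem1}, $M(W)$ is a direct summand of $R(M(W))$, and iterating this (using that $R$ carries a direct sum of basic filtered modules to such a sum, by Theorem \ref{coind}) shows $M(W)$ is a direct summand of $R_s(M(W))$. Consequently $\Ext^i_{k\C}(kG_s,M(W))$ is a direct summand of $\Ext^i_{k\C}(kG_s,R_s(M(W)))=0$, hence vanishes; as $s\geqslant 1$, $i\geqslant 0$, and the basic filtered module were arbitrary, and the case $s=0$ is exactly the hypothesis, the first statement follows. I do not expect a genuine obstacle: the conceptual content is just the pairing of ``$kG_s$ shifts down to copies of $kG_0$'' with ``$M(W)$ is a retract of $R_s(M(W))$'', and the only points demanding care are the bookkeeping in the iterated Eckmann--Shapiro isomorphism and keeping every module finitely generated.
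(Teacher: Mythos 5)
Your proof is correct, but it takes a genuinely different route from the paper. The paper uses the induction functor $L$ and the first Eckmann--Shapiro isomorphism: it computes $L(kG_s)\cong kG_{s+1}$, so $kG_s\cong L_s(kG_0)$, and then
$\Ext^i_{k\C}(kG_s,V)\cong\Ext^i_{k\C}(kG_0,\Sigma_s V)$, which vanishes because Proposition \ref{genetic} shows $\Sigma_s V$ is again a direct sum of basic filtered modules. You instead shift $kG_s$ down via $\Sigma$ (observing $\Sigma_s(kG_s)\cong(kG_0)^{|G_s|}$, an elementary pointwise calculation), apply the second Eckmann--Shapiro isomorphism to move $R_s$ onto the coefficient module, and finish with a retract argument based on Lemma \ref{coindlem1} and Theorem \ref{coind} to extract the vanishing of $\Ext^i_{k\C}(kG_s,M(W))$ from that of $\Ext^i_{k\C}(kG_s,R_s(M(W)))$. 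Both routes are sound. The paper's is marginally tighter because it moves the shift onto the basic filtered coefficient where it stays inside the hypothesized class, avoiding the extra retract step; your version buys a more elementary identification ($\Sigma_s$ of a torsion module concentrated in one degree, versus the computation $L(kG_s)\cong kG_{s+1}$) at the cost of one more layer of bookkeeping with $R_s$. One small point worth making explicit in your argument: when iterating Eckmann--Shapiro you should note that each intermediate $\Sigma_t(kG_s)$ is a finitely generated (finitely supported) torsion module, and each $R_t(M(W))$ is finitely generated by Theorem \ref{coind}, so Proposition \ref{EckSha} applies at every stage -- you gesture at this, and it is indeed fine.
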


\begin{proof}
The forward direction is clear. Assume that $\Ext^i_{k\C}(kG_0,V) = 0$ for all $i \geqslant 0$ and all basic filtered modules $V$.\\

A straight forward computation verifies that $L(kG_s) \cong kG_{s+1}$ for all $s \geq 0$. Therefore, if $V$ is any basic filtered module, and $i,s \geqslant 0$
\[
\Ext^i_{k\C}(kG_s,V) \cong \Ext^i_{k\C}(L_s(kG_0),V) \cong \Ext^i_{k\C}(kG_0,\Sigma_s V) = 0.
\]
Note that the second isomorphism follows from the Eckmann-Shapiro lemma and our assumption as well as Proposition \ref{genetic}.\\
\end{proof}

We have now reduced the problem enough to be solvable.\\

\begin{proof}[The proof of Theorem \ref{equivdepth}]
By the previous lemmas it suffices to prove that $\Ext^i_{k\C}(kG_0,V) = 0$, where $V = M(W)$ for some $kG_n$-module $W$.\\

We first note for any $\C$-module $V'$, elements of $\Hom_{k\C}(kG_0,V')$ can be thought of as elements of $V'_0$ which are in the kernel of all transition maps out of $V'_0$. Since basic filtered modules are torsion free, it follows that our desired result holds for $i = 0$. Next, consider the exact sequence
\begin{equation*}
0 \to J_0 \to M(0) \to kG_0 \to 0.
\end{equation*}
If $V$ is generated in degree $\geqslant 2$, then it is clear that $\Hom_{k\C}(J_0,V) = 0$. If $V$ is generated in degree 1, then an element of $\Hom_{k\C}(J_0,V)$ is a choice of an element of $V_1$ whose image under all transition maps is invariant with respect to the $G_m$-action. It is easily seen that no such elements exist in our case, and so once again $\Hom_{k\C}(J_0,V) = 0$. Finally, if $V$ is generated in degree 0, then $\Hom_{k\C}(M(0),V) \cong \Hom_{k\C}(J_0,V)$. In all cases, we conclude $\Ext^1_{k\C}(kG_0,V) = 0$.\\

Using Theorem \ref{coind}, we may write $RV \cong V \oplus V'$, where $V' = M(U)$ for some $kG_{n+1}$-module $U$.
In particular, it must be the case that $V'$ is generated in degree $\geqslant 1$, and therefore $\Hom_{k\C}(J_0, W) = 0$ by the previous paragraph's discussion. Applying the functor $\Hom_{k\C}(J_0, \bullet)$ one gets
\begin{equation*}
\Ext^1_{k\C}(J_0,V) \subseteq \Ext^1_{k\C}(J_0,R(V)).
\end{equation*}
But the Eckmann-Shapiro lemma implies
\begin{equation*}
\Ext^1_{k\C} (J_0, R(V)) \cong \Ext^1_{k\C} (\Sigma J_0, V) \cong \Ext^1_{k\C} (M(0), V) = 0.
\end{equation*}
This forces
\begin{equation*}
0 = \Ext^1_{k\C} (J_0, V) \cong \Ext^2_{k\C} (kG_0, V).
\end{equation*}
Now suppose that the conclusion holds for some $i \geqslant 2$, and consider $\Ext^{i+1}_{k\C} (kG_0, V)$. Then
\begin{equation*}
\Ext^{i+1}_{k\C} (kG_0, V) \cong \Ext^i_{k\C} (J_0, V).
\end{equation*}
Applying $\Hom_{k\C} (J_0, \bullet)$ to the exact sequence
\begin{equation*}
0 \to V \to RV \to V' \to 0
\end{equation*}
one gets
\begin{equation*}
\Ext^{i-1}_{k\C} (J_0, V') \to \Ext^i_{k\C} (J_0, V) \to \Ext^i_{k\C} (J_0, RV).
\end{equation*}
The first term is 0 by induction hypothesis, and the last term is 0 as well by using the Eckmann-Shapiro lemma, as we did previously. This proves the claim.\\
\end{proof}

As an interesting consequence of our theorem, we obtain the following vanishing theorem for $\Ext$-modules.\\

\begin{proposition}
Let $V$ be a finitely generated $\C$-module. Then for all $i \geqslant 0$, and for all $n \geqslant N(V)$,
\[
\Ext^i_{k\C}(kG_{n},V) = 0.
\]
\end{proposition}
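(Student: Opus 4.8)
The plan is to obtain this as a formal consequence of the computation carried out inside the proof of Theorem~\ref{equivdepth}, combined with Nagpal's theorem, and mediated by the Eckmann--Shapiro lemma.

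First I would recall the identity $L(kG_s)\cong kG_{s+1}$ that was verified above during the proof of the second reduction lemma; iterating it gives $kG_n\cong L_n(kG_0)$ for every $n\geqslant 0$. Since $kG_n$ is finitely generated (being a quotient of $M(n)$), since $V$ is finitely generated by hypothesis, and since $\Sigma$ preserves finite generation (apply the exact functor $\Sigma$ to a finite free presentation of $V$ and invoke Proposition~\ref{genetic} to see that each $\Sigma M(m)$ is again a finite direct sum of finitely generated basic filtered modules), Proposition~\ref{EckSha} is available at each of the $n$ stages. Applying it repeatedly — exactly the manipulation already used in the proof of the reduction lemma — yields
\[
\Ext^i_{k\C}(kG_n,V)\;\cong\;\Ext^i_{k\C}\bigl(L_n(kG_0),V\bigr)\;\cong\;\Ext^i_{k\C}(kG_0,\Sigma_n V)
\]
for every $i\geqslant 0$.

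Next, because $n\geqslant N(V)$, the module $\Sigma_{N(V)}V$ is $\sharp$-filtered by definition of the Nagpal number, and applying $\Sigma$ the remaining $n-N(V)$ times keeps it $\sharp$-filtered by Proposition~\ref{genetic}; hence $\Sigma_n V$ is $\sharp$-filtered. Now the reduction steps preceding the proof of Theorem~\ref{equivdepth}, together with the vanishing $\Ext^i_{k\C}(kG_0,M(W))=0$ established there for every $kG_m$-module $W$, show that $\Ext^i_{k\C}(kG_0,F)=0$ for all $i\geqslant 0$ and every $\sharp$-filtered $F$: one propagates the vanishing from the basic filtered cofactors of $F$ up its defining filtration using the long exact sequence of $\Ext^{\bullet}_{k\C}(kG_0,-)$. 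Taking $F=\Sigma_n V$ and feeding this back into the displayed isomorphism gives $\Ext^i_{k\C}(kG_n,V)=0$, as desired.

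I do not expect a genuine obstacle; once Theorem~\ref{equivdepth} and Nagpal's theorem are in hand the argument is entirely formal. The single point that warrants a line of care is the standing hypothesis in Proposition~\ref{EckSha} that both of its arguments be finitely generated, which obliges one to note that every intermediate shift $\Sigma_j V$ with $0\leqslant j\leqslant n$ is again finitely generated — and this is immediate from exactness of $\Sigma$ together with the structure of $\Sigma M(m)$ furnished by Proposition~\ref{genetic}.
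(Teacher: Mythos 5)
Your proposal is correct and is essentially the paper's own proof: both use the identity $kG_n \cong L_n(kG_0)$, the Eckmann--Shapiro lemma to transfer to $\Ext^i_{k\C}(kG_0,\Sigma_n V)$, and the fact that $\Sigma_n V$ is $\sharp$-filtered for $n\geqslant N(V)$ combined with the vanishing established in the proof of Theorem~\ref{equivdepth}. The extra care you take (tracking finite generation for Eckmann--Shapiro, and propagating the vanishing from basic filtered cofactors up a filtration) just makes explicit details the paper leaves implicit.
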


\begin{proof}
Take $N$ to be the Nagpal number of $V$. Then for all $n \geqslant N$,
\[
\Ext^i_{k\C}(kG_n,V) \cong \Ext^i_{k\C}(L_n(kG_0),V) \cong \Ext^i_{k\C}(kG_0,\Sigma_n V) = 0
\]
since $\Sigma_n V$ is $\sharp$-filtered. This completes the proof.\\
\end{proof}

\subsection{Homological Orthogonal Relations}

In this section, we more closely examine the relationship between torsion and $\sharp$-filtered modules. In particular, over the course of this section we will be proving Theorems \ref{charfil} and \ref{homorth}.\\

\begin{definition}\label{torsion}
Let $V$ be a $\C$-module. We say that an element $v \in V_n$ is \textbf{torsion} if it is in the kernel of some transition map out of $n$. In the language of $k\C$-modules, we say that an element $v \in V_n$ is torsion, if there is some map $(f,g):[n] \rightarrow [m]$ such that $(f,g)\cdot v = 0$. We say that $V$ is \textbf{torsion} if its every element is torsion.\\

If $V$ is a $\C$-module, then there is always an exact sequence
\[
0 \rightarrow V_T \rightarrow V \rightarrow V_F \rightarrow 0
\]
where $V_T$ is a torsion module called the \textbf{torsion part} of $V$, and $V_F$ is a torsion free module called the \textbf{torsion free part} of $V$.\\
\end{definition}

While the following lemma might seem tautological, and indeed we will find it not difficult to prove, it is not immediate from the definitions thus far provided. Recall that we say that $V$ is torsion free whenever $\tau_V$ is injective.\\

\begin{lemma}
A module $V$ is torsion free if and only if it contains no torsion elements.\\
\end{lemma}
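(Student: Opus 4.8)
The plan is to prove both implications by contraposition, exploiting the fact that ``torsion free'' is a statement about the single natural map $\tau_V \colon V \to \Sigma V$, whereas ``contains no torsion elements'' involves every transition map. For one direction, suppose $V$ is not torsion free, so that $\tau_V$ is not injective; then some nonzero $v \in V_n$ lies in the kernel of $(\tau_V)_n$, which by definition is the induced map $(f_n,\mathbf 1)_\as \colon V_n \to V_{n+1}$ of the transition map $(f_n,\mathbf 1)$. Hence $v$ is a nonzero torsion element, and the implication ``no torsion elements'' $\Rightarrow$ ``torsion free'' follows.

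For the converse, assume $\tau_V$ is a monomorphism and let $v\in V_n$ be a torsion element; I want to conclude $v=0$. First I would check the identity $\Sigma_j(\tau_V) = \tau_{\Sigma_j V}$ for all $j\geq 0$, a short computation from the definition of $\iota$ together with the observation that $(f_n,\mathbf 1)_+ = (f_{n+1},\mathbf 1)$. This shows that the iterated map $\tau_b\colon V\to\Sigma_b V$ of Remark \ref{dervprop} is the composite $\tau_{\Sigma_{b-1}V}\circ\cdots\circ\tau_{\Sigma V}\circ\tau_V$, that is, the result of applying the functor $\Sigma$ repeatedly to $\tau_V$; since $\Sigma$ is exact (Theorem \ref{bigrepthm}), each $\tau_b$ is again a monomorphism. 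Next I would note that any morphism $(f,g)\colon[n]\to[m]$ of $\C$ can be written as $(f,g) = \sigma\circ(f^{m-n}_n,\mathbf 1)$ with $\sigma\in G_m$; this is just transitivity of the left $G_m$-action on $\Hom_\C([n],[m])$, which one reads off directly from the composition rule in $\C$. Finally, if $v$ is killed by some transition map $(f,g)\colon[n]\to[m]$ with $m>n$, then $0 = (f,g)_\as v = \sigma_\as\bigl((\tau_{m-n})_n(v)\bigr)$, and since $\sigma_\as$ is invertible we get $(\tau_{m-n})_n(v)=0$; injectivity of $\tau_{m-n}$ then forces $v=0$.

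The only points requiring care are the identity $\Sigma_j(\tau_V)=\tau_{\Sigma_j V}$, which is precisely what lets injectivity of $\tau_V$ propagate to injectivity of every $\tau_b$, and the transitivity of the $G_m$-action on morphism sets; I expect the former to be the main (though still routine) verification, while the latter is a mechanical unwinding of composition in $\C$. Everything else is formal.
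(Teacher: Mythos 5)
Your strategy is close in spirit to the paper's: both reduce the problem to single-step transitions via transitivity of the left $G_m$-action on $\Hom_\C([n],[m])$, the paper by descending from an arbitrary torsion element to one killed in a single step, you by showing injectivity of $\tau_V$ propagates to every $\tau_b$. The outline is sound, but the specific verification you flag as the main point is false: $(f_n,\mathbf{1})_+ \neq (f_{n+1},\mathbf{1})$. Unwinding Definition \ref{shift}, $(f_n)_+$ sends $n+1 \mapsto n+2$, whereas $f_{n+1}$ sends $n+1 \mapsto n+1$; the two differ by the transposition of $n+1$ and $n+2$. Consequently $\Sigma_j\tau_V$ and $\tau_{\Sigma_j V}$ are genuinely different morphisms, and the claimed identity $\Sigma_j(\tau_V)=\tau_{\Sigma_j V}$ does not hold.

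The error is local and easily repaired, because what you actually need is only that each $\tau_b$ is injective once $\tau_V$ is. The cleanest fix bypasses $\tau_{\Sigma_j V}$ entirely: the standard inclusions satisfy $f^b_n = f_{n+b-1}\circ\cdots\circ f_{n+1}\circ f_n$, so $\tau_b = (\Sigma^{b-1}\tau_V)\circ\cdots\circ(\Sigma\tau_V)\circ\tau_V$, and each factor is injective because $\Sigma$ is exact. Alternatively, one checks $(f_n)_+ = \sigma\circ f_{n+1}$ for a suitable $\sigma\in G_{n+2}$, so $\tau_{\Sigma V}$ and $\Sigma\tau_V$ agree up to an automorphism and hence have the same kernel, which also lets the propagation go through. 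With either patch the rest of your argument, including the factorization $(f,g)=\sigma\circ(f^{m-n}_n,\mathbf{1})$ via transitivity and the concluding contradiction, is correct.
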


\begin{proof}
The key observation of this proof is that an element $v \in V_n$ is in the kernel of some $(f,g)_\as:V_n \rightarrow V_m$ if and only if it is in the kernel of every transition map from $n$ to $m$. Indeed, this follows from the fact that the left action of $G_m$ on $\Hom_{\C}([n],[m])$ is transitive. Moreover, we can produce, from $v$, a torsion element in $V_{m-1}$ by viewing $(f,g)$ as a composition of a map from $[n]$ to $[m-1]$ and a map from $[m-1]$ to $[m]$. It follows immediately from this that $V$ has a torsion element if and only if it has an element which is in the kernel of a transition map of the form $(f_n,\mathbf{1})$, where $f_n:[n] \rightarrow [n+1]$ is the standard inclusion and $\mathbf{1}$ is the trivial map into $G$. We recall that the map $\tau:V \rightarrow \Sigma V$ is induced by these transition maps, and therefore the proposition follows.\\
\end{proof}

\begin{lemma}\label{fintor}
Let $V$ be a finitely generated torsion module. Then $\td(V) < \infty$.\\
\end{lemma}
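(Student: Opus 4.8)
The plan is to argue that a finitely generated torsion module has finite support, since for finitely supported modules the support equals the torsion degree (as noted in Definition \ref{regdef}). First I would use Nakayama's Lemma (Proposition \ref{nak}) together with the remark following it to reduce to the case where $V$ is a quotient of a single basic filtered module $M(W)$ with $W$ a finitely generated $kG_n$-module; more precisely, $V$ is a quotient of a finite direct sum $\bigoplus_{i} M(n_i)^{m_i}$, and since torsion and finite support are both preserved under quotients and finite direct sums, it suffices to treat the case $V = M(n)$. But $M(n)$ is free, hence torsion free (every basic filtered module is torsion free, since $\tau_{M(W)}$ is a split injection by Remark \ref{dervprop}), so it is never torsion unless $M(n) = 0$. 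This shows a nonzero finitely generated torsion module cannot itself be a basic filtered module, so I must instead track the support directly through a presentation.

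So the cleaner route: let $V$ be finitely generated and torsion, and choose a surjection $\pi \colon P \twoheadrightarrow V$ with $P = \bigoplus_{i \in I} M(n_i)$ a finite free module, generated in degrees bounded by some $d = \gd(V)$. Let $K = \ker \pi$, which is finitely generated by the Noetherian property (Theorem, \cite{SS2} Corollary 1.2.2). The key point is that $V$ torsion forces $K$ to be ``large'' in high degrees: for each generator of $P$ sitting in degree $n_i$, its image in $V$ is a torsion element, hence killed by some transition map, hence its image under $\tau_{b}$ (for suitable $b$) lands in $K$. Using Remark \ref{dervprop}, which says $\gd(\coker(\tau_b)) < \gd(V')$ for any finitely generated $V'$ and any $b \geq 1$, I would apply this to the torsion-free module $P$: the submodule $\tau_b(P) \subseteq \Sigma_b P$ has the property that in degrees $\gg \gd(P)$ every element of $\Sigma_b P$ is hit, i.e. $P$ and its shifts agree with their images in high enough degree. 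Combining this with the fact that $K$ contains, in each sufficiently large degree, the image of every torsion relation, one concludes that $\pi$ is zero in all degrees exceeding $\gd(V) + c$ for a constant $c$ depending only on $|G|$ and the number of generators — hence $V_m = 0$ for $m \gg 0$.

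Alternatively, and perhaps most efficiently, I would invoke Theorem \ref{homacyclic}: a finitely generated torsion module $V$ is certainly not $\sharp$-filtered (unless zero), and by Nagpal's theorem $\Sigma_b V$ is $\sharp$-filtered for $b \gg 0$; but $\Sigma_b V$ is again torsion (shifting preserves torsion, as transition maps are carried to transition maps) and a $\sharp$-filtered torsion module must be $0$ since basic filtered modules are torsion free. Therefore $\Sigma_b V = 0$ for $b \gg 0$, which is exactly the statement that $V_m = 0$ for all $m \geq b$, i.e. $\td(V) = \operatorname{support}(V) < \infty$. This last argument is the shortest, and the main (very mild) obstacle is simply verifying the two bookkeeping facts that the shift functor preserves torsion and that a torsion $\sharp$-filtered module vanishes; both are immediate from the definitions and Remark \ref{dervprop}. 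I would present the short argument as the proof and relegate the $\tau_b$-based argument to a remark if a more self-contained proof is desired.
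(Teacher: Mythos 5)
Your third argument — the one you propose to actually present — is correct: $\Sigma_b V$ is torsion for every $b$ (the key observation in the lemma immediately preceding this one shows that a torsion element of $V_n$ is killed by \emph{every} transition map into a high enough degree, in particular by those induced by the self-embedding, so shifting preserves torsion), a nonzero $\sharp$-filtered module always has a nonzero basic filtered submodule $M(W^{(j)})$ at the bottom of its filtration and hence contains a nonzero torsion-free submodule, and so Nagpal's theorem forces $\Sigma_b V = 0$ for $b \gg 0$, i.e.\ $V$ is finitely supported, giving $\td(V) < \infty$.

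This is, however, a genuinely different route from the paper's, and substantially heavier. The paper's proof is entirely elementary: pick a finite generating set $\{v_i\}$, use the same key observation to find for each $i$ an integer $n_i$ past which $v_i$ is annihilated by every transition map, set $N = \max_i n_i$, and observe that every element of $V_n$ for $n \geqslant N$ is a $k$-linear combination of images of the $v_i$ under transition maps, hence zero. There is no appeal to Nagpal's theorem or to $\sharp$-filtered modules at all. Your route buys a slick two-line argument, but at the cost of invoking one of the deepest structural theorems about $\C$-modules to establish a fact that falls out of the definition of ``finitely generated torsion module.'' For logical economy the paper's version is preferable, and would in any case be the one you would want if this lemma were needed as an ingredient in a proof of Nagpal's theorem itself. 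One further small point: your intermediate presentation-based sketch (with $P \twoheadrightarrow V$, kernel $K$, and $\tau_b$) is under-specified at the step ``one concludes that $\pi$ is zero in all degrees exceeding $\gd(V)+c$''; as written, Remark \ref{dervprop} does not directly deliver this, and the step would need the same generating-set argument the paper uses. Since you defer to the Nagpal-based argument this does not affect the validity of your proposal, but it should not be advertised as a self-contained alternative without filling that gap.
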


\begin{proof}
By assumption, there is a finite set $\{v_i\} \subseteq \sqcup V_n$ which generates $V$. By the observations in the proof of the previous lemma, for each $i$ we may find some $n_i$ such that $v_i$ is in the kernel of every transition map into $V_n$ with $n \geqslant n_i$. Because there are finitely many generators, we may find some $N$ such that all generators are in the kernel of transition maps into $V_n$ whenever $n \geqslant N$. It follows that $\td(V) \leq N$.\\
\end{proof}

We begin with the following theorem, which expands upon some of the work in previous sections.\\

\begin{theorem} \label{orthorel}
Let $V$ be a finitely generated $\C$-module. Then
\begin{enumerate}
\item $T$ is a torsion module if and only if $\Ext _{k\C}^i (T, V) = 0$ for all basic filtered modules $V$ and all $i \geqslant 0$.
\item $V$ is a $\sharp$-filtered module if and only if $\Ext _{k\C}^i (kG_s, V) = 0$ for all $s, i \geqslant 0$
\item $V$ is an injective module if and only if $\Ext _{k\C}^1 (W, V) = 0$ whenever $W$ is a basic filtered module or $W$ is a finitely generated torsion module.\\
\end{enumerate}
\end{theorem}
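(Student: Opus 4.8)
The plan is to prove the three statements in order, using the earlier structural results as black boxes. For statement (1), the forward direction is the easy half: if $T$ is torsion it is generated in finitely many degrees (Lemma \ref{fintor}), so by taking a filtration with basic-filtered-free-like pieces it reduces to showing $\Ext^i_{k\C}(kG_s, V) = 0$ for $V$ basic filtered, which is exactly what was proved during the proof of Theorem \ref{equivdepth}. For the converse, I would argue contrapositively: if $T$ is not torsion, then its torsion-free part $T_F$ is nonzero, and from the exact sequence $0 \to T_T \to T \to T_F \to 0$ together with a nonzero map realized in $H_0$, I can produce a nonzero map $T \to F$ for a suitable basic filtered module $F$. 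Concretely, a nonzero element of $H_0(T_F)$ in some degree $n$ gives a surjection $T_F \twoheadrightarrow$ (something generated in degree $n$) and hence a nonzero composite $T \to M(W)$ for an appropriate $kG_n$-module $W$; this shows $\Hom_{k\C}(T, M(W)) \neq 0$, contradicting the hypothesis at $i = 0$.

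Statement (2) is the heart of Theorem \ref{charfil}, and most of it is already available. The forward direction follows from the proof of Theorem \ref{equivdepth} (which established exactly $\Ext^i_{k\C}(kG_s, V) = 0$ for $V = M(W)$, hence for all $\sharp$-filtered $V$ by the long exact sequence applied to the defining filtration). For the converse, I would combine the vanishing of $\Ext^1$ with Theorem \ref{homacyclic}: the condition $\Ext^1_{k\C}(kG_s, V) = 0$ for all $s$ should be dualized via a universal-coefficients / change-of-rings argument to force $H_1(V) = 0$. The cleanest route is to note that $kG_s$ has a projective resolution over $k\C$ built from free modules $M(\text{proj } kG_t)$, so $\Ext^\bullet_{k\C}(kG_s, V)$ and $\Tor_\bullet^{k\C}(kG_s, V)$ (equivalently the homology $H_\bullet(V)$) are computed by dual complexes; vanishing of the former in all degrees $s$ forces vanishing of $H_i(V)$ in all degrees, and then Theorem \ref{homacyclic} gives that $V$ is $\sharp$-filtered. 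The subtle point here is handling the case where $k$ is not semisimple — but since we only need $\Tor_1$/$H_1$ to vanish (condition (3) of Theorem \ref{homacyclic}), it should suffice to extract that single vanishing.

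Statement (3) is where I expect the main obstacle. The forward direction is immediate: injectives have vanishing $\Ext^1$ against everything. For the converse, suppose $\Ext^1_{k\C}(W, V) = 0$ for all basic filtered $W$ and all finitely generated torsion $W$. First, the torsion-module hypothesis applied with $W$ running over all $kG_s$ gives, by statement (2) applied in a limiting sense — or rather by a direct argument — that $V$ has an injective torsion part splitting off; more carefully, one shows the torsion part $V_T$ is injective as a torsion module and splits, reducing to $V$ torsion free. Then the basic-filtered hypothesis, via Theorem \ref{homacyclic} and Theorem \ref{filcombounds}, should force $V$ itself to be $\sharp$-filtered, and then one must upgrade "$\sharp$-filtered and $\Ext^1$-acyclic against all torsion" to "injective". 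This last upgrade is the crux: one needs that a $\sharp$-filtered module with no $\Ext^1$ from torsion modules is genuinely injective in $\C\Mod$, which requires checking the lifting property against \emph{all} finitely generated modules, not just torsion and $\sharp$-filtered ones. The tool for this is Theorem \ref{filcombounds}: an arbitrary finitely generated module $U$ sits in a finite complex whose terms are $\sharp$-filtered and whose cohomologies are finitely supported (hence torsion), so dimension-shifting along that complex expresses $\Ext^1_{k\C}(U, V)$ in terms of $\Ext^\bullet_{k\C}(\sharp\text{-filtered}, V)$ and $\Ext^\bullet_{k\C}(\text{torsion}, V)$, both controlled by hypothesis. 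Making this dimension-shift bookkeeping precise — tracking which $\Ext$-degrees appear and confirming they all vanish — is the step I would spend the most care on.
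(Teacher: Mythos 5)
Your plan for statement (3) matches the paper's: approximate an arbitrary finitely generated module by the finite complex of $\sharp$-filtered modules from Theorem \ref{filcombounds}, break it into short exact sequences whose outer terms are torsion or $\sharp$-filtered, and dimension-shift. That part is fine.

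There are, however, two genuine problems in your treatment of (1) and (2).

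For the converse of (1), you propose to produce a nonzero map $T \to M(W)$ by surjecting $T_F$ onto ``something generated in degree $n$''. This does not work: a quotient of $T_F$ generated in a single degree $n$ is an arbitrary such module, not a basic filtered module $M(W)$, and there is no reason a nonzero quotient map into it factors through or equals a map into some $M(W)$. The paper instead goes the other way --- it embeds $T_F$ \emph{into} $\Sigma_N T_F$ for $N\gg 0$, which is $\sharp$-filtered by Nagpal's theorem, yielding a nonzero composite $T\twoheadrightarrow T_F\hookrightarrow\Sigma_N T_F$. One then observes that if $\Hom_{k\C}(T,V)=0$ for all basic filtered $V$, then $\Hom_{k\C}(T,F)=0$ for every $\sharp$-filtered $F$ by applying the left exact functor $\Hom_{k\C}(T,-)$ along the filtration of $F$; this is the bridge your argument is missing. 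Also, for the forward direction of (1), ``reduces to $\Ext^i(kG_s,V)=0$'' is not automatic: a torsion module supported in a single degree $N$ is an arbitrary finitely generated $kG_N$-module, not a sum of copies of $kG_N$. The paper handles this by a second induction: present such a module as a cokernel $kG_N^m\to T'$ with kernel $\Omega$ still supported in degree $N$, get $\Ext^1(T',V)=0$ from Theorem \ref{equivdepth} and torsion-freeness of $V$, and dimension-shift $\Ext^{i+1}(T',V)\cong\Ext^i(\Omega,V)$, closing the induction because $T'$ and $\Omega$ both run over modules supported in one degree.

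For the converse of (2), your proposed route --- dualizing a projective resolution to compare $\Ext^\bullet(kG_s,V)$ with $\Tor_\bullet(kG_s,V)$ and thereby force $H_1(V)=0$ --- is both unnecessary and not obviously available over a general Noetherian base (there is no universal-coefficient isomorphism between these $\Ext$ and $\Tor$ groups for $k\C$-modules). The paper's proof is essentially one line: the hypothesis says exactly that $\cdepth(V)=\infty$, Theorem \ref{equivdepth} converts this to $\depth(V)=\infty$, and Theorem \ref{depthclass} then says $V$ is $\sharp$-filtered. You should use the depth machinery you already cite for the forward direction rather than inventing a duality.
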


\begin{proof}
Let $T$ be a finitely generated torsion module. Note that $\td(T) < \infty$ by Lemma \ref{fintor}. Let $N = \td(T)$, and let $T'$ be the submodule of $T$ generated by $T_N$. Then we have an exact sequence
\[
0 \rightarrow T' \rightarrow T \rightarrow T'' \rightarrow 0
\]
where $\td(T'') < \td(T)$. Applying the functor $\Hom_{k\C}(\bullet,V)$, we find that the proposition follows if we can show $\Ext^i_{k\C}(T',V) = \Ext^i_{k\C}(T'',V) = 0$. By induction on the torsion degree, we only need to show that $\Ext^i_{k\C}(T',V) = 0$.\\

Because $T$ was finitely generated, it follows that $T'$ is finitely generated by the Noetherian property. Therefore there is an exact sequence
\[
0 \rightarrow \Omega \rightarrow kG_N^m \rightarrow T' \rightarrow 0
\]
for some module $\Omega$ which is also supported exclusively in degree $N$. Applying the functor $\Hom_{k\C}(\bullet,V)$, and using the facts that $V$ is torsion free and Theorem \ref{equivdepth}, we find that
\[
0 = \Hom_{k\C}(\Omega,V) \rightarrow \Ext^1_{k\C}(T',V) \rightarrow 0 = \Ext^1_{k\C}(kG_N^m,V).
\]
Therefore $\Ext^1_{k\C}(T',V) = 0$. Looking further along the above exact sequence, we also conclude that $\Ext^{i+1}_{k\C}(T',V) \cong \Ext^i_{k\C}(\Omega,V)$ for all $i \geqslant 1$. Using the fact that $T'$ was arbitrary, and that $\Ext^1_{k\C}(T',V) = 0$, the only if direction of the first statement now follows by induction.\\

Now we prove the if direction of the first statement; that is, $\Ext _{k\C}^i (T, V) = 0$ for all $\sharp$-filtered modules $V$ implies that $T$ is torsion. But this is clear. Indeed, if $T$ is not torsion, its torsionless part $T_F \neq 0$. Then $\Sigma_N T_F$ is a nonzero $\sharp$-filtered module for $N \gg 0$, and we get a nonzero map $T \to T_F \to \Sigma_N T_F$ where the first component is surjective and the second component is injective.\\

The second statement is simply Theorem \ref{equivdepth}, along with Theorem \ref{depthclass}.\\

One direction of the third is trivial. To prove the other direction, Theorem \ref{filcombounds} implies that for every finitely generated $k\C$-module $W$ there is a finite complex of $\sharp$-filtered modules
\begin{equation*}
0 \to W \to F^0 \to F^1 \to \ldots \to F^n \to 0.
\end{equation*}
Theorem  \ref{filcombounds} also tells us that all homologies in this complex are finitely generated torsion modules. This complex gives rise to a few short exact sequences
\begin{align*}
& 0 \to W_T \to W \to W_F \to 0,\\
& 0 \to W_F \to F^0 \to W^{(1)} \to 0,\\
& \ldots,\\
& 0 \to W^{(n)}_F \to F^n \to W^{(n+1)} \to 0,
\end{align*}
where $W^{(n+1)}$ is torsion. By assumption, one recursively deduces that $\Ext _{k\C}^i (W^{(s)}, V) = 0$ for all $i \geqslant 0$ and $0 \leqslant s \leqslant n+1$, where $W^{(0)} = W$. Therefore, $V$ is injective since $W$ is arbitrarily chosen.\\
\end{proof}

If we only consider extension groups with positive degrees, we have:\\

\begin{theorem}
Let $V$ be a finitely generated $\C$-module. Then
\begin{enumerate}
\item $\Ext _{k\C}^i (V, F) = 0$ for all basic filtered modules $F$ and all $i \geqslant 1$ if and only if $\Sigma_N V$ is a projective module for $N \gg 0$.
\item $\Ext _{k\C}^i (T, V) = 0$ for $i \geqslant 1$ and all finitely generated torsion modules $T$ if and only if $V$ is a direct sum of an injective torsion module and a $\sharp$-filtered module.\\
\end{enumerate}
\end{theorem}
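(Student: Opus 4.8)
The plan is to prove each of the two statements by leveraging the machinery already assembled, especially Theorem~\ref{charfil}, Theorem~\ref{homorth}, Theorem~\ref{filcombounds}, and the Eckmann--Shapiro lemma. For statement (1), first I would observe that if $\Sigma_N V$ is projective for $N \gg 0$, then since projectives are direct sums of basic filtered modules and since $\Ext^i_{k\C}(-, F)$ for basic filtered $F$ can be computed after enough shifts via Eckmann--Shapiro (using $\Ext^i_{k\C}(\Sigma_N V, F) \cong \Ext^i_{k\C}(V, R_N F)$ and the fact that $R$ preserves $\sharp$-filtered modules by Theorem~\ref{bigrepthm}), the vanishing of higher $\Ext$ follows because a projective module has no higher self-extensions into anything. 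The substantive direction is the converse: assuming $\Ext^i_{k\C}(V, F) = 0$ for all basic filtered $F$ and all $i \geqslant 1$, deduce that $V$ eventually becomes projective. Here I would apply Theorem~\ref{filcombounds} to get a finite resolution $0 \to V \to F^0 \to \cdots \to F^n \to 0$ with each $F^i$ $\sharp$-filtered and all cohomologies finitely supported; dimension-shifting along the induced short exact sequences, together with the hypothesis, should force the torsion cohomologies to vanish after shifting, so that $\Sigma_N V$ admits a finite $\sharp$-filtered coresolution that is actually exact, i.e. a finite resolution of $\Sigma_N V$ by $\sharp$-filtered modules with no homology. Combined with the hypothesis $\Ext^{\geqslant 1}_{k\C}(\Sigma_N V, F) = 0$ and the characterization of $\sharp$-filtered modules via $\Tor$ (Theorem~\ref{charfil}), a now-standard argument shows $\Sigma_N V$ is both $\sharp$-filtered and has finite projective dimension, hence projective by the results already cited on which $\sharp$-filtered modules have finite projective dimension --- more directly, one inducts down the coresolution using $\Ext^1_{k\C}(\Sigma_N V, -) = 0$ against $\sharp$-filtered targets to split it off.

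For statement (2), the ``if'' direction is immediate: an injective torsion module $T_0$ has $\Ext^{\geqslant 1}_{k\C}(T, T_0) = 0$ for all $T$ by definition of injectivity, and a $\sharp$-filtered module $F$ has $\Ext^i_{k\C}(T, F) = 0$ for all $i \geqslant 0$ and all finitely generated torsion $T$ by statement (1) of Theorem~\ref{homorth} (equivalently, Theorem~\ref{charfil}(5)); so the direct sum has vanishing higher $\Ext$ from torsion modules. The ``only if'' direction is the crux. Assuming $\Ext^i_{k\C}(T, V) = 0$ for $i \geqslant 1$ and all finitely generated torsion $T$, I would first split off the torsion part via $0 \to V_T \to V \to V_F \to 0$; applying $\Hom_{k\C}(T, -)$ and using that $\Ext^1_{k\C}(T, V_T)$ controls whether this sequence splits, I would argue that (after checking $V_T$ is injective as a torsion module --- which follows since $\Ext^1_{k\C}(T', V_T) \cong \Ext^1_{k\C}(T', V)$ for torsion $T'$, using $\Hom$ and $\Ext^1$ of torsion modules into the torsion-free $V_F$ vanish, the latter by the ``only if'' of Theorem~\ref{orthorel}(1)) the sequence splits and $V_T$ is an injective torsion module. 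It then remains to show $V_F$ is $\sharp$-filtered: since $V_F$ is torsion free with $\Ext^{\geqslant 1}_{k\C}(T, V_F) = 0$ for all finitely generated torsion $T$, and in particular $\Ext^i_{k\C}(kG_s, V_F) = 0$ for all $s$ and $i \geqslant 1$, while $\Ext^0_{k\C}(kG_s, V_F) = \Hom_{k\C}(kG_s, V_F) = 0$ because $V_F$ is torsion free --- this is exactly condition (4) of Theorem~\ref{charfil}, so $V_F$ is $\sharp$-filtered.

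The main obstacle I anticipate is in statement (1): passing from ``$\Ext^{\geqslant 1}$ into all basic filtered modules vanishes'' to ``some shift is projective'' requires carefully controlling the finitely supported cohomologies of the complex $\filcom V$ and confirming that the hypothesis kills them after shifting --- one must check that the relevant $\Ext$ groups against the torsion cohomology modules are forced to vanish, which uses both Eckmann--Shapiro and the interplay between shifts and $\sharp$-filtered modules, and then one must correctly invoke the earlier structural result that a $\sharp$-filtered module of finite projective dimension is projective (or argue the splitting directly). The torsion-vs-torsion-free bookkeeping in statement (2) is routine by comparison, resting entirely on Theorem~\ref{charfil}(4) and Theorem~\ref{orthorel}(1) once the splitting of the torsion part is established.
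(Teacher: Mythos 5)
Your treatment of statement (2) is essentially the paper's, but one justification is wrong and another point is elided. You invoke Theorem~\ref{orthorel}(1) to conclude $\Ext^1_{k\C}(T', V_F) = 0$; that result concerns extensions into $\sharp$-filtered modules, and $V_F$ is only known to be torsion free at that stage, so the claimed isomorphism $\Ext^1_{k\C}(T', V_T) \cong \Ext^1_{k\C}(T', V)$ is not available. Happily it is also not needed: $\Hom_{k\C}(T', V_F) = 0$ (torsion into torsion free) already gives an injection $\Ext^1_{k\C}(T', V_T) \hookrightarrow \Ext^1_{k\C}(T', V) = 0$, which is all one uses. Separately, to conclude that the sequence $0 \to V_T \to V \to V_F \to 0$ splits you need $V_T$ to be injective in $\C\module$, not merely ``injective as a torsion module''; the paper supplies this step by noting that a finitely generated torsion module is injective if and only if it is injective as a representation of the finite full subcategory on objects $[n]$ with $n \leqslant \td(V_T)$, which in turn is detected by $\Ext^1_{k\C}(T, V_T) = 0$ for all torsion $T$. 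Once the splitting is in hand, your use of Theorem~\ref{charfil}(4) to see that $V_F$ is $\sharp$-filtered is correct.

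The genuine gap is in the hard direction of (1). The excursion through $\filcom V$ is unnecessary: that $\Sigma_N V$ is $\sharp$-filtered for $N \gg 0$ is Nagpal's theorem outright. The substantive issue is the passage from ``$\Sigma_N V$ is $\sharp$-filtered and $\Ext^{\geqslant 1}_{k\C}(\Sigma_N V, F) = 0$ for all $\sharp$-filtered $F$'' to ``$\Sigma_N V$ is projective,'' and neither of your suggestions supplies it. The assertion that a $\sharp$-filtered module of finite projective dimension is projective is false over a general Noetherian ring (e.g.\ $M(\mathbb{Z}/2)$ over $\mathbb{Z}\FI$ has projective dimension one yet is not projective), and in any case you never derive finite projective dimension from the hypothesis. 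The remark about ``inducting down the coresolution'' does not describe a concrete step, since once $\Sigma_N V$ is $\sharp$-filtered there is no nontrivial coresolution to descend. The paper argues directly: if $\tilde V = \Sigma_N V$ is $\sharp$-filtered but not projective, its top basic cofactor $V'' \cong M(V''_n)$ has $V''_n$ nonprojective over $kG_n$, so there is a nonsplit extension $0 \to W_n \to U_n \to V''_n \to 0$; applying the exact functor $M(\bullet)$ gives $\Ext^1_{k\C}(V'', M(W_n)) \neq 0$, which propagates to $\Ext^1_{k\C}(\tilde V, M(W_n)) \neq 0$ because $\Hom_{k\C}(V', M(W_n)) = 0$ by generating-degree considerations, and then via $R_N$ and Eckmann--Shapiro to $\Ext^1_{k\C}(V, R_N(M(W_n))) \neq 0$ with $R_N(M(W_n))$ still $\sharp$-filtered, a contradiction. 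A cleaner route in the spirit of what you gesture at is available but must be made explicit: choose a surjection $P \twoheadrightarrow \Sigma_N V$ with $P$ projective; the kernel $\Omega$ is $\sharp$-filtered by Theorem~\ref{homacyclic} and the long exact sequence for $H_\bullet$, the hypothesis together with Eckmann--Shapiro and Theorem~\ref{bigrepthm} gives $\Ext^1_{k\C}(\Sigma_N V, \Omega) \cong \Ext^1_{k\C}(V, R_N \Omega) = 0$, so the extension splits and $\Sigma_N V$ is a summand of $P$. This splitting argument is the step that is absent from your write-up.
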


\begin{proof}
If $\Sigma_N V$ is a projective module, then one has
\begin{equation*}
0 = \Ext _{k\C}^i (\Sigma_N V, F) \cong \Ext _{k\C}^i (V, R_N(F)).
\end{equation*}
But $F$ is isomorphic to a direct summand of $R_N(F)$ by Theorem \ref{coind}. Consequently, $\Ext _{k\C}^i (V, F) = 0$ for all $i \geqslant 1$ and all $\sharp$-filtered modules $F$.\\

Conversely, for $N \gg 0$, we know that $\tilde{V} = \Sigma_N V$ is a $\sharp$-filtered module. We show that if it is not projective, then there exists a $\sharp$-filtered module $F$ such that $\Ext _{k\C}^1 (V, F) \neq 0$. Suppose that $\gd(\tilde{V}) = n \geqslant 0$. Since $\tilde{V}$ is filtered, there is a short exact sequence
\begin{equation*}
0 \to V' \to \tilde{V} \to V'' \to 0
\end{equation*}
such that $V'$ is the submodule generated by $\bigoplus _{i < n} V_i$ and $V''$ is a basic filtered module generated in $n$. Without loss of generality we can assume that $V''$ is not projective since otherwise we can replace $\tilde{V}$ by $V'$ and repeat the above process.\\

Now consider $V''$ which is isomorphic to $M(V''_n)$. Since $V''$ is not projective, $V''_n$ as a $kG_n$-module cannot be projective. Therefore, we can find a finitely generated $kG_n$-module $W_n$ such that $\Ext _{kG_n}^1 (V''_n, W_n) \neq 0$. That is, there is a non-split exact sequence
\begin{equation*}
0 \to W_n \to U_n \to V''_n \to 0.
\end{equation*}
Applying the exact functor $M(\bullet)$ we get a non-split exact sequence
\begin{equation*}
0 \to M(W_n)\to M(U_n) \to M(V''_n)  \to 0.
\end{equation*}
Consequently, $\Ext _{k\C}^1 (M(V''_n), F) \neq 0$, where $F = M(W_n)$. Moreover, applying the functor $\Hom _{k\C} (\bullet, F)$ to the exact sequence
\begin{equation*}
0 \to V' \to \tilde{V} \to V'' \to 0
\end{equation*}
we deduce that $\Ext _{k\C}^1 (\tilde{V}, F) \neq 0$ since $\Ext _{k\C}^1 (V'', F) \neq 0$ and $\Hom _{k\C} (V', F) = 0$ because $\gd(V') < \gd(F) = n$. But $\tilde{V} = \Sigma_N V$. Using the adjunction, one deduces that $\Ext _{k\C}^1 (V, R_N(F)) \neq 0$. However, $R_N(F)$ is still filtered. This contradicts the given condition. In this way we prove the first statement.\\

Now we turn to the second statement. The if direction is clear. For the other direction, we consider the short exact sequence
\begin{equation*}
0 \to V_T \to V \to V_F \to 0.
\end{equation*}
Applying $\Hom_{k\C} (T, \bullet)$ and noting that $\Hom_{k\C} (T, V_F) = 0$, by the given assumption, we deduce that $\Ext _{k\C}^1 (T, V_T) = 0$. But the finitely generated torsion module $V_T$ is injective if and only if it viewed as a representation of the finite full subcategory with objects $n \leqslant \td(V_T)$ is still injective (see \cite[Section 2,4]{GL}), if and only if $\Ext _{k\C}^1 (T, V_T) = 0$ for all torsion modules. From this observation we conclude that $V_T$ is injective, so $V \cong V_T \oplus V_F$. Moreover, from the long exact sequence we conclude that $\Ext _{k\C}^1 (T, V_F) = 0$ for all $i \geqslant 0$ and all torsion modules. By the second statement of the previous theorem, $V_F$ is a $\sharp$-filtered module.\\
\end{proof}

These two results give another classification of finitely generated injective modules when $k$ is a field of characteristic 0.\\

\begin{corollary}
If $k$ is a field of characteristic 0, then every finitely generated projective module is injective as well, and every finitely generated injective module is a direct sum of a finite dimensional injective module and a projective module.\\
\end{corollary}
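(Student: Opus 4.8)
The plan is to derive this corollary purely formally from Theorem~\ref{orthorel}, the theorem immediately preceding the corollary, Proposition~\ref{yoneda}'s consequence that $M(W)$ is projective if and only if $W$ is, and the single new input that $kG_n = k[G\wr\mathfrak{S}_n]$ is semisimple when $k$ is a field of characteristic $0$. So the first step records what semisimplicity buys: by Maschke's theorem every (finitely generated) $kG_n$-module is projective, hence every basic filtered module $M(W)$ is projective, and moreover every finitely generated $\sharp$-filtered module $V$ is projective. For the latter, in the defining filtration $0 = V^{(-1)} \subseteq \cdots \subseteq V^{(n)} = V$ each quotient $V^{(i)}/V^{(i-1)} \cong M(W^{(i)})$ is projective, so each sequence $0 \to V^{(i-1)} \to V^{(i)} \to M(W^{(i)}) \to 0$ splits; inducting from $V^{(0)} = M(W^{(0)})$ gives $V \cong \bigoplus_i M(W^{(i)})$, a direct sum of projectives. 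Since conversely every finitely generated projective module is a direct sum of basic filtered modules by \cite[Proposition~2.13]{R}, in characteristic $0$ the finitely generated projective modules are exactly the finitely generated $\sharp$-filtered modules.

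For the first assertion, let $P$ be a finitely generated projective module; I would invoke the injectivity criterion of Theorem~\ref{orthorel}(3), which reduces us to showing $\Ext^1_{k\C}(W,P) = 0$ whenever $W$ is a basic filtered module or a finitely generated torsion module. If $W$ is basic filtered it is projective by the first step, so this $\Ext$ vanishes. If $W$ is finitely generated torsion, write $P \cong \bigoplus_j M(W_j)$ as a finite direct sum and apply the forward implication of Theorem~\ref{orthorel}(1), which gives $\Ext^i_{k\C}(W,M(W_j)) = 0$ for all $i \geqslant 0$; since $\Ext$ commutes with finite direct sums in its second argument, $\Ext^1_{k\C}(W,P) = 0$. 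Hence $P$ is injective.

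For the second assertion, let $I$ be a finitely generated injective module. Then $\Ext^i_{k\C}(T,I) = 0$ for every $i \geqslant 1$ and every finitely generated torsion module $T$ (injectivity makes $\Hom_{k\C}(-,I)$ exact), so part (2) of the theorem preceding the corollary supplies a decomposition $I \cong I' \oplus F$ with $I'$ an injective torsion module and $F$ a $\sharp$-filtered module. By the first step $F$ is projective, while $I'$, being a direct summand of the finitely generated module $I$, is finitely generated by the Noetherian property, hence has finite torsion degree by Lemma~\ref{fintor}; over a field this forces $I'$ to be finite dimensional (and it is injective, being a summand of $I$). This is precisely the claimed form.

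I do not expect a genuine obstacle: the entire argument is a bookkeeping exercise over the structural theorems of this section. The only points needing a moment's care are the reduction ``$\sharp$-filtered $\Rightarrow$ direct sum of basic filtered modules'' in characteristic $0$ (which is where semisimplicity of $kG_n$ enters) and the fact that the splitting $I \cong I' \oplus F$ is handed to us directly by the preceding theorem, so no comparison with the canonical torsion--torsionfree sequence $0 \to I_T \to I \to I_F \to 0$ is required.
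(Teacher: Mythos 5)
Your proposal is correct and follows the same route as the paper's own proof: in characteristic $0$ identify finitely generated $\sharp$-filtered modules with finitely generated projectives (via Maschke), then invoke Theorem~\ref{orthorel}(1)--(3) and part (2) of the preceding theorem exactly as you do. The only difference is that you spell out two steps the paper leaves implicit — the filtration-splitting argument showing $\sharp$-filtered $\Rightarrow$ projective in characteristic $0$, and the observation that a finitely generated injective torsion summand is finite dimensional over a field — which is a welcome tightening rather than a change of method.
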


\begin{proof}
Let $P$ be a finitely generated projective module. We know that $\Ext _{k\C}^i (kG_s, P) = 0$ for all $i, s \geqslant 0$ since projective modules and $\sharp$-filtered modules coincide in this case. Indeed, this follows from the fact that $M(W)$ is projective if and only if $W$ is projective. For the same reason, clearly one has $\Ext _{k\C}^i (F, P) = 0$ for all finitely generated $\sharp$-filtered modules $F$. By (3) of Theorem \ref{orthorel}, $P$ is injective as well.\\

If $I$ is an injective module, then by the second statement of the previous theorem, $I$ is a direct sum of a finite dimensional injective module and a $\sharp$-filtered module, which in this case is projective.\\
\end{proof}

This fact was proven for $\FI$-modules by Sam and Snowden in \cite[Theorem 4.2.5]{SS3}. The result was generalized to $\C$-modules by the Gan and the first author in \cite[Theorem 1.7]{GL}.\\

\begin{remark}
Note that the results in this section seem to indicate that $\C$-modules are a natural candidate for a tilting theory. We do not pursue this further here, although we note that it is a possible area for further research.\\
\end{remark}

\subsection{Injective objects in the category $\C\module$}

In this section we substantiate the claim made throughout the paper that the category $\C\module$ over a Noetherian ring will often times not have sufficiently many injective objects. In fact, we will prove the stronger statement that in many cases the category does not have any torsion free injective objects. In \cite[Theorem 1.7]{GL}, as well as \cite[Theorem 4.3.1]{SS3}, It is shown that the category $\C\module$ does have sufficiently many injective objects whenever $k$ is a field of characteristic 0. The main theorem of this section therefore represents a drastic departure from the cases which were previously studied.\\

We begin with some homological lemmas, which follow from the work of the previous sections.\\

\begin{lemma} \label{injsharp}
Let $V$ be a finitely generated, torsion free injective $\C$-module. Then $V$ is $\sharp$-filtered.\\
\end{lemma}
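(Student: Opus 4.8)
The plan is to deduce this directly from the homological characterizations already established. Since $V$ is torsion free and finitely generated, Theorem \ref{homacyclic} tells us it suffices to show that $H_1(V) = 0$, i.e. that $\Tor_1^{k\C}(kG_s, V) = 0$ for all $s \geqslant 0$. So the whole task reduces to converting injectivity into a statement about these low-degree Tor groups, or equivalently about the vanishing of certain $\Ext$ groups via Theorem \ref{charfil}(4)--(5).

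First I would recall the short exact sequences that compute $\Tor_1$ in terms of $\Ext^1$. For each $s$, choose a projective resolution of $kG_s$ as a $k\C$-module; since $kG_s$ is supported in a single degree, one has a short exact sequence $0 \to \Omega_s \to M(s)^{?} \to kG_s \to 0$ (or the sequence $0 \to J_0 \to M(0) \to kG_0 \to 0$ in the base case, shifted up by the induction functor $L$ as in the proof of Theorem \ref{equivdepth}). The key move is then to use Theorem \ref{orthorel}(3): $V$ is injective if and only if $\Ext^1_{k\C}(W, V) = 0$ for every basic filtered $W$ and every finitely generated torsion $W$. Applying this with $W$ ranging over the torsion modules $kG_s$ (and over the modules $\Omega_s$, $J_0$, etc., which are also torsion since they are supported in finitely many degrees), I would run the same dimension-shifting argument as in the proof of Theorem \ref{equivdepth} — but now in the variable $W$ rather than in $V$ — to conclude $\Ext^i_{k\C}(kG_s, V) = 0$ for all $i \geqslant 0$ and all $s$.

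Once we have $\Ext^i_{k\C}(kG_s, V) = 0$ for all $i, s \geqslant 0$, Theorem \ref{charfil}(4) immediately gives that $V$ is $\sharp$-filtered, and we are done. Alternatively — and this is probably the cleanest route — I would argue that the vanishing $\Ext^1_{k\C}(T, V) = 0$ for all finitely generated torsion $T$, combined with the decomposition in the second statement of the theorem preceding Lemma \ref{injsharp} (the one characterizing modules with $\Ext^i_{k\C}(T, V) = 0$ for $i \geqslant 1$ and torsion $T$), forces $V$ to split as an injective torsion module plus a $\sharp$-filtered module; since $V$ is torsion free the torsion summand vanishes and $V$ itself is $\sharp$-filtered. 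This makes the proof essentially a one-line invocation: $V$ injective $\Rightarrow$ $\Ext^1_{k\C}(T,V) = 0$ for all torsion $T$ $\Rightarrow$ $V = V_T \oplus (\text{$\sharp$-filtered})$ $\Rightarrow$ $V$ is $\sharp$-filtered.

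The main obstacle is a bookkeeping one rather than a conceptual one: making sure the auxiliary modules appearing in the syzygies of $kG_s$ (the $\Omega_s$) are genuinely torsion and finitely generated so that the orthogonality theorems apply to them, and keeping the induction on homological degree $i$ clean. If one takes the second route above, essentially all the work has already been done in the theorem preceding Lemma \ref{injsharp}, so the only thing to verify carefully is that an injective $V$ indeed satisfies the hypothesis $\Ext^i_{k\C}(T,V)=0$ for $i \geqslant 1$ and all finitely generated torsion $T$ — which is immediate, since injectivity kills all higher $\Ext$ into $V$.
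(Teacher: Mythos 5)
Your second route is exactly the paper's proof: the paper invokes Theorem \ref{homorth}(4), so injectivity of $V$ forces $V \cong V_T \oplus F$ with $V_T$ an injective torsion module and $F$ $\sharp$-filtered, and torsion-freeness kills the summand $V_T$. Your alternative first route via Theorem \ref{charfil}(4) also works and is in fact simpler than you describe---$\Ext^0_{k\C}(kG_s,V)$ vanishes directly by torsion-freeness and $\Ext^{\geqslant 1}_{k\C}(kG_s,V)$ by injectivity, with no dimension shifting required---but the decomposition argument is what the paper actually uses.
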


\begin{proof}
This follows at once from Theorem \ref{homorth}.\\
\end{proof}

\begin{lemma}\label{shiftinj}
The shift functor $\Sigma$ preserves injective objects. The derivative functor $D$ preserves torsion free injective objects. That is to say, if $V$ is a torsion free injective object, then so is $DV$.\\
\end{lemma}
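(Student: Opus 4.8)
The plan is to handle the two claims separately, since they concern different functors and different hypotheses. For the first claim, that $\Sigma$ preserves injective objects: let $V$ be a finitely generated injective $\C$-module. I would use the characterization of injectivity furnished by Theorem \ref{orthorel}(3), which says $V$ is injective if and only if $\Ext^1_{k\C}(W,V) = 0$ for every $W$ that is basic filtered or a finitely generated torsion module. So I want to check $\Ext^1_{k\C}(W,\Sigma V) = 0$ for all such $W$. The natural tool is the Eckmann--Shapiro lemma (Proposition \ref{EckSha}), which gives $\Ext^1_{k\C}(\Sigma V, \bullet)$ adjunctions; but here I need $\Sigma$ in the \emph{second} variable, so the relevant identity is $\Ext^i_{k\C}(L(W),V) \cong \Ext^i_{k\C}(W,\Sigma V)$. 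Thus $\Ext^1_{k\C}(W,\Sigma V) \cong \Ext^1_{k\C}(L(W),V)$, and since $V$ is injective this vanishes. It remains only to note that $L(W)$ is again of the required type: if $W = M(U)$ is basic filtered then $L(M(U)) \cong M(\Ind_{G_n}^{G_{n+1}}U)$ by Theorem \ref{bigrepthm}, hence basic filtered; and if $W$ is a finitely generated torsion module, then $L(W)$ is finitely generated (being a quotient of $L$ applied to a free module) and torsion (one checks $L(W)_{n+1} = \Ind_{G_n}^{G_{n+1}}W_n$, so torsion elements of $W$ produce torsion elements of $L(W)$, using that $L$ is exact). Therefore $\Sigma V$ is injective.

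For the second claim, that $D$ preserves torsion free injective objects: let $V$ be torsion free and injective. By Lemma \ref{injsharp}, $V$ is $\sharp$-filtered. Now recall $DV = \coker(\tau_V : V \to \Sigma V)$. By Proposition \ref{genetic} and Remark \ref{dervprop}, when $V$ is $\sharp$-filtered the map $\tau_V$ is a split injection and $DV$ is again $\sharp$-filtered (one filters $V$ by basic filtered pieces, applies $\Sigma M(W) \cong M(\Res W) \oplus M(W)$ termwise, and the complement is exactly the $\Res$-summand, which is $\sharp$-filtered; being a direct summand, $DV$ is $\sharp$-filtered). Moreover $DV$ is torsion free, since $\sharp$-filtered modules are torsion free (they are built from basic filtered modules $M(W)$, which are torsion free). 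So it remains to show $DV$ is injective. The cleanest route: since $\tau_V$ splits, $\Sigma V \cong V \oplus DV$. By the first claim, $\Sigma V$ is injective (as $V$ is injective). A direct summand of an injective object in an abelian category is injective, hence $DV$ is injective.

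The main obstacle I anticipate is the bookkeeping in the first claim — specifically, verifying that $L$ sends finitely generated torsion modules to finitely generated torsion modules, and ensuring the Eckmann--Shapiro identities are invoked with the variables in the correct slots (the paper states them for finitely generated modules, so one should confirm $L(W)$ is finitely generated before applying them). The rest is essentially formal once one has the adjunction and the splitting of $\tau_V$ on $\sharp$-filtered modules in hand; in particular the second claim reduces almost entirely to the first via the decomposition $\Sigma V \cong V \oplus DV$.
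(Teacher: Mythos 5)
Your proof is correct, but it takes a more circuitous route than the paper's for both claims. For the first claim, the paper simply invokes the standard categorical fact that a right adjoint to an exact functor preserves injectives, which applies here since $\Sigma$ is right adjoint to the exact functor $L$ (Proposition \ref{leftadjoint}). Your route through Theorem \ref{orthorel}(3) and Eckmann--Shapiro reaches the same conclusion; note that you need not verify $L(W)$ stays in the class of basic-filtered-or-torsion modules at all, since $V$ being injective already forces $\Ext^1_{k\C}(L(W),V)=0$ for \emph{every} finitely generated $W$. For the second claim, both arguments hinge on the splitting of $0 \to V \to \Sigma V \to DV \to 0$, but the justifications differ. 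The paper's (implicit) reason is elementary: $\tau_V$ is a monomorphism because $V$ is torsion free, and any monomorphism out of an injective object splits. You instead pass through Lemma \ref{injsharp} to conclude $V$ is $\sharp$-filtered and then argue $\tau_V$ splits for $\sharp$-filtered modules. That claim is true, but Remark \ref{dervprop} as cited only establishes it for \emph{basic} filtered modules $M(W)$; extending it to general $\sharp$-filtered modules requires an induction over the filtration that is not purely formal. The paper's ``injective + mono implies split'' argument sidesteps this entirely and is the cleaner path. Once the splitting $\Sigma V \cong V \oplus DV$ is in hand, both proofs conclude identically: $\Sigma V$ is injective and torsion free, hence so is its summand $DV$.
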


\begin{proof}
We have already seen that $\Sigma$ is right adjoint to the exact induction functor. This implies that it must preserve injective objects. On the other hand, if $V$ is a torsion free injective object, then there is a split exact sequence
\[
0 \rightarrow V \rightarrow \Sigma V \rightarrow DV \rightarrow 0.
\]
The fact that $\Sigma V$ is injective implies that $DV$ must be as well. Moreover, $DV$ must be torsion free, as the same is true of $\Sigma V$.\\
\end{proof}

\begin{lemma} \label{injpoint}
If $V$ is an injective $\C$-module, then $V_n$ is an injective $kG_n$-module for all $n \geq 0$.\\
\end{lemma}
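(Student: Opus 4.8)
The plan is to realize the evaluation functor $V \mapsto V_n$ as the right adjoint of an exact functor, so that it automatically carries injectives to injectives.

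First I would recall from Proposition \ref{yoneda} that for any $kG_n$-module $W$ there is a natural isomorphism
\[
\Hom_{\C\Mod}(M(W), V) \cong \Hom_{kG_n\Mod}(W, V_n).
\]
This says exactly that the basic filtered module functor $M : kG_n\Mod \to \C\Mod$ is left adjoint to the evaluation functor $e_n : \C\Mod \to kG_n\Mod$ sending $V$ to $V_n = e_n \cdot V$. Next I would note that $M$ is exact: by definition $M(W)_m = k[\Hom_{\C}([n],[m])] \otimes_{kG_n} W$, and $\Hom_{\C}([n],[m])$ is a free right $G_n$-set (precomposition by an automorphism of $[n]$ acts freely), so $k[\Hom_{\C}([n],[m])]$ is a free, hence flat, right $kG_n$-module; thus $M$ is exact. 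This is the same fact already used above in the phrase ``applying the exact functor $M(\bullet)$''.

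To finish, I would use that a right adjoint of an exact functor preserves injective objects. Concretely: given an inclusion $W' \hookrightarrow W$ of $kG_n$-modules, exactness of $M$ yields an inclusion $M(W') \hookrightarrow M(W)$ of $\C$-modules; injectivity of $V$ then gives a surjection $\Hom_{\C\Mod}(M(W), V) \twoheadrightarrow \Hom_{\C\Mod}(M(W'), V)$; and the adjunction identifies this with the surjectivity of $\Hom_{kG_n\Mod}(W, V_n) \to \Hom_{kG_n\Mod}(W', V_n)$. Hence $\Hom_{kG_n\Mod}(-, V_n)$ is exact, i.e.\ $V_n$ is an injective $kG_n$-module, for every $n \geqslant 0$.

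There is essentially no real obstacle here; the only point requiring (very mild) care is the exactness of $M$, which rests on the freeness of $\Hom_{\C}([n],[m])$ as a right $G_n$-set and which the paper has already invoked. If one preferred to avoid even mentioning the adjoint-functor formalism, one could instead argue directly with the displayed isomorphism as in the previous paragraph.
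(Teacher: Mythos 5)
Your proof is correct and takes essentially the same route as the paper: both use the adjunction of Proposition \ref{yoneda} to transport the extension problem for $V_n$ across $M$ to one for $V$, and then invoke injectivity of $V$. The only difference is that you explicitly verify that $M$ preserves monomorphisms (via freeness of the right $G_n$-action on $\Hom_{\C}([n],[m])$), a step the paper's proof uses implicitly when it lifts $\overline{\phi}:M(U)\rightarrow V$ along $M(U)\hookrightarrow M(W)$; making this explicit is a sound and mild improvement, not a different argument.
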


\begin{proof}
Let $U \subseteq W$ be $kG_n$-modules, and assume we have a map $\phi:U \rightarrow V_n$. Then Proposition \ref{yoneda} implies that there is a map $\overline{\phi}:M(U) \rightarrow V$ such that $\overline{\phi}_n = \phi$. Because $V$ is injective, this will lift to a map $\psi:M(W) \rightarrow V$. Looking at this map in degree $n$, we obtain the desired lift of $\phi$.\\
\end{proof}

This is all we need to prove the main theorem of this section.\\

\begin{theorem} \label{noinj}
Let $k$ be a Noetherian ring, and assume that either $k$ is a field of characteristic $p > 0$, or that there are no non-trivial finitely generated injective $k$-modules. Then the category $\C\module$ does not admit any torsion free injective modules over $k$. In particular, under either of the above hypotheses, the category $\C\module$ does not have sufficiently many injective objects.\\
\end{theorem}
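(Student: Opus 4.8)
The plan is to argue by contradiction: suppose $V$ is a finitely generated, torsion free injective $\C$-module, and derive a contradiction from the hypotheses on $k$. First I would invoke Lemma \ref{injsharp} to conclude that $V$ is $\sharp$-filtered, and in particular $V \neq 0$ forces $V$ to have a nonzero basic filtered quotient $M(W^{(t)})$ at the top of its defining filtration, for some finitely generated $kG_t$-module $W^{(t)}$. The key idea is to use the derivative functor $D$ to peel off the filtration layers: by Lemma \ref{shiftinj}, each iterate $D^a V$ is again torsion free and injective, hence again $\sharp$-filtered. Applying $D$ repeatedly lowers the generating degree (Remark \ref{dervprop}), so after finitely many steps we reach a torsion free injective module concentrated in degree $0$ — that is, a module of the form $M(W)$ with $W$ a finitely generated $kG_0 = kG$-module, which as a $\C$-module is just the ``constant-type'' module built from $W$. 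Actually it is cleaner to apply $\Sigma$ (not $D$) to reduce to analyzing $V_n$ directly: by Lemma \ref{injpoint}, $V_n$ is an injective $kG_n$-module for every $n$.

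Next I would extract the contradiction from the pointwise injectivity together with the $\sharp$-filtered structure. Since $V$ is $\sharp$-filtered and nonzero, pick the smallest $n$ with $V_n \neq 0$; then $V_n = M(W^{(n)})_n = W^{(n)} \neq 0$ is a nonzero injective $kG_n$-module by Lemma \ref{injpoint}. Now the two hypotheses split the argument. If $k$ is a field of characteristic $p > 0$: choose $n$ large enough that $p \mid |G_n| = |G|^n \cdot n!$ (e.g. $n \geq p$), so that $kG_n$ is not semisimple; but more to the point, I would show that the transition maps of $M(W^{(n)})$ force a nonzero injective $kG_n$-module to appear as a summand inside $M(W^{(n)})_{n+1} \cong kG_{n+1}\otimes_{kG_n} W^{(n)}$ in a way incompatible with injectivity over $kG_{n+1}$ unless $W^{(n)}$ is also projective — and iterating, $W^{(n)}$ would have to be both injective and projective over the group algebras of arbitrarily large symmetric-type groups $G_m$, which fails in positive characteristic because $\Ext^1_{kG_m}(-,-)$ is nonzero on $kG_m$ for $m$ divisible by $p$. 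If instead $k$ has no nonzero finitely generated injective modules: then $V_n$, being a nonzero finitely generated $kG_n$-module which is injective over $kG_n$, is in particular injective over $k$ (restrict along $k \hookrightarrow kG_n$, using that $kG_n$ is free as a $k$-module, so injectivity descends), contradicting the hypothesis directly.

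The cleanest route, which I would actually write up, is: assume $V$ is finitely generated torsion free injective; by Lemma \ref{injsharp} it is $\sharp$-filtered, so it has a nonzero top layer $M(W)$ with $W$ a finitely generated $kG_s$-module, and by Lemma \ref{injpoint} the $kG_s$-module $W \cong V_s/V_{<s}$ — wait, one must be careful since $V_{<s}$ need not vanish; so instead take $s$ \emph{minimal} with $V_s\neq 0$, giving $V_s = W^{(s)}$ genuinely injective over $kG_s$. In the characteristic-$p$ case one then uses that $\Sigma^m V$ is still finitely generated, torsion free, injective (Lemma \ref{shiftinj}), hence $\sharp$-filtered, and its degree-$s$ part is $\Res$-ed down; pushing $m$ up and applying Lemma \ref{injpoint} at a level where $p \mid |G_m|$ produces an injective $kG_m$-module that must split off $kG_m$ itself (since $kG_m$ has simple socle issues / is self-injective), forcing a projective summand, and then the standard fact that a finitely generated module cannot be projective over $kG_m$ for all large $m$ while remaining the fixed module $W$ gives the contradiction. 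In the no-injective-$k$-modules case the descent of injectivity along the flat (even free) extension $k \to kG_s$ gives the contradiction immediately.

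The main obstacle I anticipate is the positive-characteristic case: making precise \emph{why} a nonzero $\sharp$-filtered torsion free injective module is impossible requires pinning down how injectivity over $kG_n$ for all $n$ interacts with the basic-filtered structure, and the honest mechanism is that injectivity forces projectivity of the generating $kG_n$-modules (via self-injectivity of group algebras), whereupon Theorem \ref{homacyclic}/\ref{orthorel}-style considerations combined with the fact that projective $kG_n$-modules of bounded rank cannot persist under $\Res$ as $n\to\infty$ in characteristic $p$ deliver the contradiction; the last ``persistence'' step is the delicate point and is where I would spend the most care. The final sentence of the theorem — failure of enough injectives — then follows formally: if $\C\module$ had enough injectives, the torsion free module $M(0)$ would embed in a finitely generated injective, whose torsion free part (a summand, by the decomposition in the earlier theorem) would be a nonzero finitely generated torsion free injective, contradicting what we just proved.
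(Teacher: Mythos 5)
Your opening paragraph has the right plan, and it is essentially the paper's proof: apply $D$ repeatedly, using Lemma \ref{shiftinj} to keep the result torsion free injective and Remark \ref{dervprop} to drop the generating degree, until you reach a torsion free injective $M(W)$ generated in degree $0$, and then invoke Lemma \ref{injpoint}. You then second-guess yourself and abandon this route, and neither of your replacements closes the argument.

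Two concrete problems. First, $kG_0$ is $k$, not $kG$: the endomorphism monoid of $[0]$ in $\FI_G$ is $G \wr \mathfrak{S}_0 = 1$. This is not a notational slip — it is the whole point of reducing to degree $0$. Once there, $W$ is a $k$-module, so in the field case $W \cong k^m$, and $M(0)$ is a direct summand of $M(W)$, hence injective. By Lemma \ref{injpoint}, $M(0)_n$ must be injective over $kG_n$; but $M(0)_n$ is the trivial $kG_n$-module, and since group algebras over fields are self-injective, injectivity would force the trivial module to be projective, which fails whenever $p \mid |G_n| = |G|^n\,n!$, i.e.\ for every $n \geqslant p$. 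That observation finishes the characteristic-$p$ case outright — no ``persistence under $\Res$'' argument is needed. In the second case, at degree $0$ the module $W$ is literally a nonzero finitely generated injective $k$-module, contradicting the hypothesis directly; your flat-descent remark is valid but becomes unnecessary.

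Second, your ``cleanest route'' (taking $s$ minimal with $V_s \neq 0$, or applying $\Sigma$ rather than $D$) leaves you with an injective $kG_s$-module for an arbitrary $s$, and the chain of ideas you sketch for characteristic $p$ — self-injectivity of $kG_m$ forcing projectivity, then a claim that ``a finitely generated module cannot be projective over $kG_m$ for all large $m$ while remaining the fixed module $W$'' — is not a proof. The modules $V_m$ vary with $m$, so there is no single fixed $W$ to which the non-persistence claim applies, and you yourself flag this as ``the delicate point.'' Also note that $\Sigma$ does not lower generating degree, so it cannot replace $D$ in the reduction step. Your closing observation on the failure of enough injectives is correct and supplies the detail the paper leaves implicit.
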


\begin{proof}
Let $V$ be a finitely generated, torsion free injective $\C$-module. Then $V$ is $\sharp$-filtered by Lemma \ref{injsharp}. Moreover, \cite[Proposition 2.4]{LY} tells us that $\gd(DV) = \gd(V) - 1$, so long as $\gd(V) \neq 0$. In particular, Lemma \ref{shiftinj} implies that if we apply the derivative functor enough times to $V$, we will be left with a torsion free, injective module, which is generated in degree $0$. All such modules take the form $M(W)$, where $W$ is a finitely generated module over $kG_0 \cong k$. Note that Lemma \ref{injpoint} implies that $W$ is injective as a $k$-module.\\

Now assume that $k$ is a field of characteristic $p > 0$. Then $M(0)$ will be a summand of $M(W)$, and therefore will be injective as well. This is a contradiction of Lemma \ref{injpoint}, as $M(0)_n$ is the trivial module for all $n$, and the trivial module is not injective for $n \geqslant p$.\\

If, on the other hand, $k$ satisfies the second of our two conditions, then we reach a contradiction with the fact that $W$ must be a finitely generated injective module.\\
\end{proof}

\begin{remark}
The above theorem was proven by Changchang Xi and the first author during the latter's visit to Capital Normal University in December of 2015. It was independently proven by the second author a short time later. The first author would like to thank Prof. Xi for hosting him during this visit, and both authors thank Prof. Xi for kindly allowing us to include this result in the paper.\\
\end{remark}

\begin{remark}
In \cite[Theorem 2.5.1]{SS3}, Sam and Snowden prove that the category $\C\module^{tor}$ and the Serre quotient category $\C\module/\C\module^{tor}$ are equivalent whenever $k$ is a field of characteristic 0. In \cite{GL}, it is shown that $\C\module^{tor}$ has sufficiently many injective objects whenever $k$ is a field. The above theorem therefore seems to indicate that the equivalence of Sam and Snowden will fail whenever $k$ has positive characteristic.\\
\end{remark}

\section{Local Cohomology}

In this final portion of the paper, we aim to develop a theory of local cohomology for finitely generated $\C$-modules. This problem was first considered by Sam and Snowden in \cite{SS3}. Their work only applies to the case where $k$ is a field of characteristic 0. Much of the difficulty of treating the theory over a general Noetherian ring is that it is unclear whether the category $\C\module$ has sufficiently many injective objects. Despite this fact, we will discover that $\sharp$-filtered objects can play the role of injective modules in many computations. The reader is encouraged to compare the theorems in this part of the paper to theorems in the local cohomology of modules over a polynomial ring.\\

\subsection{The Torsion Functor}

\begin{definition}
We write $\C\Mod^{tor}$ for the category of torsion $\C$-modules. We will also used $\C\module^{tor}$ to denote the category of finitely generated torsion modules. By Lemma \ref{fintor}, $\C\module^{tor}$ is equivalent to the category of finitely generated $\C$-modules with finite support.\\

The \textbf{torsion functor} $\widetilde{H_\mi^0}:\C\Mod \rightarrow \C\Mod^{tor}$ is defined by setting $H^0_\mi(V)$ to be the maximal torsion submodule of $V$. We will use $H^0_\mi: \C\module \rightarrow \C\module^{tor}$ to denote the restriction of $\widetilde{H_\mi^0}$ to $\C\mod$.\\
\end{definition}

Our goal for the remainder of this paper is to study the torsion functor and its derived functors. Unfortunately, it is not yet clear that this functor actually has derived functors. When $k$ is a field of characteristic 0 Sam and Snowden \cite[Theorem 4.3.1]{SS3}, as well as Gan and the first author \cite[Theorem 1.7]{GL}, have shown that the category $\C\module$ has sufficiently many injective objects. When $k$ is not a field of characteristic 0 this is no longer the case. Luckily, we can easily show that the larger category $\C\Mod$ does have sufficiently many injective objects.\\

\begin{proposition}
The category $\C\Mod$ has sufficiently many injective objects.\\
\end{proposition}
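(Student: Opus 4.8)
The plan is to show that $\C\Mod$ has enough injectives by exhibiting, for each $n \geqslant 0$, an exact functor from $\C\Mod$ to a category that is well-known to have enough injectives, whose right adjoint then produces injective $\C$-modules. The natural candidate is the evaluation functor $V \mapsto V_n$ from $\C\Mod$ to $kG_n\Mod$, which is visibly exact since kernels and cokernels in $\C\Mod$ are computed pointwise. By the tensor-hom / Kan extension formalism, this evaluation functor has a right adjoint, namely the ``co-induction from degree $n$'' functor $C_n$ defined on a $kG_n$-module $U$ by $C_n(U)_m = \Hom_{kG_n}(k[\Hom_{\C}([m],[n])], U)$, with the $\C$-action by precomposition. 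Because $C_n$ is right adjoint to an exact functor, it sends injective $kG_n$-modules to injective $\C$-modules.

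First I would record the adjunction $\Hom_{\C\Mod}(V, C_n(U)) \cong \Hom_{kG_n\Mod}(V_n, U)$ explicitly — this is a routine check that precomposition on $\Hom_\C$-sets is the correct structure and that the unit/counit maps behave. Next I would observe that $kG_n\Mod$ has enough injectives: since $k$ is Noetherian (in fact any commutative ring works here), the category of $kG_n$-modules has enough injectives because $kG_n$ is a ring and module categories over a ring always have enough injectives (Baer's criterion, or the fact that $\Mod$-$R$ is a Grothendieck category). So given a $kG_n$-module $U$, embed $U \hookrightarrow I$ with $I$ injective.

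Then, given an arbitrary $\C$-module $V$, for each $n$ I embed $V_n \hookrightarrow I_n$ into an injective $kG_n$-module and take the adjoint map $V \to C_n(I_n)$, which corresponds to the identity-then-inclusion $V_n \to V_n \to I_n$. Taking the product over all $n$ gives a map $V \to \prod_n C_n(I_n)$; each factor is injective (right adjoint of an exact functor applied to an injective), and a product of injectives is injective, so the target is an injective $\C$-module. It remains to see this map is a monomorphism: in degree $m$ the component landing in $C_m(I_m)$ already recovers the inclusion $V_m \hookrightarrow I_m$ after applying the counit, so the composite $V_m \to \left(\prod_n C_n(I_n)\right)_m \to I_m$ is injective, hence $V \to \prod_n C_n(I_n)$ is injective pointwise, hence a monomorphism in $\C\Mod$.

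I do not expect a serious obstacle here; the only thing to be careful about is the precise definition of $C_n$ and the direction of the $\C$-action (it must be via precomposition with morphisms $[m']\to[m]$ acting on $\Hom_\C([m],[n])$ so that $C_n(U)$ is genuinely a covariant $\C$-module), and checking that products exist and are computed pointwise in $\C\Mod$, which is immediate since $\C\Mod$ is a functor category into $k\Mod$ and hence a Grothendieck abelian category. The mild subtlety worth a sentence is that we are working in $\C\Mod$, not $\C\module$: the product $\prod_n C_n(I_n)$ need not be finitely generated even if $V$ is, which is exactly why the proposition is stated for $\C\Mod$ and why the torsion functor's derived functors must be computed in the big category and then checked a posteriori (via Theorem \ref{coequiv}) to land in $\C\module$.
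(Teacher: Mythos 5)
Your proof is correct, and it takes a genuinely different and more explicit route than the paper's. The paper disposes of this proposition in two sentences by invoking Grothendieck's theorem: $\C\Mod$ is a functor category from a small category into the Grothendieck category $k\Mod$, hence is itself a Grothendieck category, hence has enough injectives. You instead build injective embeddings by hand, using the classical ``cofree module'' construction: the evaluation functor $V \mapsto V_n$ is exact, its right adjoint $C_n$ (co-induction from degree $n$, the right Kan extension along the inclusion of $[n]$) therefore carries injective $kG_n$-modules to injective $\C$-modules, and a pointwise injection $V_n \hookrightarrow I_n$ into an injective $kG_n$-module for each $n$ assembles, via adjunction and the product over $n$, into a monomorphism $V \hookrightarrow \prod_n C_n(I_n)$ with injective target. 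Both arguments are sound; the paper's is shorter because it outsources everything to Grothendieck's theorem, while yours has the virtue of exhibiting concrete injective objects (the cofree modules $C_n(I)$), which is useful if one later wants to actually compute with injective resolutions rather than merely know they exist. Your closing remark about why the proposition must be stated for $\C\Mod$ rather than $\C\module$ is accurate and matches the paper's own logic: one takes derived functors in the big category and only afterward (Theorem \ref{coequiv}) verifies that the local cohomology of a finitely generated module is again finitely generated.
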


\begin{proof}
It suffices to show that $\C\Mod$ satisfies Grothendieck's AB Criterion \cite[Theorem 1.10.1]{G}. This follows from the fact that it is a functor category from a small category ($\C$) into a category which satisfies this criterion ($k\Mod$).\\
\end{proof}

Because of this proposition, we know that we may at least make sense of the derived functors of $\widetilde{H^0_\mi}$.\\

\begin{definition}
We write $\widetilde{H^i_\mi}$ to denote the $i$-th right derived functor of $\widetilde{H^0_\mi}$. We refer to these as the \textbf{local cohomology functors}.\\
\end{definition}

Our goal for much of what follows will be to show that if $V$ is a finitely generated $\C$-module, then the modules $\widetilde{H^i_\mi}(V)$ are also finitely generated. Once this is accomplished, we will spend the remainder of the paper showing how this result applies to invariants such as the Nagpal number and regularity.\\

\subsection{Some Acyclics}

In this section we will classify two important families of finitely generated modules which are acyclic with respect to local cohomology: $\sharp$-filtered objects and torsion modules. To accomplish this, we will need to view the torsion functor from a slightly different perspective.\\

\begin{definition}
For each $n \geqslant 1$, and each $r \geqslant 0$, we define the $\C$-module $M(r)/\mi^n$ as the quotient of $M(r)$ by the submodule generated by $M(r)_{r+n}$. We also use $\fiHom(k\C/\mi^n,\bullet):\C\Mod \rightarrow \C\Mod$ to denote the functor
\[
\fiHom(k\C/\mi^n,V) := \bigoplus_r \Hom_{k\C}(M(r)/\mi^n, V).
\]
We set $\fiExt^i(k\C/\mi^n,\bullet):\C\Mod \rightarrow \C\Mod$ to be the $i$-th derived functor of $\fiHom(k\C/\mi^n,\bullet)$. Explicitly,
\[
\fiExt^i(k\C/\mi^n,V) := \bigoplus_r \Ext^i_{k\C}(M(r)/\mi^n, V)
\]
\end{definition}

\begin{remark}
We note that the $\C$-modules $\Hom_{k\C}(k\C/\mi^n, V)$ and $\fiHom(k\C/\mi^n,V)$ are not necessarily isomorphic. Indeed, morphisms which appear as elements in the latter module are necessarily supported in finitely many degrees, while this is not the case for the prior module.\\

Also note that if we restrict ourselves to working with finitely generated modules, then $\fiHom(k\C/\mi^n,\bullet)$ and $\Hom_{k\C}(k\C/\mi^n,\bullet)$ are isomorphic. This follows from Lemma \ref{fancyhom}, as well as the fact that the torsion parts of finitely generated modules are finitely supported.\\
\end{remark}

\begin{lemma}\label{fancyhom}
Let $r \geqslant 0$ and $n \geqslant 1$ be integers, and let $V$ be a $\C$-module. Then $\fiHom(k\C/\mi^n,V)$ is naturally isomorphic to a submodule of $V$. Namely,
\begin{eqnarray*}
\fiHom(k\C/\mi^n,V)_r &=\text{ those elements of $V_r$ which are in the kernel of every transition map}\\
												   &\text{ $(f,g):[r] \rightarrow [s]$ with $s-r \geqslant n$.}
\end{eqnarray*}
\end{lemma}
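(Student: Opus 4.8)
The statement is really just an unwinding of definitions, so the plan is to exhibit the claimed isomorphism explicitly and check it is natural. First I would recall that, by definition, $\fiHom(k\C/\mi^n,V)_r = \Hom_{k\C}(M(r)/\mi^n, V)$, where $M(r)/\mi^n$ is the quotient of the free module $M(r)$ by the submodule generated by $M(r)_{r+n}$. By Proposition \ref{yoneda} (Yoneda), $\Hom_{k\C}(M(r),V) \cong \Hom_{kG_r}(kG_r, V_r) \cong V_r$ via $\phi \mapsto \phi_r(e_r)$, where $e_r$ is the identity morphism on $[r]$. So the key point is to determine which elements $v \in V_r$ correspond to maps $M(r) \to V$ that factor through the quotient $M(r)/\mi^n$.

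\textbf{Main step.} Applying $\Hom_{k\C}(-,V)$ to the short exact sequence $0 \to \langle M(r)_{r+n}\rangle \to M(r) \to M(r)/\mi^n \to 0$ gives a left-exact sequence, so $\Hom_{k\C}(M(r)/\mi^n,V)$ is identified with the submodule of $\Hom_{k\C}(M(r),V) \cong V_r$ consisting of those $\phi$ (equivalently, those $v = \phi_r(e_r)$) that vanish on the submodule $\langle M(r)_{r+n}\rangle$ generated in degree $r+n$. Now $M(r)_{r+n} = k[\Hom_\C([r],[r+n])]$, so a basis is given by the morphisms $(f,g): [r] \to [r+n]$, and under the correspondence $\phi \leftrightarrow v$ one has $\phi_{r+n}(f,g) = (f,g)_\as v$. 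Hence $\phi$ kills the degree-$(r+n)$ generators exactly when $(f,g)_\as v = 0$ for every $(f,g):[r]\to[r+n]$. Since this submodule is generated in degree $r+n$, vanishing on its generators forces vanishing on all of it. Finally, I would note that a transition map $(f,g):[r]\to[s]$ with $s - r \geqslant n$ factors through some $(f',g'):[r]\to[r+n]$ (split off a tail), so $(f,g)_\as v = 0$ for all such maps is equivalent to $(f,g)_\as v = 0$ for all maps with target exactly $[r+n]$; this gives the stated description. Naturality in $V$ is immediate since every identification used (Yoneda, the long exact sequence of $\Ext$, the basis of $M(r)_{r+n}$) is functorial.

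\textbf{Expected obstacle.} There is no serious obstacle; the only mildly delicate point is the equivalence between ``killed by every transition map into degree $r+n$'' and ``killed by every transition map into degree $\geqslant r+n$''. This follows from the factorization of morphisms in $\C$ together with the transitivity of the $G_s$-action on $\Hom_\C([r],[s])$ already exploited in the proof of the torsion-free lemma earlier in the paper: any $(f,g):[r]\to[s]$ with $s-r\geqslant n$ can be written as a composite $[r]\to[r+n]\to[s]$, so if $v$ is killed by all maps with target $[r+n]$ it is killed by $(f,g)$ as well; the converse is trivial. I would phrase the write-up so that the isomorphism $\fiHom(k\C/\mi^n,V)_r \hookrightarrow V_r$ is exhibited first as a $k$-module map and then checked to be compatible with the induced maps of $\C$-modules, which is a routine diagram chase using the definition of the induced maps on $\fiHom(k\C/\mi^n,V)$.
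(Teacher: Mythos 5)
Your proposal is correct and follows essentially the same route as the paper: identify $\fiHom(k\C/\mi^n,V)_r$ via Yoneda (Proposition \ref{yoneda}) as those $v \in V_r$ killed by all transition maps into degree $r+n$, and observe that this is the same as being killed by all transition maps into degrees $\geqslant r+n$ by factorization. The paper's proof is terser (it does not spell out the short exact sequence or the factorization step), but the content is the same.
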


\begin{proof}
By definition, $\fiHom(k\C/\mi^n,V)_r = \Hom_{k\C}(M(r)/\mi^n,V)$. Proposition \ref{yoneda} implies that any morphism $M(r) \rightarrow V$ is determined by the image of the identity in degree $r$. It follows that a morphism $M(r)/\mi^n \rightarrow V$ is determined by a choice of element in $V_r$, with the added restriction that it is in the kernel of all transition maps into $V_{r+n}$. This completes the proof.\\
\end{proof}

\begin{proposition}\label{homversion}
There is a natural isomorphism of functors,
\[
\widetilde{H^0_\mi}(\bullet) \cong \lim_{\rightarrow} \fiHom(k\C/\mi^n,\bullet).
\]
More generally, there are natural isomorphisms
\[
\widetilde{H^i_\mi}(\bullet) \cong \lim_{\rightarrow} \fiExt^i(k\C/\mi^n,\bullet)
\]
for all $i \geqslant 0$.\\
\end{proposition}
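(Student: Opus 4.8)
The plan is to verify the degree-zero statement directly from the explicit descriptions we already have, and then bootstrap to higher $i$ by a standard delta-functor (effaceability) argument. First I would prove the isomorphism $\widetilde{H^0_\mi}(\bullet) \cong \lim_{\rightarrow}\fiHom(k\C/\mi^n,\bullet)$. By Lemma \ref{fancyhom}, $\fiHom(k\C/\mi^n,V)_r$ is precisely the submodule of $V_r$ killed by every transition map $(f,g):[r]\to[s]$ with $s-r\geqslant n$; and the transition maps $M(r)/\mi^{n}\twoheadrightarrow M(r)/\mi^{n+1}$ induce inclusions $\fiHom(k\C/\mi^{n},V)\hookrightarrow\fiHom(k\C/\mi^{n+1},V)$ compatible with the $\C$-action, so the direct limit is just the union of these nested submodules inside $V$. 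An element $v\in V_r$ lies in this union exactly when it is killed by \emph{all} sufficiently deep transition maps, which (by the kernel-of-one-equals-kernel-of-all observation used in the proof that torsion elements detect torsion-freeness) is the same as $v$ being a torsion element. Hence the union is the maximal torsion submodule $\widetilde{H^0_\mi}(V)$, and naturality in $V$ is immediate since every construction is functorial.

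For the general case I would argue that both sides, as functors of $V$, form cohomological $\delta$-functors that agree in degree $0$ and are effaceable (equivalently, coeffaceable on injectives), hence agree. Concretely: $\{\widetilde{H^i_\mi}\}$ is by definition the right-derived $\delta$-functor of $\widetilde{H^0_\mi}$, so it is universal. For the other side, note that $\fiExt^i(k\C/\mi^n,\bullet)$ is the right-derived functor of $\fiHom(k\C/\mi^n,\bullet)$, and since $\C\Mod$ has enough injectives (the proposition just proved via Grothendieck's AB5 criterion), a short exact sequence $0\to V'\to V\to V''\to 0$ embeds into a short exact sequence of injective resolutions; applying each $\fiHom(k\C/\mi^n,\bullet)$ gives long exact sequences, and these are compatible with the transition maps $M(r)/\mi^{n}\to M(r)/\mi^{n+1}$, so passing to the filtered colimit (which is exact on $k$-modules, hence on $\C\Mod$ pointwise) yields a long exact sequence for $\lim_{\rightarrow}\fiExt^i(k\C/\mi^n,\bullet)$. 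Thus $\{\lim_{\rightarrow}\fiExt^i(k\C/\mi^n,\bullet)\}_{i\geqslant 0}$ is a $\delta$-functor extending $\widetilde{H^0_\mi}$.

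It then remains to check that this $\delta$-functor is effaceable in positive degrees, so that the universal property of derived functors supplies the comparison isomorphism. For an injective object $I$ of $\C\Mod$, injectivity gives $\Ext^i_{k\C}(M(r)/\mi^n,I)=0$ for $i\geqslant 1$ and every $r,n$, so each $\fiExt^i(k\C/\mi^n,I)$ vanishes and hence so does the colimit. More generally, embedding an arbitrary $V$ into an injective $I$ with cokernel $Q$ and chasing the long exact sequence shows $\lim_{\rightarrow}\fiExt^i(k\C/\mi^n,V)$ is effaced by the monomorphism $V\hookrightarrow I$ for $i\geqslant 1$. By the uniqueness of universal $\delta$-functors, the two agree in all degrees.

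The main obstacle I anticipate is the bookkeeping around the filtered colimit: one must check that the transition maps in the system $n\mapsto\fiExt^i(k\C/\mi^n,V)$ are induced functorially (from the surjections $M(r)/\mi^{n}\twoheadrightarrow M(r)/\mi^{n+1}$ applied to a \emph{fixed} injective resolution of $V$), that these are compatible with connecting homomorphisms in the long exact sequence, and that exactness of filtered colimits then really does produce a $\delta$-functor — none of this is deep, but it is the part where a careless argument could go wrong. A secondary point worth a sentence is that the colimit is taken in $\C\Mod$ and computed degreewise in $k\Mod$, where filtered colimits are exact, so no extra hypothesis on $k$ is needed.
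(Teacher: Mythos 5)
Your proposal is correct and follows essentially the same route as the paper: degree $0$ is read off from Lemma \ref{fancyhom} together with the kernel-of-one-equals-kernel-of-all observation, and the higher degrees are identified via exactness of filtered colimits plus vanishing on injectives. The paper phrases the latter as "the filtered colimit of the $\fiExt$'s is the derived functor of the filtered colimit of the $\fiHom$'s," which is precisely your $\delta$-functor/effaceability argument spelled out in more detail.
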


\begin{proof}
The fact that $\lim_{\rightarrow} \fiExt^i(k\C/\mi^n,\bullet)$ are the derived functors of $\lim_{\rightarrow} \fiHom(k\C/\mi^n,\bullet)$ follows from the fact that filtered colimits are exact, as well as the relevant definitions. It therefore suffices to prove the first statement. This statement follows immediately from the previous lemma.\\
\end{proof}

This new perspective on the torsion functor will allow us to use what we already proved about $\sharp$-filtered modules to conclude that they are acyclic with respect to local cohomology.\\

\begin{corollary}\label{filacyclic}
If $V$ is a $\sharp$-filtered $\C$-module, then $\widetilde{H^i_\mi}(V) = 0$ for all $i$.\\
\end{corollary}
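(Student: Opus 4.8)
The plan is to reduce, via Proposition~\ref{homversion}, the vanishing of $\widetilde{H^i_\mi}(V)$ to the vanishing of the groups $\Ext^i_{k\C}(M(r)/\mi^n, V)$ for all $r \geqslant 0$, $n \geqslant 1$, and $i \geqslant 0$, since a filtered colimit of zero modules is zero. So the entire content is to show that, for $V$ a $\sharp$-filtered module, $\Ext^i_{k\C}(M(r)/\mi^n, V) = 0$ for all $i \geqslant 0$.

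First I would set up the short exact sequence coming from the very definition of $M(r)/\mi^n$, namely
\[
0 \to K^n_r \to M(r) \to M(r)/\mi^n \to 0,
\]
where $K^n_r$ is the submodule of $M(r)$ generated by $M(r)_{r+n}$. Since $M(r)$ is a free (hence projective) $\C$-module, applying $\Hom_{k\C}(\bullet, V)$ to this sequence gives $\Ext^i_{k\C}(M(r)/\mi^n, V) \cong \Ext^{i-1}_{k\C}(K^n_r, V)$ for $i \geqslant 2$, together with the low-degree piece
\[
0 \to \Hom_{k\C}(M(r)/\mi^n, V) \to \Hom_{k\C}(M(r),V) \to \Hom_{k\C}(K^n_r, V) \to \Ext^1_{k\C}(M(r)/\mi^n, V) \to 0.
\]
Now the key point is that $M(r)/\mi^n$ is a \emph{finitely generated torsion} module (it is supported in degrees $r, r+1, \dots, r+n-1$), and therefore so is its submodule-free quotient data; more to the point, $K^n_r$, being a submodule of the finitely generated module $M(r)$, is finitely generated by the Noetherian property, and it is generated in a single degree $r+n$, so it too is (a quotient of a free module shifted up, but not torsion — rather it maps onto... ) — here I would instead directly invoke Theorem~\ref{charfil}(4), which states precisely that a $\sharp$-filtered module $V$ satisfies $\Ext^i_{k\C}(T,V) = 0$ for \emph{all} finitely generated torsion modules $T$ and all $i \geqslant 0$. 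Since $M(r)/\mi^n$ is finitely generated and torsion, this gives the claim immediately.

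So the cleanest route is: apply Proposition~\ref{homversion} to write $\widetilde{H^i_\mi}(V)$ as the filtered colimit over $n$ of $\bigoplus_r \Ext^i_{k\C}(M(r)/\mi^n, V)$; observe each $M(r)/\mi^n$ is a finitely generated torsion module; and apply characterization (5) of Theorem~\ref{charfil} (equivalently statement (1) of Theorem~\ref{homorth}) to conclude each such $\Ext$ group vanishes for all $i \geqslant 0$, hence the colimit is zero. \textbf{The main obstacle}, if any, is purely bookkeeping: one must be careful that $\widetilde{H^i_\mi}$ is computed in the big category $\C\Mod$ (which has enough injectives) while the vanishing input from Theorem~\ref{charfil} is phrased for finitely generated modules — but since $M(r)/\mi^n$ is itself finitely generated, the $\Ext$ computed via a projective resolution of $M(r)/\mi^n$ agrees in both categories, so there is no genuine difficulty. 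One should also note $V$ need not be finitely generated a priori in the statement of the corollary, but a $\sharp$-filtered module is by definition built from finitely many $M(W^{(i)})$ and the $\Ext$ vanishing against a finitely generated torsion module passes through finite filtrations by the long exact sequence, so this is harmless.
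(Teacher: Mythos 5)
Your proposal is correct and takes essentially the same route as the paper: reduce via Proposition~\ref{homversion} to the vanishing of $\Ext^i_{k\C}(M(r)/\mi^n,V)$, observe each $M(r)/\mi^n$ is a finitely generated torsion module, and invoke the torsion-versus-$\sharp$-filtered orthogonality (the paper cites Theorem~\ref{orthorel}(1), you cite its equivalent Theorem~\ref{charfil}(5) / Theorem~\ref{homorth}(1)). The extra bookkeeping you flag about the big category versus finitely generated modules is a reasonable sanity check but not a genuine obstacle, exactly as you conclude; the digression through the short exact sequence for $M(r)/\mi^n$ is unnecessary scaffolding that you correctly abandon.
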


\begin{proof}
Follows immediately from the previous proposition, as well as the first part of Theorem \ref{orthorel}.\\
\end{proof}

Proposition \ref{homversion} also implies that finitely generated torsion modules are acyclic with respect to $\widetilde{H^0_\mi}$. Indeed, while it is not the case that $\fiExt^i(k\C/\mi^n,V) = 0$ for all $n$ and all torsion modules $V$, this is the case for $n$ sufficiently large.\\

\begin{corollary} \label{toracyclic}
Let $V$ be a finitely generated, torsion $\C$-module. Then for all $n \gg 0$ and all $i \geqslant 1$, $\fiExt^i(k\C/\mi^n, V) = 0$. In particular, $\widetilde{H^i_\mi}(V) = 0$ for $i \geqslant 1$.\\
\end{corollary}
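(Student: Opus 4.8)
The plan is to combine the colimit description of the torsion functor from Proposition~\ref{homversion} with the observation that, once $n$ is large, the module $M(r)/\mi^n$ differs from the \emph{projective} module $M(r)$ only by a submodule generated in a degree exceeding the support of $V$, so that submodule is invisible to $\Hom_{k\C}(-,V)$ and all its derived functors. First I would record that, since $V$ is finitely generated and torsion, Lemma~\ref{fintor} gives $N := \td(V) < \infty$, and moreover $V_m = 0$ for every $m > N$.

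The key auxiliary claim is: if $W$ is a $\C$-module generated in degrees $> N$, then $\Ext^i_{k\C}(W,V) = 0$ for all $i \geqslant 0$. To prove it, I would build a free resolution $\cdots \to P_1 \to P_0 \to W \to 0$ in which every $P_j$ is a direct sum of modules $M(s)$ with $s > N$. This is possible by Nakayama's Lemma (Proposition~\ref{nak}): a surjection onto a module generated in degrees $> N$ can be chosen with source of this shape, and its kernel, being a submodule of such a $P_j$, vanishes in all degrees $\leqslant N$, hence is itself generated in degrees $> N$, so one iterates. Since $\Hom_{k\C}(M(s),V) \cong V_s = 0$ for $s > N$ by Proposition~\ref{yoneda}, the complex $\Hom_{k\C}(P_\bullet, V)$ is identically zero, which gives the claim.

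Now fix any integer $n > N$. For each $r \geqslant 0$ there is a short exact sequence $0 \to K_{r,n} \to M(r) \to M(r)/\mi^n \to 0$, where $K_{r,n}$ is the submodule of $M(r)$ generated by $M(r)_{r+n}$; in particular $K_{r,n}$ is generated in degree $r+n > N$, so the auxiliary claim applies to it. Feeding this sequence into the long exact sequence for $\Ext^\bullet_{k\C}(-,V)$ and using that $M(r)$ is projective yields $\Ext^i_{k\C}(M(r)/\mi^n, V) \cong \Ext^{i-1}_{k\C}(K_{r,n},V) = 0$ for $i \geqslant 2$, while $\Ext^1_{k\C}(M(r)/\mi^n,V)$ is a quotient of $\Hom_{k\C}(K_{r,n},V) = 0$. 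Summing over $r$ gives $\fiExt^i(k\C/\mi^n,V) = 0$ for all $i \geqslant 1$ and all $n > N$, and then Proposition~\ref{homversion} gives $\widetilde{H^i_\mi}(V) = \lim_{\rightarrow} \fiExt^i(k\C/\mi^n, V) = 0$ for $i \geqslant 1$. The only step requiring genuine care is the auxiliary claim, and specifically the verification that syzygies of a module generated in high degrees remain generated in high degrees; the rest is formal bookkeeping with long exact sequences and the colimit description.
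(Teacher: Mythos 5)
Your proof is correct and follows essentially the same route as the paper: both reduce via the short exact sequence $0 \to K_{r,n} \to M(r) \to M(r)/\mi^n \to 0$ to showing that $\Ext^{i-1}_{k\C}(K_{r,n},V)$ vanishes, and both establish this by constructing a free resolution of $K_{r,n}$ whose terms are free modules $M(s)$ with $s > \td(V)$, so that $\Hom_{k\C}(-,V)$ annihilates the entire resolution. If anything, you are slightly more careful than the paper in spelling out, via Nakayama, why the syzygies stay generated in high degrees, and in choosing $n > \td(V)$ uniformly in $r$, which is what is actually needed to conclude that the direct sum $\fiExt^i(k\C/\mi^n,V)=\bigoplus_r \Ext^i_{k\C}(M(r)/\mi^n,V)$ vanishes for a single $n$.
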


\begin{proof}
Fix some integer $r \geqslant 0$ and $i \geqslant 1$. It suffices to show that $\Ext^i_{k\C}(M(r)/\mi^n,V) = 0$ for all $n \gg 0$. Write $K^{(r,n)}$ for the submodule of $M(r)$ generated by $M(r)_{n+r}$. Then by definition there is an exact sequence,
\[
0 \rightarrow K^{(r,n)} \rightarrow M(r) \rightarrow M(r)/\mi^n \rightarrow 0.
\]
Using the fact that $M(r)$ is projective, we conclude that for all $i \geqslant 1$ there is an exact sequence
\[
\Ext^{i-1}_{k\C}(K^{(r,n)},V) \rightarrow \Ext^i_{k\C}(M(r)/\mi^n,V) \rightarrow 0.
\]
Choose $n$ such that $r + n > \td(V)$. We may construct a projective resolution of $K^{(r,n)}$, say $F^\bullet \rightarrow  K^{(r,n)} \rightarrow 0$, such that for all $j$, and all $m < r+n$, $F^j_{m} = 0$. The module $\Ext^{i-1}_{k\C}(K^{(r,n)},V)$ is a subquotient of the module $\Hom_{k\C}(F^{i-1},V)$, which is zero by Proposition \ref{yoneda} and our choice of $n$. This completes the proof of the first statement. The second statement is an immediate consequence of Proposition \ref{homversion}.\\
\end{proof}

\subsection{Computing Local Cohomology}

In this section we present a complex $\mathcal{C}^\bullet V$, associated to a finitely generated $\C$-module $V$, whose cohomology modules are precisely the local cohomology modules of $V$. This complex will allow us to show that the local cohomology modules of $V$ are always finitely generated, and will allow us to relate local cohomology to the Nagpal number and the regularity of $V$.\\

Let $V$ be a finitely generated $\C$-module. By Nagpal's Theorem, we may find an integer $b_{-1}$ such that $\Sigma_{b_{-1}}V$ is $\sharp$-filtered. This yields an exact sequence
\[
V \stackrel{\tau_{b_{-1}}}\rightarrow \Sigma_{b_{-1}}V \rightarrow Q^{(0)} \rightarrow 0
\]
where $\tau_{b_{-1}}$ is the map defined in remark \ref{dervprop}. Call $F^0 := \Sigma_{b_{-1}}V$, and recall from Remark \ref{dervprop} that $\gd(Q^{(0)}) < \gd(V)$. Proceeding inductively, we may find an integer $b_i$ for which $F^{i+1} := \Sigma_{b_i}Q^{(i)}$ is $\sharp$-filtered. We also have maps $\partial^i:F^i \rightarrow F^{i+1}$ defined by the composition of the quotient map $F^i \rightarrow Q^{(i)}$, and the map $\tau_{b_{i+1}}:Q^{(i)} \rightarrow F^{i+1}$. Putting it all together, we obtain a complex
\[
\mathcal{C}^\bullet V : 0 \rightarrow V \rightarrow F^{0} \rightarrow \ldots \rightarrow F^{n} \rightarrow 0
\]
Note that this complex is necessarily bounded by our observation that the generating degree of $Q^{(i)}$ is always strictly less than that of $Q^{(i-1)}$. If we define $Q^{(-1)} := V$, then one also notes that
\[
H^{i}(\mathcal{C}^\bullet V) = \widetilde{H^0_\mi}(Q^{i})
\]
The complex $\mathcal{C}^\bullet V$ was first introduced by Nagpal in \cite[Theorem A]{N} and was rediscovered by the first author and Yu in \cite[Theorem 1.7]{LY}. Following this, the first author used this complex to prove bounds on the regularity and the Nagpal number of $V$, which we saw in Theorem \ref{filcombounds}.\\

\begin{remark}
One may have noted that the construction of the complex $\mathcal{C}^\bullet V$ depended on the integers $b_i$. Indeed, the assignment $V \mapsto \mathcal{C}^\bullet V$ is non-functoral in the category of chain complexes of $\C$-modules. However, this assignment is functoral in the derived category. That is to say, choosing different values for the integers $b_i$ yields a complex which is quasi-isomorphic to the original complex.\\
\end{remark}

One thing that is important for the present work, is that this complex actually computes the local cohomology modules of $V$.\\

\begin{proof}[Proof of Theorem \ref{coequiv}]
Recall the modules $Q^{(i)}$ defined during the construction of $\filcom V$. We have already noted that $H^i(\filcom V) \cong \widetilde{H^0_\mi}(Q^{(i)})$. We will prove that
\[
\widetilde{H^0_\mi}(Q^{(i)}) \cong \widetilde{H^{i+1}_\mi}(V).
\]
The claim is clear when $i = -1$. Otherwise, we have an exact sequence
\[
0 \rightarrow Q^{(i)}_T \rightarrow Q^{(i)} \rightarrow Q^{(i)}_F \rightarrow 0
\]
where $Q^{(i)}_T$ is the torsion part of $Q^{(i)}$ and $Q^{(i)}_F$ is the torsion free part. Corollary \ref{toracyclic} implies $\widetilde{H^i_\mi}(Q^{(i)}_F) \cong \widetilde{H^{i}_\mi}(Q^{(i)})$ for $i \geqslant 1$. Next, we look at the exact sequence
\[
0 \rightarrow Q^{(i)}_F \rightarrow F^{i+1} \rightarrow Q^{(i+1)} \rightarrow 0
\]
and apply Corollary \ref{filacyclic} to conclude
\[
\widetilde{H^0_\mi}(Q^{(i+1)}) \cong \widetilde{H^{1}_\mi}(Q^{(i)}_F) \cong \widetilde{H^{1}_\mi}(Q^{(i)}) .
\]
We reach our desired conclusion by induction.\\

The bounds given in the theorem follow immediately from Theorem \ref{filcombounds}.\\
\end{proof}

We are now free for the remainder of the paper to consider all local cohomology modules as existing inside the category $\C\module^{tor}$. In particular, it is no longer necessarily to distinguish $H^0_\mi$ from $\widetilde{H^0_\mi}$.\\

\begin{definition}
We write $H^i_\mi:\C\module \rightarrow \C\module^{tor}$ to denote the $i$-th derived functor of $H^0_\mi$.\\
\end{definition}

\subsection{Applications of Theorem \ref{coequiv}}

We spend this section exploring the plethora of applications of Theorem \ref{coequiv}. In particular, we will prove Theorem \ref{localcohomologybounds}, along with many other results. To begin, we obtain a new homological characterization of $\sharp$-filtered modules.\\

\begin{proposition}\label{exactfil}
Let $V$ be a finitely generated $\C$-module. Then $H^i_\mi(V) = 0$ for all $i \geqslant 0$ if and only if $V$ is $\sharp$-filtered. In particular, $V$ is $\sharp$-filtered if and only if $\filcom V$ is exact.\\
\end{proposition}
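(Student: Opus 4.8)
The plan is to pivot everything through three conditions on a finitely generated $\C$-module $V$: (A) $H^i_\mi(V) = 0$ for all $i \geqslant 0$; (B) $V$ is $\sharp$-filtered; and (C) the complex $\filcom V$ is exact. The equivalence (A)$\iff$(C) comes essentially for free from Theorem \ref{coequiv}, which gives $H^i(\filcom V) \cong H^{i+1}_\mi(V)$ for every $i \geqslant -1$; thus $\filcom V$ is exact exactly when $H^j_\mi(V) = 0$ for all $j \geqslant 0$. It therefore remains only to establish (B)$\Rightarrow$(A) and (C)$\Rightarrow$(B).

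The implication (B)$\Rightarrow$(A) is precisely Corollary \ref{filacyclic}. For (C)$\Rightarrow$(B) I would run a dimension-shifting argument resting on the homological characterization of $\sharp$-filtered modules in Theorem \ref{homacyclic}. Write the (now exact) complex as $0 \to V \to F^0 \to F^1 \to \ldots \to F^n \to 0$ with differentials $\partial^j \colon F^j \to F^{j+1}$ and each $F^j$ a $\sharp$-filtered module, and set $K^j := \ker \partial^j$, so that exactness produces short exact sequences $0 \to K^j \to F^j \to K^{j+1} \to 0$ for $0 \leqslant j \leqslant n-1$, with $K^0 \cong V$ and $K^n = F^n$. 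Now descend on $j$: assuming $K^{j+1}$ is $\sharp$-filtered, hence homology acyclic by Theorem \ref{homacyclic}, the long exact sequence of the homology functors attached to $0 \to K^j \to F^j \to K^{j+1} \to 0$ places $H_i(K^j)$ between $H_{i+1}(K^{j+1})$ and $H_i(F^j)$; both vanish for $i \geqslant 1$ because $F^j$ and $K^{j+1}$ are homology acyclic, so $H_i(K^j) = 0$ for $i \geqslant 1$ and $K^j$ is $\sharp$-filtered by Theorem \ref{homacyclic}. Carrying this down to $j = 0$ shows that $V \cong K^0$ is $\sharp$-filtered.

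The "in particular" clause is then just the equivalence (B)$\iff$(C) read off the chain above. I do not expect a serious obstacle here: the only step needing any care is the bookkeeping in the descending induction, and that is a routine application of the long exact sequence once Theorems \ref{coequiv} and \ref{homacyclic} are in hand — the substantive input has already been absorbed into those results and into Theorem \ref{filcombounds}, which underlies the very construction of $\filcom V$.
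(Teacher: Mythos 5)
Your proof is correct and follows essentially the same route as the paper: the paper also gets the backward direction from Corollary \ref{filacyclic} and the forward direction by combining Theorem \ref{coequiv} with Theorem \ref{homacyclic}. The only difference is that you spell out the descending dimension-shift through the short exact sequences $0 \to K^j \to F^j \to K^{j+1} \to 0$, which the paper leaves implicit in its appeal to Theorem \ref{homacyclic}.
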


\begin{proof}
The backwards direction follows from Corollary \ref{filacyclic}. If $H^i_\mi(V) = 0$ for all $i$, then $\filcom V$ is exact in all degrees by Theorem \ref{coequiv}. This implies $V$ is $\sharp$-filtered by Theorem \ref{homacyclic}.\\
\end{proof}

This will allow us to classify all modules which are acyclic with respect to the torsion functor.\\

\begin{proposition}
Let $V$ be a finitely generated $\C$-module. Then $V$ is acyclic with respect to the torsion functor if and only if its torsion free part $V_F$ is $\sharp$-filtered.\\
\end{proposition}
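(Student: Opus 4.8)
The plan is to reduce the statement to the previously obtained homological characterization of $\sharp$-filtered modules (Proposition~\ref{exactfil}, which itself rests on Theorem~\ref{coequiv}) by exploiting the canonical torsion sequence together with the computation of local cohomology on torsion modules in Corollary~\ref{toracyclic}. The starting point is the short exact sequence
\[
0 \rightarrow V_T \rightarrow V \rightarrow V_F \rightarrow 0,
\]
where $V_T$ is finitely supported by Lemma~\ref{fintor}. Applying the long exact sequence of derived functors for $H^i_\mi$ and feeding in Corollary~\ref{toracyclic} (which says $H^i_\mi(V_T) = 0$ for $i \geqslant 1$, since $V_T$ is a finitely generated torsion module), one immediately gets isomorphisms $H^i_\mi(V) \cong H^i_\mi(V_F)$ for all $i \geqslant 1$, together with a four-term exact sequence $0 \to H^0_\mi(V_T) \to H^0_\mi(V) \to H^0_\mi(V_F) \to 0$ at the bottom. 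Since $V_F$ is torsion free, $H^0_\mi(V_F) = 0$, so in fact $H^0_\mi(V_T) = H^0_\mi(V) = V_T$, consistent with $V_T$ being its own maximal torsion submodule.

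Now I would argue both directions. For the ``if'' direction: suppose $V_F$ is $\sharp$-filtered. Then $H^i_\mi(V_F) = 0$ for all $i \geqslant 0$ by Corollary~\ref{filacyclic}, hence $H^i_\mi(V) = 0$ for $i \geqslant 1$ by the isomorphisms just established, which is exactly the statement that $V$ is acyclic with respect to the torsion functor. For the ``only if'' direction: suppose $H^i_\mi(V) = 0$ for all $i \geqslant 1$. By the same isomorphisms $H^i_\mi(V_F) = 0$ for all $i \geqslant 1$, and since $V_F$ is torsion free we also have $H^0_\mi(V_F) = 0$; thus $H^i_\mi(V_F) = 0$ for \emph{all} $i \geqslant 0$. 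Proposition~\ref{exactfil} then forces $V_F$ to be $\sharp$-filtered, completing the proof.

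I do not expect a serious obstacle here — this is essentially a diagram chase combined with two acyclicity results already in hand. The one point that requires a little care is ensuring that the torsion part $V_T$ of a finitely generated module is itself finitely generated (so that Corollary~\ref{toracyclic} genuinely applies); this follows from the Noetherian property (Theorem~\ref{SS2}, or rather its invocation as the Noetherian property for $\C\module$), since $V_T$ is a submodule of the finitely generated module $V$. Everything else is formal manipulation of the long exact sequence, so the proof should be short.
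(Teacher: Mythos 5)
Your proof is correct and follows essentially the same route as the paper: the canonical torsion sequence, Corollary~\ref{toracyclic} to identify $H^i_\mi(V)$ with $H^i_\mi(V_F)$ for $i \geqslant 1$, the observation that $H^0_\mi(V_F) = 0$, and then Proposition~\ref{exactfil}. You are simply more explicit in separating the two directions and in noting that $V_T$ is finitely generated via the Noetherian property, which the paper leaves implicit.
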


\begin{proof}
The exact sequence
\[
0 \rightarrow V_T \rightarrow V \rightarrow V_{F} \rightarrow 0
\]
implies that $H^i_\mi(V) \cong H^i_\mi(V_F)$ for all $i \geqslant 1$ by Corollary \ref{toracyclic}. The previous proposition now implies our result.\\
\end{proof}

The fact that the complex $\filcom V$ is bounded from above implies that $H^i_\mi(V) = 0$ for all $i \gg 0$. We may therefore make sense of the following definition.\\

\begin{definition}
Let $V$ be a finitely generated $\C$-module which is not $\sharp$-filtered. Then we define its \textbf{cohomological dimension} to be the quantity
\[
\cd(V) := \sup\{i \mid H^i_\mi(V) \neq 0\} \in \mathbb{N}.
\]
\end{definition}

\begin{proposition}
If $V$ is a finitely generated $\C$-module which is not $\sharp$-filtered, Then any non-trivial local cohomology modules $H^i_\mi(V)$ must have
\[
\depth(V) \leq i \leq \cd(V).
\]
Moreover, $H^i_\mi(V) \neq 0$ at each of the two extremes.\\
\end{proposition}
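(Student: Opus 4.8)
The plan is to handle the two extremes separately and then observe that they sandwich every nonzero local cohomology module. For the upper end, the inequality $i \le \cd(V)$ is immediate from the definition of $\cd(V)$ as a supremum; what needs justifying is that this supremum is a finite integer that is actually attained. Since $\filcom V$ is a bounded complex, Theorem~\ref{coequiv} gives $H^i_\mi(V) = 0$ for $i \gg 0$, and since $V$ is not $\sharp$-filtered, Proposition~\ref{exactfil} guarantees $H^i_\mi(V) \neq 0$ for some $i$. Hence $\cd(V) \in \mathbb{N}$ is well defined and $H^{\cd(V)}_\mi(V) \neq 0$.

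For the lower end I would prove that $H^i_\mi(V) = 0$ whenever $i < \depth(V)$ and that $H^{\depth(V)}_\mi(V) \neq 0$. First note $\depth(V) \neq \infty$, since a module of infinite depth is $\sharp$-filtered by Theorem~\ref{depthclass}, contrary to hypothesis. The case $\depth(V) = 0$ is immediate: then $V$ is not torsion free, so $H^0_\mi(V) = V_T \neq 0$. For $a := \depth(V) > 0$, the key input is the exact sequence $0 \to V \to X_{a-1} \to \cdots \to X_0 \to V' \to 0$ supplied by Theorem~\ref{depthclass}, in which every $X_j$ is $\sharp$-filtered and $V'$ is not torsion free. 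Splitting it into the $a$ short exact sequences $0 \to W_{j+1} \to X_j \to W_j \to 0$ for $j = a-1, \ldots, 0$, with $W_a := V$ and $W_0 := V'$, I would apply the long exact sequence in local cohomology to each. Corollary~\ref{filacyclic} says $H^i_\mi(X_j) = 0$ for \emph{all} $i$, so the long exact sequences collapse to isomorphisms $H^{i+1}_\mi(W_{j+1}) \cong H^i_\mi(W_j)$ for $i \ge 0$ together with $H^0_\mi(W_{j+1}) = 0$. Chaining these along $j = a-1, \ldots, 0$ yields $H^{i+a}_\mi(V) \cong H^i_\mi(V')$ for all $i \ge 0$ and $H^i_\mi(V) = 0$ for $0 \le i \le a-1$; in particular $H^a_\mi(V) \cong H^0_\mi(V') = (V')_T \neq 0$ because $V'$ is not torsion free. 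This establishes both the lower bound and nonvanishing at the bottom extreme, and together with the upper-end discussion finishes the proof.

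I do not anticipate a serious obstacle: the whole argument rests on the structural description of depth in Theorem~\ref{depthclass} and the total $\mi$-acyclicity of $\sharp$-filtered modules in Corollary~\ref{filacyclic}. The only points requiring genuine care are the index bookkeeping in the dimension-shifting step, the degenerate cases $a = 0$ and $a = 1$, and invoking Corollary~\ref{filacyclic} in its full strength (including $H^0_\mi(X_j) = 0$, which is precisely what propagates the vanishing downward to $H^0_\mi(W_{j+1})$ and hence below degree $\depth(V)$). If Theorem~\ref{localcohomologybounds}(1) is already available at this stage, the lower-end statement follows from it directly, but the argument above is self-contained.
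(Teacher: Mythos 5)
Your proof is correct, and it proceeds in the opposite direction from the paper's. The paper's argument starts from the local cohomology side: it sets $\delta := \min\{i : H^i_\mi(V) \neq 0\}$, reads off from Theorem~\ref{coequiv} (via the structure of $\filcom V$) an exact sequence $0 \to V \to F^0 \to \cdots \to F^{\delta-1} \to V' \to 0$ with every $F^j$ $\sharp$-filtered and $V'$ having torsion, and then invokes the equivalence in Theorem~\ref{depthclass}(3) to conclude $\depth(V) = \delta$ in one stroke, getting both the vanishing below $\depth(V)$ and the nonvanishing at $\depth(V)$. You instead start from the depth side: you set $a := \depth(V)$, take the exact sequence supplied by Theorem~\ref{depthclass}(3), and dimension-shift through the long exact sequence of local cohomology using only Corollary~\ref{filacyclic}, arriving at the isomorphisms $H^{i+a}_\mi(V) \cong H^i_\mi(V')$ and the vanishing $H^i_\mi(V) = 0$ for $i < a$. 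Your route buys a bit more: it is independent of Theorem~\ref{coequiv} and of the particular complex $\filcom V$, using only the total $\mi$-acyclicity of $\sharp$-filtered modules and the structural description of depth, and as a by-product it exhibits the higher local cohomology of $V$ as the local cohomology of $V'$ in shifted degrees. The paper's route is more compact, but it leans on the "if" direction of Theorem~\ref{depthclass}(3) -- that a length-$\delta$ sequence of the given form forces $\depth(V) = \delta$ -- whereas you only need the "only if" direction together with the trivial depth-$0$ case. Your treatment of the upper extreme is likewise correct and is essentially what the paper takes for granted from the definition of $\cd(V)$ and Proposition~\ref{exactfil}.
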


\begin{proof}
We only need to show that $H^i_\mi(V) = 0$ for all $i < \depth(V)$, and that $H^i_\mi(V) \neq 0$ for $i = \depth(V)$. If $\delta$ is the smallest value for which $H^i_\mi(V) \neq 0$, then Theorem \ref{coequiv} implies that there is an exact sequence,
\[
0 \rightarrow V \rightarrow F^0 \rightarrow \ldots \rightarrow F^{\delta-1} \rightarrow V' \rightarrow 0
\]
where $F^i$ is $\sharp$-filtered, and $V'$ has torsion. This implies that $\depth(V) = \delta$ by Theorem \ref{depthclass}.\\
\end{proof}

\begin{corollary}
Let $V$ be a finitely generated $\C$-module which is not $\sharp$-filtered. Then $\cd(V) \leq \gd(V)$.\\
\end{corollary}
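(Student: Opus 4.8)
The plan is to use Theorem \ref{coequiv}, which identifies $H^i_\mi(V)$ with $H^{i-1}(\filcom V)$ for $i \geq 0$, together with the bound $n \leq \gd(V)$ on the length of the complex $\filcom V$ coming from Theorem \ref{filcombounds}. Since $\filcom V$ is the complex $0 \to V \to F^0 \to \cdots \to F^n \to 0$ with $n \leq \gd(V)$, its cohomology vanishes in cohomological degrees above $n$; by the isomorphism $H^i_\mi(V) \cong H^{i-1}(\filcom V)$, this forces $H^i_\mi(V) = 0$ for all $i > n+1$. A priori this only gives $\cd(V) \leq \gd(V) + 1$, so the main point is to shave off one degree.

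To get the sharper bound $\cd(V) \leq \gd(V)$, I would look more carefully at the top of the complex. Recall from the construction that $F^{n}$ arises as $\Sigma_{b_{n-1}} Q^{(n-1)}$ and that $Q^{(n)} = 0$ because the generating degrees $\gd(Q^{(i)})$ are strictly decreasing (Remark \ref{dervprop}). Concretely, the last nonzero cohomology of $\filcom V$ is $H^n(\filcom V) = \widetilde{H^0_\mi}(Q^{(n)})$, and once $\gd(Q^{(i)})$ has dropped to $0$ one more derivative step kills it or makes it $\sharp$-filtered; in any case $Q^{(n)}$ itself need not be zero but $F^{n+1}$ does not appear. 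The cleanest route: the complex has length exactly $n$ with $n \leq \gd(V)$, so $H^i(\filcom V) = 0$ for $i > n \geq \gd(V)$ is automatic, but $H^{\gd(V)}(\filcom V)$ could still be nonzero. Translating through Theorem \ref{coequiv}, $H^i_\mi(V) = H^{i-1}(\filcom V) = 0$ once $i - 1 > n$, i.e. $i > n + 1$. So I actually need to show the very top cohomology $H^n(\filcom V)$ contributes to $H^{n+1}_\mi(V)$ and argue this vanishes when $n = \gd(V)$, OR re-examine whether the complex can be taken to have length $< \gd(V)$ in a relevant case.

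The key observation that does the job: by Theorem \ref{filcombounds}(2), $\gd(F^i) \leq \gd(V) - i$, so $F^{\gd(V)}$, if nonzero, is generated in degree $0$, hence $F^{\gd(V)} \cong M(W)$ for a $kG_0$-module $W$, and the last map $F^{\gd(V)-1} \to F^{\gd(V)}$ — composed of a quotient onto $Q^{(\gd(V)-1)}$ followed by the shift map $\tau$ — has the property that $Q^{(\gd(V)-1)}$ has generating degree $\leq 1$, while its torsion-free part, being a nonzero submodule of a $\sharp$-filtered module generated in degree $0$, is either $0$ or all of $F^{\gd(V)}$; running the cohomology computation $H^{\gd(V)}(\filcom V) = \widetilde{H^0_\mi}(Q^{(\gd(V))})$ and noting $Q^{(\gd(V))}$ has generating degree strictly less than $1$, i.e. is torsion concentrated in degree $0$ or zero, one checks it is actually acyclic so $H^{\gd(V)}(\filcom V) = 0$. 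Then Theorem \ref{coequiv} gives $H^{\gd(V)+1}_\mi(V) = H^{\gd(V)}(\filcom V) = 0$, whence $\cd(V) \leq \gd(V)$.

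The main obstacle I anticipate is making the final degree-counting argument airtight: one must track carefully how the generating degree decreases at the last step and confirm that the terminal quotient $Q^{(\gd(V))}$ either vanishes or is a torsion module concentrated in a single degree on which $\widetilde{H^0_\mi}$ acts as the identity in a way that still forces the relevant cohomology to vanish — i.e. that the complex $\filcom V$ is genuinely exact at its last spot when it has maximal length $\gd(V)$. A slicker alternative, which I would try first, is purely formal: $\filcom V$ has length $n \leq \gd(V)$, so for degree reasons $H^i_\mi(V) \cong H^{i-1}(\filcom V)$ vanishes for $i - 1 > \gd(V)$; combined with the observation that whenever $n = \gd(V)$ the last term $F^n$ is generated in degree $0$ and one can absorb it — perhaps by noting $\gd(Q^{(n-1)}) \leq 1$ forces the map into $F^n$ to be surjective modulo torsion — the cokernel complication disappears. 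If that formal argument works it avoids the explicit computation entirely; if not, the explicit bookkeeping above completes the proof.
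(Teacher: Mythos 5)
Your strategy---combine Theorem \ref{coequiv} with the degree bookkeeping from Theorem \ref{filcombounds} and Remark \ref{dervprop}---is the right one, and it is what the paper has in mind (no explicit proof is given). But your bookkeeping is off by one degree, and that gap is not cosmetic: as written, your argument does not actually close.

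Here is the issue. Remark \ref{dervprop} gives the \emph{strict} inequality $\gd(\coker \tau_b) < \gd(W)$, so $\gd(Q^{(0)}) \leq \gd(V) - 1$, and inductively $\gd(Q^{(i)}) \leq \gd(V) - 1 - i$. Taking $i = \gd(V)$ gives $\gd(Q^{(\gd(V))}) \leq -1$, which forces $Q^{(\gd(V))} = 0$. You instead assert $\gd(Q^{(\gd(V)-1)}) \leq 1$ and hence $\gd(Q^{(\gd(V))}) < 1$, i.e.\ $\leq 0$; both are one degree too weak. Under your weaker bound, $Q^{(\gd(V))}$ could in principle be a nonzero finitely generated torsion module supported in degree $0$. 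You then claim ``one checks it is actually acyclic so $H^{\gd(V)}(\filcom V) = 0$,'' but this is exactly where the argument fails: for a nonzero torsion module $T$, the identification $H^{\gd(V)}(\filcom V) = \widetilde{H^0_\mi}(Q^{(\gd(V))})$ gives $\widetilde{H^0_\mi}(T) = T \neq 0$ --- the torsion functor is the \emph{identity} on torsion modules, not zero. So nothing in your write-up rules out $H^{\gd(V)+1}_\mi(V) \neq 0$, which is precisely the extra degree you correctly identified you needed to shave off.

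The fix is short, and in fact it lets you avoid all discussion of the length $n$ of the complex and of whether $F^{n+1}$ appears. By Theorem \ref{coequiv} and the identification $H^{i}(\filcom V) = \widetilde{H^0_\mi}(Q^{(i)})$, for every $i \geqslant 1$ one has $H^i_\mi(V) \cong \widetilde{H^0_\mi}(Q^{(i-1)})$, a submodule of $Q^{(i-1)}$. Using the strict inequality from Remark \ref{dervprop} inductively, $\gd(Q^{(i-1)}) \leqslant \gd(V) - i$; hence for $i > \gd(V)$ one gets $\gd(Q^{(i-1)}) < 0$, so $Q^{(i-1)} = 0$, and therefore $H^i_\mi(V) = 0$. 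Thus $\cd(V) \leqslant \gd(V)$. I would also drop the aside about the torsion-free part of $Q^{(\gd(V)-1)}$ being ``$0$ or all of $F^{\gd(V)}$'' --- it does not support the conclusion and distracts from the clean degree count.

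\end{document}
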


We next turn our attention to the relationship between local cohomology and the Nagpal number and regularity. We begin with the following observation.\\

\begin{theorem}
Let $V$ be a finitely generated $\C$-module which is not $\sharp$-filtered. Then
\[
N(V) = \max\{\td(H^i_\mi(V))\mid i \geqslant 0\} + 1.
\]
In particular,
\[
N(V) \leq \max\{\td(V),2\gd(V) - 2\} + 1.
\]
\end{theorem}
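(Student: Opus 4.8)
The plan is to read off both equalities from the complex $\filcom V$ and Theorem \ref{coequiv}. Write $F^0 = \Sigma_{b_{-1}} V$ so that $b_{-1} = N(V)$ by minimality (here I am using that $\Sigma_b V$ is $\sharp$-filtered exactly when $b \geq N(V)$, which is immediate from the definition of the Nagpal number and the fact that $\Sigma$ preserves $\sharp$-filtered modules, Proposition \ref{genetic}). The key point is that the construction of $\filcom V$ can, and should, be carried out with the \emph{smallest} possible shifts at each stage: choose $b_{-1} = N(V)$, and at each later stage choose $b_i$ minimal such that $\Sigma_{b_i} Q^{(i)}$ is $\sharp$-filtered, i.e. $b_i = N(Q^{(i)})$. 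First I would record the elementary fact that for a finitely generated $\C$-module $W$ one has $N(W) - 1 = \max\{\td(H^0_\mi(W)), N(W_F) - 1\}$ if $W_F \neq 0$, and $N(W) = \td(H^0_\mi(W)) + 1$ if $W$ is torsion; more simply, a torsion-free module $W'$ has $\Sigma_b W'$ $\sharp$-filtered iff $D^b W'$ (which is still torsion free) is $\sharp$-filtered, and combining with the short exact sequences $0 \to W_T \to W \to W_F \to 0$ one sees that $\Sigma_b W$ is $\sharp$-filtered iff $b \geq \td(W_T)$ and $\Sigma_b W_F$ is $\sharp$-filtered.

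The main step is then a downward induction along the complex. For the top nonzero cohomology, if $\cd(V) = c$ then $Q^{(c-1)}$ is a module whose torsion-free part $Q^{(c-1)}_F$ is $\sharp$-filtered (since the complex terminates: $F^c \to Q^{(c)} \to 0$ has no room for a further nonzero shift, so $Q^{(c)}_F$, being generated in degree $0$ or contributing nothing, forces $Q^{(c-1)}_F$ $\sharp$-filtered — more carefully, $Q^{(c)} = H^c(\filcom V) = H^0_\mi(Q^{(c)})$ is torsion). Then $N(Q^{(c-1)}) - 1 = \td(Q^{(c-1)}_T) = \td(H^0_\mi(Q^{(c-1)})) = \td(H^c_\mi(V))$ by Theorem \ref{coequiv}. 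Working back down, at stage $i$ the module $Q^{(i-1)}$ sits in $0 \to Q^{(i-1)}_F \to F^{i} \to Q^{(i)} \to 0$ with $F^i = \Sigma_{b_{i-1}} Q^{(i-1)}$ and $b_{i-1} = N(Q^{(i-1)})$; since $F^i$ is $\sharp$-filtered, $\Sigma_{b_{i-1}}$ also makes $Q^{(i-1)}_F$ $\sharp$-filtered, so $N(Q^{(i-1)}_F) \leq b_{i-1}$, and in fact the minimality of $b_{i-1}$ together with the previous bullet gives $b_{i-1} - 1 = \max\{\td(Q^{(i-1)}_T),\, N(Q^{(i-1)}_F) - 1\}$. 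Since $N(Q^{(i-1)}_F)$ is governed (via the same short exact sequence, using that $\Sigma$ is exact and preserves $\sharp$-filtered modules) by $N(Q^{(i)})$, an induction yields $N(Q^{(i-1)}) - 1 = \max\{\td(H^j_\mi(Q^{(i-1)}))\}$, and $H^j_\mi(Q^{(i-1)}) \cong H^{j+i}_\mi(V)$ by Theorem \ref{coequiv}; taking $i = 0$ gives the first displayed equality.

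For the inequality, apply Theorem \ref{coequiv}'s torsion-degree bounds directly: $\td(H^0_\mi(V)) = \td(V)$ and $\td(H^i_\mi(V)) \leq 2\gd(V) - 2i - 2 \leq 2\gd(V) - 4 \leq 2\gd(V) - 2$ for $i \geq 1$, so $\max\{\td(H^i_\mi(V)) \mid i \geq 0\} \leq \max\{\td(V),\, 2\gd(V) - 2\}$; adding $1$ gives the claim. (If $\gd(V) = 0$ then $V$ is a quotient of a free module generated in degree $0$; one checks separately that either $V$ is $\sharp$-filtered, excluded by hypothesis, or $\td(V) = \td(H^0_\mi(V))$ already dominates, so the bound holds trivially.)

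The hard part will be pinning down the first equality — specifically, verifying that the minimal choice of each $b_i$ really does make $N(Q^{(i-1)}_F)$ equal to, not merely bounded by, the relevant maximum of torsion degrees, so that no slack accumulates as one descends the complex. This amounts to the clean structural statement that for a torsion-free $\sharp$-filtered-eventually module $W'$, $N(W')$ equals the least $b$ with $D^b W'$ $\sharp$-filtered, combined with tracking how $D$ (equivalently $\Sigma$) interacts with the extension $0 \to Q^{(i-1)}_F \to F^i \to Q^{(i)} \to 0$; the exactness of $\Sigma$ and $D$ and Proposition \ref{genetic} make this manageable, but it is the one place where a genuine argument, rather than a citation, is needed.
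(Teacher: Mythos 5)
Your route to the first equality is genuinely different from the paper's and, as you yourself flag, incomplete. The paper's argument is much shorter and avoids the bookkeeping of minimal shifts $b_i$ entirely. It goes like this: Proposition \ref{exactfil} says $W$ is $\sharp$-filtered if and only if $H^i_\mi(W)=0$ for all $i\geq 0$. Because $\Sigma$ is exact and the complex $\filcom V$ is built by iterated shifts, one has $H^i_\mi(\Sigma_b V)\cong \Sigma_b H^i_\mi(V)$ for every $b$ and $i$ (the paper records this just before the last corollary). Since each $H^i_\mi(V)$ is a finitely generated torsion module, $\Sigma_b H^i_\mi(V)=0$ precisely when $b>\td(H^i_\mi(V))$. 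Putting these together, $\Sigma_b V$ is $\sharp$-filtered if and only if $b>\max_i\td(H^i_\mi(V))$, which is exactly $N(V)=\max_i\td(H^i_\mi(V))+1$. Both directions of the equality come for free; there is no need to chase minimal choices of $b_i$ down the complex.

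The gap in your approach is concentrated in the lemma you invoke: ``a torsion-free module $W'$ has $\Sigma_b W'$ $\sharp$-filtered iff $D^b W'$ is $\sharp$-filtered.'' One direction is not clear. From $0\to W'\to\Sigma W'\to DW'\to 0$ with $\Sigma W'$ $\sharp$-filtered, the long exact sequence gives $H_1(\Sigma W')=0\to H_1(DW')\to H_0(W')\to H_0(\Sigma W')$, and there is no a priori reason the last map is injective, so $H_1(DW')=0$ does not follow. (Also note $D^b$, the $b$-th iterate of $D$, is not the same as $\coker(\tau_b\colon W'\to\Sigma_b W')$, which may be what you had in mind.) Even with that repaired, your downward induction along the $Q^{(i)}$'s needs a separate argument that no slack accumulates, which you acknowledge. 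Since the whole difficulty evaporates once you use $H^i_\mi(\Sigma_b V)\cong\Sigma_b H^i_\mi(V)$, I would scrap the inductive scaffolding.

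Your derivation of the inequality from the torsion-degree bounds in Theorem \ref{coequiv} is fine, and matches the paper's one-line deduction from Theorem \ref{filcombounds}.
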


\begin{proof}
Proposition \ref{exactfil} tells us that $V$ is $\sharp$-filtered if and only if $\filcom V$ is exact. Using the fact that the shift functor is exact, and the construction of the complex $\filcom V$, it follows that $\Sigma_b V$ is filtered whenever
\[
b > \max\{\td(H^i(\filcom V)) \mid i \geqslant -1\} = \max\{\td(H^i_\mi(V))\mid i \geqslant 0\}.
\]
The desired bound follows immediately from Theorem \ref{filcombounds}.\\
\end{proof}

Note that this bound on the Nagpal number is not new. Indeed, it was proven by the first author in \cite[Theorem 1.3]{L2}. A similar bound was found by the second author in \cite[Theorem D]{R}.\\

\begin{theorem}\label{regbound}
Let $V$ be a finitely generated $\C$-module. Then,
\[
\reg(V) \leqslant \max\{\td(H^i_\mi(V)) + i\}.
\]
In particular,
\[
\reg(V) \leqslant \max\{2\gd(V ) - 1, \td(V )\}.
\]
\end{theorem}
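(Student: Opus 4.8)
The plan is to combine the computation of local cohomology via the complex $\filcom V$ (Theorem~\ref{coequiv}) with a spectral sequence argument, exactly as one does in the classical theory relating regularity to local cohomology over a polynomial ring. First I would recall from Theorem~\ref{coequiv} that $H^{i}_\mi(V) \cong H^{i-1}(\filcom V)$, so the hypercohomology spectral sequence of the complex $\filcom V$ with respect to the homology functors $H_j(\bullet) = \Tor_j^{k\C}(k\C/\mi,\bullet)$ has $E_2$-page
\[
E_2^{p,q} = H_q(H^p(\filcom V)),
\]
converging to $H_{q-p}$ of the total complex. Since each $F^i$ appearing in $\filcom V$ is $\sharp$-filtered, it is homology-acyclic by Theorem~\ref{homacyclic}, so the other hyperhomology spectral sequence degenerates and the abutment is simply $H_{\ast}(V)$ placed in the appropriate degrees (with a shift coming from the fact that $V$ sits in cohomological degree $-1$ of $\filcom V$). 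Concretely, this yields an inequality
\[
\hd_i(V) \;\leqslant\; \max_{p}\{\, \hd_{i+p+1}(H^p(\filcom V)) - \text{(correction)}\,\},
\]
but since each $H^p(\filcom V)$ is a finitely supported torsion module, all of its homological degrees are controlled by its torsion degree: indeed $\hd_j(T) \leqslant \td(T) $ need not hold on the nose, so the honest statement I would use is that for a module $T$ supported in degrees $\leqslant N$ one has $\hd_j(T) \leqslant N + (\text{something linear in } j)$ — more precisely, I would invoke the standard fact (provable by an easy induction using that $T$ is built from $kG_s$'s with $s \leqslant N$) that $\hd_j(T) \leqslant \td(T)$ fails but $\hd_j(T)\le \td(T)$ does hold after accounting for the minimal free resolution being supported above $\td(T)$. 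Let me restate the clean version: the key input is that for a finitely supported module $T$, $\hd_j(T) \le \td(T)$ for $j=0$ and in general the regularity of $T$ equals $\td(T)$, hence $\hd_j(T) \le \td(T) + j$ for all $j$. Feeding this into the spectral sequence, the contribution of $H^p(\filcom V) = H^p_\mi(V)$ (using $p \mapsto i$ reindexing from Theorem~\ref{coequiv}) to $\hd_i(V)$ is at most $\td(H^{p+1}_\mi(V)) + (i - p)$ plus bookkeeping, which after collecting terms gives $\hd_i(V) \le \max_j\{\td(H^j_\mi(V)) + j\} + i$, i.e. $\reg(V) \le \max_j\{\td(H^j_\mi(V)) + j\}$.

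For the second, explicit bound, I would simply substitute the numerical estimates from Theorem~\ref{coequiv}: we have $\td(H^0_\mi(V)) = \td(V)$ and $\td(H^i_\mi(V)) \le 2\gd(V) - 2i - 2$ for $1 \le i \le \gd(V)$, and $H^i_\mi(V) = 0$ otherwise. Therefore
\[
\max_i\{\td(H^i_\mi(V)) + i\} \le \max\Bigl\{\td(V),\ \max_{1 \le i \le \gd(V)}\bigl(2\gd(V) - 2i - 2 + i\bigr)\Bigr\} = \max\{\td(V),\ 2\gd(V) - 3\},
\]
which is bounded above by $\max\{2\gd(V) - 1, \td(V)\}$ as claimed (the slack is harmless). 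The case where $V$ is $\sharp$-filtered is trivial since then $\reg(V) = -\infty$ by Theorem~\ref{homacyclic}, and the case $\gd(V) = 0$ is handled directly since then $V \cong M(W)$ is $\sharp$-filtered.

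The main obstacle I anticipate is making the spectral sequence bookkeeping rigorous: one must be careful about the direction of the two spectral sequences of the double complex obtained by resolving $\filcom V$, about the sign/shift conventions coming from $V$ living in cohomological degree $-1$, and about the fact that $\reg$ is defined only via the \emph{higher} homologies $\hd_i$ with $i \ge 1$ (so the $H_0$ row must be excluded from the estimate, which is exactly why the $\td(H^0_\mi(V)) = \td(V)$ term enters only through the $i \ge 1$ comparisons rather than forcing $\reg(V) \ge \td(V)$ — consistent with the Remark after Definition~\ref{regdef}). An alternative, possibly cleaner, route that avoids spectral sequences entirely is to induct on $\cd(V)$ using the short exact sequences $0 \to Q^{(i)}_F \to F^{i+1} \to Q^{(i+1)} \to 0$ and $0 \to Q^{(i)}_T \to Q^{(i)} \to Q^{(i)}_F \to 0$ from the construction of $\filcom V$: the long exact sequences in $H_\ast$ relate $\hd_j(Q^{(i)})$ to $\hd_j$ of a $\sharp$-filtered module (which vanishes for $j \ge 1$) and to $\hd_{j+1}(Q^{(i+1)})$ and $\hd_j(Q^{(i)}_T)$, and since $Q^{(i)}_T = H^i_\mi(V)$ this unwinds to precisely the asserted bound. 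I would likely present this second argument as the main proof, as it uses only tools already in the excerpt.
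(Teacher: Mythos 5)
Your second ("alternative") argument is essentially the paper's proof, while your first (spectral-sequence) sketch is too loose as written and should not be the main argument. The paper proceeds by induction on $\gd(V)$: split $V$ into its torsion and torsion-free parts, note $H^i_\mi(V)\cong H^i_\mi(V_F)$ for $i\geq 1$ by Corollary~\ref{toracyclic}, take a single step $0\to V_F\to F\to C\to 0$ with $F$ $\sharp$-filtered and $\gd(C)<\gd(V)$, and compare: the long exact sequence in $H_\ast$ (using $H_i(F)=0$ for $i\geq 1$ by Theorem~\ref{homacyclic}) gives $\hd_i(V_F)\leq\hd_{i+1}(C)$, while the long exact sequence in $H^\ast_\mi$ (using Corollary~\ref{filacyclic}) gives $H^s_\mi(C)\cong H^{s+1}_\mi(V)$, so the inductive hypothesis transfers cleanly. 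Finally, $\reg(V_T)\leq\td(V_T)=\td(V)$ is invoked from \cite[Theorem 1.5]{L} and \cite[Corollary 3.11]{R}, and the two estimates are combined via $0\to V_T\to V\to V_F\to 0$. Your unwinding of the whole complex $\filcom V$ is the same induction written all at once; the one thing to fix is the indexing, since $Q^{(i)}_T\cong H^{i+1}_\mi(V)$ (not $H^i_\mi(V)$), with $Q^{(-1)}_T=H^0_\mi(V)$ — the off-by-one does not break the argument but you must track it carefully when collecting terms. You should also say where the fact $\reg(T)\leq\td(T)$ for finitely supported torsion $T$ comes from (you assert it, the paper cites it); it is a genuine input, not a formal triviality. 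On the second, explicit bound: your computation gives $\max\{\td(V),\,2\gd(V)-3\}$ from the displayed inequality in Theorem~\ref{coequiv}, which is a bit tighter than the stated $\max\{\td(V),\,2\gd(V)-1\}$; the stated constant in Theorem~\ref{regbound} actually comes from translating Theorem~\ref{filcombounds} directly (which yields $\td(H^i_\mi(V))\leq 2\gd(V)-2i$), so your conclusion is consistent but you should be aware the two displayed estimates differ by a shift of index. Overall your preferred route is correct and matches the paper's in substance.
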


\begin{proof}
We proceed by induction on the generating degree of $V$. The bound is vacuously true if $V = 0$. Assume that $V \neq 0$, and note we have an exact sequence
\[
0 \rightarrow V_T \rightarrow V \rightarrow V_F \rightarrow 0.
\]
Applying Corollary \ref{toracyclic} we find that $H^i_\mi(V) \cong H^i_\mi(V_F)$ for all $i \geqslant 1$. Because $V_F$ is torsion free, Remark \ref{dervprop} implies we have an exact sequence
\[
0 \rightarrow V_F \rightarrow F \rightarrow C \rightarrow 0
\]
where $F$ is $\sharp$-filtered, and $C$ is generated in strictly smaller degree. Applying the homology functor, and using Theorem \ref{homacyclic} we find by induction that for all $i \geqslant 1$
\[
\td(H_i(V_F)) - i = \td(H_{i+1}(C))- (i+1) +1 \leq \reg(C) + 1 \leq \max\{\td(H^s_\mi(C)) + s \mid s \geqslant 0\} + 1.
\]
If we instead apply the torsion functor to this exact sequence, we find that $H^s_\mi(C) \cong H^{s+1}_\mi(V)$ for all $s \geqslant 0$. Therefore, recalling that regularity only requires we bound the higher homologies (Definition \ref{regdef}),
\[
\reg(V_F) \leq \max\{\td(H^i_\mi(V)) + i \mid i > 0\}.
\]
On the other hand, $V_T$ is a torsion module and therefore \cite[Theorem 1.5]{L} \cite[Corollary 3.11]{R} imply that $\reg(V_T) \leq \td(V_T) = \td(V)$. Putting everything together we obtain our desired bound.\\

The second bound follows immediately from the first and Theorem \ref{filcombounds}.\\
\end{proof}

As with the bounds on the Nagpal number, the second bound in the above theorem is not new. The first author had discovered this bound earlier in \cite[Theorem 1.3]{L2}.\\

\subsection{A Conjecture and its Consequences}

In this section we state our primary conjecture, which firmly establishes the relationship between local cohomology and regularity. Following this, we take time to illustrate some interesting consequences of the conjecture.\\

\begin{conjecture}\label{mainconj}
Let $V$ be a finitely generated $\C$-module which is not $\sharp$-filtered. Then,
\[
\reg(V) = \max\{\td(H^i_\mi(V)) + i\}.
\]
\end{conjecture}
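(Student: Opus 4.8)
The plan is to prove the missing inequality $\reg(V) \geqslant \max\{\td(H^i_\mi(V)) + i\}$, since Theorem \ref{regbound} already supplies the reverse bound. The naive approach via induction on projective dimension is unavailable here, as the paper repeatedly stresses. Instead I would try to run an induction on $\gd(V)$ (equivalently on the length of the complex $\filcom V$), extracting from each cohomology module $H^i_\mi(V)$ a genuine syzygy class of $V$ sitting in homological degree roughly $i$.

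First I would handle the torsion contribution. Using the exact sequence $0 \to V_T \to V \to V_F \to 0$ together with Corollary \ref{toracyclic}, one has $H^i_\mi(V) \cong H^i_\mi(V_F)$ for $i \geqslant 1$, while $H^0_\mi(V) = V_T$. Since $\td(V_T) = \td(H^0_\mi(V))$ and (by \cite[Theorem 1.5]{L}, \cite[Corollary 3.11]{R}) $\reg(V_T) = \td(V_T)$ when $V_T \neq 0$ — here one needs the sharp statement that torsion modules have regularity \emph{equal to} their torsion degree, not merely bounded by it, which would require checking that $H_1$ of a torsion module genuinely achieves the bound — the $i=0$ term $\td(H^0_\mi(V))$ is realized inside $\reg(V)$ via the long exact sequence in homology attached to $0 \to V_T \to V \to V_F \to 0$, provided $H_1(V_F)$ does not cancel it; this cancellation analysis is a delicate point. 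So one reduces to showing $\reg(V_F) \geqslant \max\{\td(H^i_\mi(V_F)) + i \mid i \geqslant 1\}$ for torsion free $V_F$.

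For torsion free $V_F$, I would use the exact sequence $0 \to V_F \to F \to C \to 0$ with $F$ $\sharp$-filtered and $\gd(C) < \gd(V_F)$ (Remark \ref{dervprop}), so that $H^s_\mi(C) \cong H^{s+1}_\mi(V_F)$ for all $s \geqslant 0$ (applying the torsion functor and Corollary \ref{filacyclic}). Applying the homology functor to the same sequence and using that $F$ is homology-acyclic (Theorem \ref{homacyclic}), one gets $H_i(V_F) \cong H_{i+1}(C)$ for $i \geqslant 1$ and a connecting piece $0 \to H_1(V_F) \to H_0(C) \to H_0(F)$ in degree zero. Thus $\hd_i(V_F) = \hd_{i+1}(C)$ for $i \geqslant 1$, giving $\reg(V_F) = \reg(C) + 1$ \emph{as an equality} — and this is precisely the step where one must upgrade the inequality $\reg(V_F) \leqslant \reg(C)+1$ used in Theorem \ref{regbound} to an equality, i.e., one must rule out that the regularity of $C$ is ``wasted'' in a homological degree where $V_F$ has no corresponding class. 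Granting the equality, induction on $\gd$ gives $\reg(V_F) = \reg(C) + 1 = \max\{\td(H^s_\mi(C)) + s \mid s \geqslant 0\} + 1 = \max\{\td(H^{s+1}_\mi(V_F)) + (s+1) \mid s \geqslant 0\} = \max\{\td(H^i_\mi(V_F)) + i \mid i \geqslant 1\}$, as desired, and combining with the torsion part finishes the proof.

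The main obstacle — and the reason this is only stated as a conjecture — is precisely the two ``upgrade from $\leqslant$ to $=$'' steps above: one must show that no cancellation occurs in the long exact homology sequences attached to $0 \to V_T \to V \to V_F \to 0$ and $0 \to V_F \to F \to C \to 0$. In the classical polynomial-ring setting the analogous non-cancellation is guaranteed by the Hilbert Syzygy theorem (finite projective dimension forces the top syzygy to be detected), and it is exactly this input that is missing for $\C$-modules, most of which have infinite projective dimension. A possible route around it would be to prove a ``no ghost maps'' statement: that the natural map $H_0(C) \to H_0(F)$ always has image of torsion degree strictly less than $\reg(C) + 1$ whenever $\reg(C)$ is attained in positive homological degree — but I do not see how to establish this without essentially reproving a syzygy-type finiteness theorem, which is why the statement remains conjectural.
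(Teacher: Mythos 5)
This statement is a \emph{conjecture} in the paper: the authors prove only the inequality $\reg(V) \leqslant \max\{\td(H^i_\mi(V)) + i\}$ (Theorem \ref{regbound}) and explicitly leave the reverse inequality open, remarking that the natural induction on projective dimension cannot work because most finitely generated $\C$-modules have infinite projective dimension, and that a failure of the conjecture would implicate the essential role of the Hilbert Syzygy theorem in the classical polynomial-ring analogue. You have correctly recognized this, and your proposal is honest about not being a proof. So there is nothing to compare against; what I can do is assess whether you have correctly located the obstruction.

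Your diagnosis is essentially right, with one point worth sharpening. In the sequence $0 \to V_F \to F \to C \to 0$ with $F$ $\sharp$-filtered, the long exact sequence in homology together with Theorem \ref{homacyclic} gives \emph{isomorphisms} $H_n(V_F) \cong H_{n+1}(C)$ for all $n \geqslant 1$; there is no cancellation in positive degrees. The actual loss occurs because the term $H_1(C)$ maps only into $H_0(V_F)$, which the paper's definition of regularity deliberately excludes (see the remark after Definition \ref{regdef}). Thus $\reg(V_F) = \sup\{\hd_m(C) - m \mid m \geqslant 2\} + 1$, and this may be strictly smaller than $\reg(C) + 1$ precisely when $\reg(C)$ is witnessed only in $\hd_1(C)$. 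That is the ``wasted'' regularity you describe, and it is a genuine, concrete form of the obstruction. The other gap you flag — needing $\reg(V_T) = \td(V_T)$ as an equality rather than the inequality $\reg(V_T) \leqslant \td(V_T)$ quoted from \cite{L} and \cite{R}, together with possible cancellation between $H_\bullet(V_T)$ and $H_\bullet(V_F)$ in the long exact sequence for $0 \to V_T \to V \to V_F \to 0$ — is also real. Both gaps are of the ``no ghost syzygies'' type that, in the polynomial-ring case, the Hilbert Syzygy theorem closes by forcing a finite minimal resolution whose top term is detected; without that finiteness the induction does not close. In short: your proposal faithfully reproduces the one direction the paper proves, correctly identifies that the other direction is open, and correctly isolates where any proof must supply new input. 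It does not prove the conjecture, nor does it claim to.
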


We have already seen that $\max\{\td(H^i_\mi(V)) + i\}$ is an upper bound on the regularity of $V$ (Theorem \ref{regbound}). The opposite inequality seems to be much harder to prove. The reader familiar with classical local cohomology theory may recognize a similar statement from local cohomology of modules over a polynomial ring \cite[Theorem 4.3]{E}. What is interesting is that the proofs that $\reg(V) \geqslant \max\{\td(H^i_\mi(V)) + i\}$ in that context often proceed by induction on the projective dimension. Theorem \ref{homacyclic} suggests that such an approach will not work.\\

We now spend the remainder of this section detailing some corollaries to the above.\\

\begin{corollary}
Let $V$ be a finitely generated torsion module, and assume Conjecture \ref{mainconj}. Then
\[
\reg(V) = \td(V)
\]
\end{corollary}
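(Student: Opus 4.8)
The plan is to reduce the corollary to a computation of the local cohomology of a torsion module and then quote Conjecture \ref{mainconj}. First I would handle the degenerate case $V = 0$ separately (both sides are $-\infty$, but the zero module is $\sharp$-filtered so the corollary is vacuous there) and assume $V \neq 0$. Then I would observe that a nonzero finitely generated torsion module is never $\sharp$-filtered: every $\sharp$-filtered module is torsion free, being built from iterated extensions of the torsion free modules $M(W)$, and an extension of torsion free $\C$-modules is again torsion free (if a transition map kills $b$, its image in the torsion free quotient vanishes, so $b$ lies in the torsion free sub, hence $b = 0$). Therefore Conjecture \ref{mainconj} applies to $V$ and gives $\reg(V) = \max\{\td(H^i_\mi(V)) + i\}$, so it remains only to evaluate this maximum.

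The second step is the computation of $H^i_\mi(V)$ for a torsion module $V$. Since $V$ is torsion, its maximal torsion submodule is $V$ itself, so $H^0_\mi(V) = V$. For $i \geqslant 1$, Corollary \ref{toracyclic} tells us that finitely generated torsion modules are acyclic for the higher torsion functors, so $H^i_\mi(V) = 0$. Consequently
\[
\max\{\td(H^i_\mi(V)) + i\} = \td(H^0_\mi(V)) = \td(V),
\]
where $\td(V)$ is finite and equal to the support of $V$ by Lemma \ref{fintor}. Plugging this into the conjectured equality yields $\reg(V) = \td(V)$, as desired.

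I do not expect a genuine obstacle inside this corollary: once Conjecture \ref{mainconj} is granted, the argument is a two-line unwinding of the definition of $H^0_\mi$ together with Corollary \ref{toracyclic}. It is worth recording, though, that the inequality $\reg(V) \leqslant \td(V)$ is unconditional here — it follows either from Theorem \ref{regbound} applied to $V$ (whose right-hand side we have just computed to equal $\td(V)$) or from the previously cited bound $\reg(V_T) \leqslant \td(V_T)$ for torsion modules — so the only part of the statement that actually uses the conjecture is the reverse inequality $\reg(V) \geqslant \td(V)$. In that sense the entire difficulty of this corollary is precisely the difficulty of the conjecture, which the excerpt has already flagged as resistant to the usual induction-on-projective-dimension arguments because of Theorem \ref{homacyclic}.
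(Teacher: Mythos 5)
Your proof is correct and takes essentially the same route as the paper, which simply invokes Conjecture \ref{mainconj} together with Corollary \ref{toracyclic}; you have just spelled out the intermediate steps (that a nonzero torsion module is not $\sharp$-filtered, that $H^0_\mi(V)=V$, and that the higher local cohomology vanishes). The concluding remark about the unconditional inequality $\reg(V)\leqslant\td(V)$ is a nice observation but is not needed for the corollary.
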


\begin{proof}
This follows immediately from Conjecture \ref{mainconj} and Corollary \ref{toracyclic}.\\
\end{proof}

What is perhaps more interesting, is what the conjecture implies about the relationship between regularity and the shift functor. To see this, first note that for all $i \geqslant 0$, $H^i_\mi(\Sigma V) \cong \Sigma H^i_\mi(V)$. Indeed, this follows from Theorem \ref{coequiv}, as well as how the complex $\filcom V$ was constructed. We therefore conclude the following.\\

\begin{corollary}
Let $V$ be a finitely generated $\C$-module such that $\Sigma V$ is not $\sharp$-filtered, and assume Conjecture \ref{mainconj}. Then
\[
\reg(\Sigma V) = \reg(V) - 1
\]
\end{corollary}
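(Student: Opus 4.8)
The plan is to combine Conjecture \ref{mainconj} for both $V$ and $\Sigma V$ with the compatibility of local cohomology with the shift functor, which was already established in the remark preceding this corollary. First I would record the identity $H^i_\mi(\Sigma V) \cong \Sigma H^i_\mi(V)$ for all $i \geqslant 0$; this is exactly the observation made just before the statement, and it follows from Theorem \ref{coequiv} together with the fact that the complex $\filcom(\Sigma V)$ can be taken to be $\Sigma$ applied to $\filcom V$ (the shift functor is exact and preserves $\sharp$-filtered modules by Proposition \ref{genetic}, and the maps $\tau_b$ commute with $\Sigma$).

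Next I would compute how the torsion degree of a finitely generated torsion module changes under $\Sigma$. The key elementary fact is that if $T$ is a finitely generated torsion (equivalently, finitely supported) module with $\td(T) = d$, then $\td(\Sigma T) = d - 1$ whenever $T \neq 0$, since $(\Sigma T)_n = T_{n+1}$ and $\Sigma T$ is again finitely supported with support exactly $d-1$. Applying this to $T = H^i_\mi(V)$ (which is finitely generated and torsion by Theorem \ref{coequiv}) gives $\td(H^i_\mi(\Sigma V)) = \td(\Sigma H^i_\mi(V)) = \td(H^i_\mi(V)) - 1$ for every $i$ with $H^i_\mi(V) \neq 0$. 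One should note that $\Sigma V$ being not $\sharp$-filtered forces $V$ to be not $\sharp$-filtered either (since $\Sigma$ preserves $\sharp$-filtered modules), so at least one $H^i_\mi(V)$ is nonzero by Proposition \ref{exactfil}, and the same $i$ witnesses $H^i_\mi(\Sigma V) \neq 0$.

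Finally I would assemble the computation:
\begin{align*}
\reg(\Sigma V) &= \max\{\td(H^i_\mi(\Sigma V)) + i\} = \max\{\td(H^i_\mi(V)) - 1 + i\}\\
&= \max\{\td(H^i_\mi(V)) + i\} - 1 = \reg(V) - 1,
\end{align*}
where the first and last equalities are Conjecture \ref{mainconj} applied to $\Sigma V$ and $V$ respectively (both are not $\sharp$-filtered), and the middle steps use the torsion-degree shift from the previous paragraph, being careful that the maxima range over the same index set of $i$ for which the relevant cohomology is nonzero.

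I do not expect any serious obstacle here: the corollary is essentially a formal consequence of the conjecture plus the $\Sigma$-equivariance of local cohomology. The only point requiring a modicum of care is the bookkeeping with the indexing set in the $\max$ — one must ensure that $H^i_\mi(V) \neq 0$ if and only if $H^i_\mi(\Sigma V) \neq 0$, which follows immediately from $H^i_\mi(\Sigma V) \cong \Sigma H^i_\mi(V)$ and the fact that $\Sigma$ of a nonzero finitely supported module is nonzero. A secondary minor check is the edge case where some $H^i_\mi(V)$ has support $0$, so that $\Sigma H^i_\mi(V) = 0$; but in that case $\td = 0$ contributes $i$ to the max for $V$ and nothing for $\Sigma V$, and one checks this does not affect the overall maximum because the maximizing index is never one whose support drops out (or, more simply, one invokes that $\Sigma V$ not $\sharp$-filtered guarantees the max for $\Sigma V$ is attained at an index surviving the shift).
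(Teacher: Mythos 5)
Your proof is essentially the same as the paper's: both rely on the $\Sigma$-equivariance of local cohomology, $H^i_\mi(\Sigma V) \cong \Sigma H^i_\mi(V)$, together with Conjecture \ref{mainconj} applied to both $V$ and $\Sigma V$, to obtain $\reg(\Sigma V) = \max\{\td(\Sigma H^i_\mi(V)) + i\} = \max\{\td(H^i_\mi(V)) + i\} - 1 = \reg(V) - 1$. The edge case you flag --- where some $H^i_\mi(V)$ is concentrated in degree $0$, so $\td(\Sigma H^i_\mi(V)) = -\infty$ rather than $\td(H^i_\mi(V)) - 1$, and the index could in principle drop out of the maximum --- is a genuine subtlety that the paper's one-line proof silently assumes away; your proposed resolutions are somewhat loose (the assertion that the maximizing index never has torsion degree $0$ is not justified, and ``$\Sigma V$ not $\sharp$-filtered'' only guarantees that \emph{some} index survives the shift, not that the maximizing one does), but you are at least more attentive to the issue than the published argument, which performs the same substitution without comment.
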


\begin{proof}
Assuming the conjecture we have,
\[
\reg(\Sigma V) = \max\{\td(H^i_\mi(\Sigma V)) + i\} = \max\{\td(\Sigma H^i_\mi(V)) + i\} = \max\{\td(H^i_\mi(V)) + i\} -1 = \reg(V) - 1.
\]
\end{proof}

\end{document}